\newcommand\calC{{\mathcal C}}
\newcommand\calK{{\cal K}}
\newcommand\calN{{\cal N}}
\newcommand\calA{{\mathcal A}}
\newcommand\calD{{\mathcal D}}
\newcommand\calE{{\mathcal E}}
\newcommand\calQ{{\mathcal Q}}
\newcommand{\half}{{\frac{1}{2}}}
\newcommand{\Om}{\Omega}
\newcommand{\eps}{\varepsilon}
\newcommand{\ka}{\kappa}
\newcommand{\al}{\alpha}
\newcommand{\be}{\beta}
\newcommand{\ga}{\gamma}
\newcommand{\Lm}{L_{max}}
\renewcommand{\div}{\nabla \cdot}
\newcommand{\cd}{\cdot}
\newcommand{\nb}{\nabla}
\renewcommand{\l}{\ell}
\renewcommand{\k}{{(k)}}
\newcommand{\m}{{(m)}}
\renewcommand{\l}{{\ell}}
\newcommand{\MSE}{\text{MSE}}
\newcommand{\Cov}{\text{Cov}}
\newcommand{\Exp}{\text{Exp}}
\newcommand{\MLMC}{\text{MLMC}}
\newcommand{\CLMC}{\text{CLMC}}
\newcommand{\QCLMC}{\text{QCLMC}}
\newcommand{\dl}{\textup{ d}\ell}
\newcommand{\dQ}{\textup{ d}Q}
\newcommand{\dQl}{\frac{\textup{d}Q}{\dl}}
\newcommand{\dQll}{\frac{\textup{d}Q (\l)}{\dl}}
\newcommand{\inv}{^{-1}}
\def \to {\rightarrow}
\newcommand{\E}{{\mathbb E}}
\newcommand{\N}{{\mathbb N}}
\newcommand{\R}{{\mathbb R}}
\newcommand{\V}{{\mathbb V}}
\renewcommand{\P}{{\mathbb P}}
\newcommand\quadand{\quad\hbox{ and }\quad}
\newcommand\quadfor{\quad\hbox{ for }\quad}
\newtheorem{theorem}{Theorem}
\newtheorem{lemma}[theorem]{Lemma}
\newtheorem{proposition}[theorem]{Proposition}
\newtheorem{remark}[theorem]{Remark}
\numberwithin{equation}{section}
\numberwithin{figure}{section}
\numberwithin{table}{section}
\numberwithin{theorem}{section}
\begin{document}

\pagestyle{fancy}
% \fancyhf{}
\fancyhead[L]{Complexity analysis of quasi continuous level Monte Carlo}
\fancyhead[R]{C. A. Beschle and A. Barth}
% \fancyfoot[LE,RO]{\hfill\thepage\hfill}

\title{Complexity analysis of quasi continuous level Monte Carlo}
% Use \titlerunning{Short Title} for an abbreviated version of
% your contribution title if the original one is too long
\author[$\dagger$]{Cedric Aaron Beschle}
\author[$\star$]{Andrea Barth}
% Use \authorrunning{Short Title} for an abbreviated version of
% your contribution title if the original one is too long
% \affil{Institute of Applied Analysis and Numerical Simulation (IANS), University of Stuttgart, Allmandring 5b, 70569 Stuttgart}

\affil[$\dagger$]{cedric.beschle@ians.uni-stuttgart.de}
\affil[$\star$]{andrea.barth@ians.uni-stuttgart.de}

\date{\today}

\maketitle

\begin{abstract}
    Continuous level Monte Carlo is an unbiased, continuous version of the celebrated multilevel Monte Carlo method. The approximation level is assumed to be continuous resulting in a stochastic process describing the quantity of interest. Continuous level Monte Carlo methods allow naturally for samplewise adaptive mesh refinements, which are indicated by goal-oriented error estimators. The samplewise refinement levels are drawn in the estimator from an exponentially-distributed random variable. Unfortunately in practical examples this results in higher costs due to high variance in the samples. In this paper we propose a variant of continuous level Monte Carlo, where a quasi Monte Carlo sequence is utilized to ``sample'' the exponential random variable. We provide a complexity theorem for this novel estimator and show that this results theoretically and practically in a variance reduction of the whole estimator.  
\end{abstract}

\maketitle
%%-----------------------------
%%      your text
%%-----------------------------

\section*{Introduction}
During the last decade multilevel Monte Carlo methods and its variants as multiindex and multifidelity Monte Carlo have been successfully applied to reduce the costs of solving various uncertain problems (see e.g.~\cite{Heinrich2001_MLMC, Giles2008_MLMCPath, Cliffe2011_MLMCPDE, Assyr2013_MLMC, Peherstorfer2016_MFMC, Tempone2016_MIMC}). The multilevel Monte Carlo estimator combines discretizations of a quantity of interest on a hierarchy of refinements in such a way that many samples on coarse refinement levels are combined with few samples on fine discretization levels to reduce the variance of the estimator when compared to a naive Monte Carlo approach. The multilevel Monte Carlo estimator is asymptotically optimal and reduces the costs to compute the quantity of interest considerably. This reduction comes at a cost: The quantity of interest has to be available on the hierarchy of discretizations and the variance of the details (difference of subsequent discretizations) has to decrease faster than the costs increase. Further, the weak and the strong error have to fulfill a certain ratio for the optimal cost reduction (see~\cite{Giles2008_MLMCPath} for a detailed complexity theorem). However, in all cases the multilevel Monte Carlo estimator is biased (as is the singlelevel Monte Carlo estimator). 

An unbiased variant was introduced in~\cite{Detommaso2019_CLMC} with the continuous level Monte Carlo method. Here, the resolution levels are assumed to be continuous, resulting in a stochastic process describing the family of approximations of the quantity of interest. The refinement of each sample is determined by a (level) random variable. The samples are adaptively refined according to an a-posteriori error estimator. In practical terms the tail estimate of this level random variable is a crucial component in the performance of the estimator. In~\cite{BeschleBarth2023_QCLMC} a continuous level estimator was used to solve an elliptic problem with a discontinuous random coefficient and furthermore, the performance of the continuous level estimator was compared to its multilevel variant. The problem was chosen such that the continuous level estimator should have outperformed the multilevel method. Unfortunately the performance of the continuous level estimator is very sensitive to the tail estimate provided by the samples of the level random variable. To reduce this sensitivity the authors propose to use a quasi Monte Carlo sequence instead of i.i.d. samples of the level random variable to reduce the variance of the tail estimate and the whole estimator significantly. A similar idea was mentioned in a remark in \cite{Vihola2018_unbiasedMLMC} in the general framework of unbiased MLMC estimators. The so called quasi continuous level Monte Carlo estimator outperformed not only the continuous level but also the multilevel Monte Carlo estimator. 

In this paper we investigate theoretically the quasi continuous level Monte Carlo method. We provide a complexity theorem which shows that the quasi continuous level Monte Carlo method has the same optimal complexity as the continuous level and multilevel estimator, but with a potentially lower variance and therefore an overall improved time to error performance as demonstrated by numerical experiments, where we apply both methods --- continuous level Monte Carlo and quasi continuous level Monte Carlo --- to an elliptic PDE problem with a log-Gauss random coefficient and compare their performance in different hyperparameter settings. As we only exchange the one-dimensional random variable with a quasi Monte Carlo sequence the cost increase is negligible. The quasi Monte Carlo sequence provides a more accurate tail estimate than pseudo-random numbers for the level random variable. We emphasise that we are not proposing a quasi (multilevel) Monte Carlo method to solve the uncertain PDE (as e.g. in~\cite{Graham2011_QMC, Kuo2012_QMC, Kuo2015_MLQMC, Graham2015_QMClognormal}). The performance of those methods depends on the dimensionality of the problem, the proposed quasi continuous level Monte Carlo method is robust in this aspect.

The paper is organized as follows: In Section~\ref{sec:CLMC} we briefly recap the continuous level Monte Carlo method and its complexity. We introduce quasi-random sequences and the notion of $F$-discrepancy in Section~\ref{sec:Quasi-random sequences and F-discrepancy}. We use the $F$-discrepancy in Section~\ref{sec:QCLMC} to prove a complexity theorem for the quasi continuous level Monte Carlo method. In Section~\ref{sec:numerical_experiments} we introduce the classical two-dimensional random elliptic PDE model we use as a test case. The coefficient is given by a log-Gauss random field and for the $H^1$-norm as the quantity of interest we use a standard a-posteriori error estimator. For the performance comparison we estimate all parameters which are involved in the assumptions of the complexity theorem and compare the performances of the methods for different choices of the hyperparameters of the log-Gauss field  based on respective theoretical upper bounds to the mean squared error of the methods stemming from the proofs of the complexity theorems and in a time to error performance over several simulation runs via a proposed algorithm. 

\section{Continuous level Monte Carlo}
\label{sec:CLMC}
We consider a stochastic model and denote by $\calQ$ a real-valued quantity of interest of its solution. Denote by $Q_L$ an approximation of $\calQ$ by a discretization-based numerical scheme, for example a finite element method, to some resolution parameter $L \in \N_0$, e.g., corresponding to the degrees of freedom (DOF) of a mesh. By assuming that $\E[Q_L] \to \E[\calQ]$ for $L \to \infty$ $\mathbb{P}\textup{-almost surely}$, we are able to compute estimates $\widehat{Q}_L^{est}$ for the mean value $\E[\calQ]$ up to some desired accuracy with an average of independent approximation samples $(Q_L^\k; k=1,\dots,M)$ for $M \in \N$. 
%
% The mean squared error (MSE) of an estimator $\widehat{Q}_L^{est}$ for the mean value $\E[\calQ - Q_0]$ together with its decomposition into a variance and squared bias term is given by
% %
% \begin{equation}
% \begin{aligned}
% \MSE = \E\left[(\widehat{Q}_L^{est} - \E[\calQ - Q_0])^2 \right] = \V[\widehat{Q}_L^{est}] + \left(\E[\widehat{Q}_L^{est} - (\calQ - Q_0)]\right)^2.
%  \end{aligned} 
%  \label{eq:MSE}
% \end{equation}
The mean squared error (MSE) of an estimator $\widehat{Q}_L^{est}$ for the mean value $\E[\calQ]$ together with its decomposition into a variance and squared bias term is given by
\begin{equation}
\begin{aligned}
\MSE = \E\left[(\widehat{Q}_L^{est} - \E[\calQ])^2 \right] = \V[\widehat{Q}_L^{est}] + \left(\E[\widehat{Q}_L^{est} - \calQ]\right)^2.
 \end{aligned} 
 \label{eq:MSE}
\end{equation}

In this work we investigate the recently developed quasi continuous level Monte Carlo method (QCLMC) \cite{BeschleBarth2023_QCLMC}, which is an improved version of the continuous level Monte Carlo method developed in \cite{Detommaso2019_CLMC}.
The continuous level Monte Carlo (CLMC) method estimates the mean value of a quantity of interest with samplewise adaptive mesh hierarchies. This is realized by assuming a continuous resolution (level) $\l \in \R>0$  and a continuous family of approximations $(Q(\l);\l \geq 0)$ of $\calQ$ viewed as a stochastic process defined on a probability space $(\Omega, \calA, \P)$ with $\E\left[\left|\frac{\text{d}Q}{\dl}\right|\right] \in L^1((0,\infty);\R)$, such that $Q(\l) \to \calQ$ as $\l \to \infty$ $\P\textup{-almost surely}$.
With these considerations, the CLMC estimator is defined by
\begin{equation}
    \widehat{Q}_{0,\Lm}^\CLMC := \frac{1}{M} \sum_{k=1}^M \int_0^{\Lm} \frac{1}{\P(L_{r} \geq \l)} \left(\dQl\right)^\k (\l) \mathds{1}_{[0,L_{r}^\k]}(\l) \dl,
\label{eq:CLMC}
\end{equation}
with deterministic maximal level $\Lm \in (0,+\infty]$, total sample number $M \in \N$ and a random variable $L_{r}$ with finite expectation and $\P(L_r \geq \l) > 0$ for all $\l \in (0, \Lm)$, that is independent of the stochastic process $(Q(\l);\l \geq 0)$. For each sample $k=1,\dots,M$, the minimum of an i.i.d. copy $L_{r}^\k$ of $L_r$ and the predetermined $\Lm$ corresponds to the maximal computed resolution for this sample. 
Note, that the CLMC estimator \eqref{eq:CLMC} is defined as an estimator for the difference quantity $\E[\calQ - Q(0)]$, thus the indexing by ${0, \Lm}$, and in order to obtain an estimator for $\E[\calQ]$ it suffices to add an unbiased Monte Carlo estimator for $\E[Q(0)]$. 
The CLMC method is an unbiased estimator for $\E[Q(\Lm) - Q(0)]$ and in the case $\Lm = \infty$, it is an unbiased estimator for $\E[\calQ - Q(0)]$, i.e., $\E[\widehat{Q}_{0, \infty}^\CLMC] = \E[\calQ - Q(0)]$ and thus, its MSE expansion \eqref{eq:MSE} reduces to $\MSE^\CLMC = \V[\widehat{Q}_{\infty}^\CLMC]$.
As stated in \cite{Detommaso2019_CLMC}, under the assumption that there exist positive constants $\al$, $\be$, $\ga$, $c_1$, $c_2$, $c_3$ such that for any $\l > 0$ we have
\begin{equation}
   \E \left[\dQll\right] \leq c_1 e^{-\al \l},
  \quad
    \V \left[\dQll\right] \leq c_2 e^{-\be \l}, \quad
    \frac{\text{d} \calC[\l]}{\dl} \leq c_3 e^{\ga \l},
      \label{eq:CLMCassumptions}
\end{equation}   
where $\calC[\l]$ is the total accumulated cost to compute a sample of $Q(\l)$ (cf.~Remark \ref{rem:adapted_cost_growth_assumption}) and that $L_r \sim \Exp(r)$ is exponentially distributed with $r \in [\min\{2\al,\be, \ga\},\;\max\{\min\{\be,2\al\},\ga\}]$, then for any $\eps\in(0,\frac{1}{e})$ there exist and $M \in \N$ and $\widetilde{C} > 0$ such that 
\begin{equation}
    \MSE^\CLMC \leq \eps^2 \quadand \calC\left[\widehat{Q}_{0, \Lm}^\CLMC\right] \leq \widetilde{C} \eps^{-2-\max\{0, \frac{\ga - \min\{\be,2\al\}}{\al}}\} |\log(\eps)|^{\delta_{r,\be} + \delta_{r, 2\al} + \delta_{r,\ga}}.
    \label{eq:CLMC-complexity}
\end{equation}
In the case $\min\{\be,2\al\} > \ga$ and $r \in (\ga,\;\min\{\be,2\al\})$ with $\Lm = \infty$ this reduces to
\begin{equation*}
    \MSE^\CLMC \leq \eps^2 \quadand \calC[\widehat{Q}_{0, \infty}^\CLMC] \leq \widetilde{C} \eps^{-2}.
\end{equation*}

Next, we explain how CLMC generalizes MLMC. For any sample $1 \leq k \leq M$, suppose that $(Q_j^\k; j \geq 1)$ denotes a countable sequence of approximations of $Q^\k$ at levels $(\l_j^\k; j \geq 1)$. We choose a linear interpolation for the derivative samples
\begin{equation}
 \left(\dQl\right)^\k (\l) := \frac{Q_j^\k - Q_{j-1}^\k}{\l_{j}^\k - \l_{j-1}^\k} \quad\hbox{ for }\quad \in (\l_{j-1}^\k ,\l_{j}^\k],
 \label{eq:linear_interpolation}
\end{equation}
which yields the estimator
\begin{equation}
    \widehat{Q}_{0,\Lm}^\CLMC = \frac{1}{M} \sum_{k=1}^M \sum_{j=1}^{J^\k} \int_{\l_{j-1}^\k}^{\tilde{\l_j}^\k} \frac{1}{\P(L \geq \l)} \dl \; \frac{Q_j^\k - Q_{j-1}^\k}{\l_{j}^\k - \l_{j-1}^\k},
 \label{eq:CLMC_discretized}
\end{equation}
with
\begin{equation*}
    J^\k := \min\{j \geq 1: \ell_j^\k \geq L_r^\k \land \Lm\}, \quad \tilde{\ell}_j^\k := \ell_j^\k \land L_r^\k \land \Lm.
    \label{eq:CLMC-max-level-index}
\end{equation*}
Setting $\l_j^\k = j$ for $j \in \N$ and all $k=1,\dots,M$ to restrict the estimator to the integer level framework and choosing $\P(L_r \geq j)$ to be a discrete distribution over the levels, that is constant over $(j-1, j)$, reduces the CLMC estimator $\eqref{eq:CLMC_discretized}$ to, cf.~\cite{Detommaso2019_CLMC},
\begin{equation*}
    \widehat{Q}_{0,\Lm}^\CLMC = \frac{1}{M} \sum_{k=1}^M \sum_{j=1}^{J^\k} \frac{1}{\P(L_r \geq j)}\; \left(Q_j^\k - Q_{j-1}^\k\right).
 \label{eq:CLMC_uniform}
\end{equation*}
This is directly connected to the (unbiased) estimator by setting $M_j := M\P(L \geq j)$, introduced by Rhee and Glynn in \cite{Rhee2015_unbiasedSDE},
\begin{equation*}
 \widehat{Q}_{0,\Lm}^\CLMC = \sum_{k=1}^M \sum_{j=1}^{J^\k} \frac{1}{M_j} \; \left(Q_j^\k - Q_{j-1}^\k\right).
\end{equation*}
Furthermore, with $0 < \Lm \in \N$, this may be expressed as, cf.~\cite{Giles2015_MLMC},
\begin{equation*}
 \widehat{Q}_{0,\Lm}^\MLMC := \sum_{j=1}^{\Lm} \frac{1}{M_j} \sum_{k=1}^{M_j} \left(Q_j^\k - Q_{j-1}^\k\right),
\end{equation*}
which is the formula of the standard multilevel Monte Carlo estimator, where $\Lm$ is the maximal level and the sample numbers $M_j$ on each level $j=1,..,\Lm$ are not probabilistic, but deterministic.

\begin{remark}
\label{rem:choice_L_r}

 As demonstrated in the derivation of the MLMC estimator, the level random variable $L_r$ is not restricted to the exponential distribution, but merely has to be independent of the stochastic process $(Q(\l);\l \geq 0)$ and have finite expectation with $\P(L_r \geq \l) > 0$ for all $\l \in (0, \Lm)$. 

\end{remark}

\begin{remark}
 \label{rem:adapted_cost_growth_assumption}

 The parameters $c_1, \al$, $c_2, \be$ and $c_3, \ga$ from \eqref{eq:CLMCassumptions} depend not only on the approximations $Q_j^\k = Q(\l_j^\k)$ at refinement $j \in \N$ for $k=1,...,M$ but also on the definition of the derivative $\dQl^\k$ and the samplewise level $\l_j^\k$ in a practical setting. 
 The assumption on the bias and variance decay of the derivative quantity $\dQl$ scales with the change in the level $\dl$ in a way, that changes the constants $c_1$ and $c_2$ correspondingly. This is reflected in the assumption for the cost growth in \eqref{eq:CLMCassumptions}, which is an assumption for the rate of change of the total cost $\text{d}\calC[\l]$ to compute an approximation of $Q(\l)$, with respect to the change in the level $\dl$. The assumption on the cost growth, $\calC[\l] \leq c_3 e^{\ga \l}$, where $\calC[\l]$ is the cost to compute one sample of $Q(\l)$, as in \cite[Theorem~$2.3$]{Detommaso2019_CLMC} does not scale $c_3$ accordingly and allows to construct a practical estimator of CLMC with a specific definition for the samplewise level $\l_j^\k$ at refinement $j \in \N$ for $k=1,...,M$ such that the theoretical upper bound to the estimator's cost in the CLMC complexity theorem becomes arbitrarily small.
\end{remark}

The key difference between the CLMC estimator and the QCLMC estimator is the choice of how to compute $L_r^\k$. While in CLMC $L_r^\k$ are i.i.d. copies of the random variable $L_r$ for $k=1,\dots,M$, in QCLMC we choose $L_r^\k$ to be a deterministic quasi-random sequence for $k=1,\dots,M$ yielding a better approximation of the underlying tail distribution via their improved $F$-discrepancy convergence. These concepts are introduced in the next section and the improvement in the approximation is demonstrated.

\section{Quasi-random sequences and $F$-discrepancy}
\label{sec:Quasi-random sequences and F-discrepancy}
The discrepancy of a set of points $P$ consisting of $x^{(1)},\dots,x^{(M)} \in [0,1)^s$ for $M \in \N$ and $s \in \N$ is given by
\begin{equation}
 D_M(\mathcal{B}; P) = \sup_{B \in \mathcal{B}} \left|\frac{1}{M} \sum_{k=1}^M \mathds{1}_{B}(x^\k) - \lambda(B) \right|,
 \label{eq:discr}
\end{equation}
cf.~, e.g., \cite{Niederreiter1992_QMC}, where $\lambda$ is the Lebesgue measure and $\mathcal{B}$ a non-empty family of Lebesgue-measurable subsets of $[0,1)^s$. For simplicity and since it fits our considerations we assume $s=1$. 
Quasi-random sequences are numbers $x^{(1)},\dots,x^{(M)} \in [0,1)$ specifically designed such that the discrepancy converges to zero at a much faster rate than for pseudo-random number sequences, i.e.,
\begin{equation}
  \begin{aligned}
  %  &\lim_{M\to\infty} D_M M^{1-\ka} = 0 \text{ for any } \ka \geq 0, \\
  &D_M(\mathcal{B}; P) \leq c_{disc} M^{\ka-1} \text{ for some } \ka \geq 0, c_{disc}>0 \text{ independent of $M \in$.}
  \end{aligned}
\label{eq:discr_conv}
\end{equation}
This is no probability convergence statement, because quasi-random numbers are essentially deterministic.
The specific choice of $\mathcal{B}$ in the discrepancy definition \eqref{eq:discr} as the family of all subintervals $[0,x] \subset [0,1)$, where $x \in (0,1)$, leads to the star-discrepancy
\begin{equation*}
 D_M^\ast(P) = \sup_{[0,x] \subset [0,1)} \left|\frac{1}{M} \sum_{k=1}^M \mathds{1}_{[0,x]}(x^\k) - x \right| = \sup_{x \in (0,1)} \left|\frac{1}{M} \sum_{k=1}^M \mathds{1}_{[0,x]}(x^\k) - x \right|.
 \label{eq:star_discrepancy}
\end{equation*}
For a cumulative distribution function (CDF) $F: \R \to [0,1]$, the empirical CDF of $M$ samples $\tilde{x}^{(1)},...,\tilde{x}^{(M)} \in \R$ is given by
\begin{equation*}
 F_M(x) := \frac{1}{M} \sum_{k=1}^M \mathds{1}_{\{\tilde{x}^\k \leq x\}} = \frac{1}{M} \sum_{k=1}^M \mathds{1}_{[\tilde{x}^\k,\infty)}(x) = \frac{1}{M} \sum_{k=1}^M \mathds{1}_{(-\infty,x]}(\tilde{x}^\k).
\end{equation*}
The $F$-discrepancy of $P$ is defined by, cf.~\cite{Fang1994_NumberTheoretic}, 
\begin{equation*}
 D_{F,P} = \sup_{x \in \R} |F_M(x) - F(x)|.
\end{equation*}
Considering the uniform distribution on $[0,1]$ with CDF
\begin{equation*}
 F_U(x) = \begin{cases}
         &0 \quadfor x < 0, \\
         &x  \quadfor 0 \leq x \leq 1, \\
         &1 \quadfor x > 1,
        \end{cases}
\end{equation*}
we observe that it holds
\begin{equation*}
  D_{F_U,P} = D_M^\ast(P).
\end{equation*}
Now, let $F_Y: \R \to [0,1]$, be a continuous distribution function to a random variable $Y$, where the inverse $F_Y^{-1}$ exists, is non-decreasing and continuous as well. Let $P_Y:=\{y^\k; k=1,\dots,M\}$ be a sequence of points obtained through inverse sampling of quasi-random numbers $x^\k$ via the inverse CDF $F_Y^{-1}$, i.e. $y^\k = F_Y^{-1}(x^\k)$ for all $k=1,\dots,M$. Further, assume that there exists $x \in [0,1]$ such that $F_Y(y) = x$ and $F_Y^{-1}(x) = y$ for every $y \in \R$. With these assumptions we compute
\begin{equation*}
 \begin{aligned}
    D_{F_Y,P_Y} & \ = \sup_{y \in \R} \left|F_{M,Y}(y) - F_Y(y)\right| = \sup_{y \in \R} \left|\frac{1}{M} \sum_{k=1}^M \mathds{1}_{\{y^\k \leq y\}} - F_Y(y)\right|\\
    & \ = \sup_{y \in \R} \left|\frac{1}{M} \sum_{k=1}^M \mathds{1}_{\{F_Y^{-1}(x^\k) \leq y\}} - F_Y(y)\right| = \sup_{x \in (0,1)} \left|\frac{1}{M} \sum_{k=1}^M \mathds{1}_{\{F_Y^{-1}(x^\k) \leq F_Y^{-1}(x)\}} - x\right| \\
    & \ = \sup_{x \in (0,1)} \left|\frac{1}{M} \sum_{k=1}^M \mathds{1}_{\{x^\k \leq x\}} - x\right|  =  D_{F_U,P} = D_M^\ast(P).
 \end{aligned}
\end{equation*}
Thus, the $F$-discrepancy for a continuous random variable with continuous inverse is equal to the star discrepancy, cf.~\cite{Fang1994_NumberTheoretic}.
The same holds true when considering the $F$-discrepancy of the tail distribution function $T_Y(y) = 1 - F_Y(y)$, because for the empirical tail distributions we have
\begin{equation*}
  \frac{1}{M} \sum_{k=1}^M \mathds{1}_{[-\infty,y^\k]}(y) = T_{M, Y}(y) = 1 - F_{M, Y}(y).
\end{equation*}
This leads to the following result, which is essential for the upcoming complexity analysis of the QCLMC method.
\begin{lemma}
\label{lma:inverse_transform_and_convergence}
 Let $(\Om, \calA, \P)$ be a complete probability space and $Y: \Om \to \R$ a real-valued random variable with continuous distribution function $F_Y$ and a continuous inverse distribution function $F_Y\inv$. For the distribution function it holds $F_Y(y) = \P(Y \leq y)$ and for the tail distribution function $T_Y(y) = 1 - F_Y(y) = \P(Y \geq y)$. Let $y^\k$ be a sequence generated via the inverse transformation
 \begin{equation*}
  y^\k := F_Y\inv(x^\k),
 \end{equation*}
 from a sequence $x^\k$ distributed in $[0,1)$ for $k=1,\dots,M$ and $M \in \N$. Then, the following convergence result for estimating the tail distribution $T_Y$ of the random variable $Y$ via the empirical tail distribution $T_{M,Y}$ holds:
 \begin{equation}
   \sup_{y \in \R} \left|\frac{1}{M} \sum_{k=1}^M \mathds{1}_{[-\infty, y^\k]}(y) - \P(Y \geq y) \right| \leq c_{disc} M^{\ka - 1}, 
   \label{eq:F_discrepancy_property}
 \end{equation}
for some $\ka \geq 0$ and $c_{disc} > 0$ independent of $M$. 
\end{lemma}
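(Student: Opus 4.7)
The plan is to reduce the tail-distribution supremum on the left-hand side of \eqref{eq:F_discrepancy_property} to the star-discrepancy of the quasi-random sequence $(x^\k)_{k=1}^M$ and then invoke the quasi-random convergence estimate \eqref{eq:discr_conv}. The whole argument is essentially laid out in the computation that precedes the lemma in Section~\ref{sec:Quasi-random sequences and F-discrepancy}; I would package it as follows.

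First I would rewrite the empirical tail in terms of the empirical cumulative distribution. By construction $\mathds{1}_{[-\infty, y^\k]}(y) = \mathds{1}_{\{y \leq y^\k\}} = \mathds{1}_{\{y^\k \geq y\}}$, so the inner average on the left equals the empirical tail $T_{M,Y}(y) = 1 - F_{M,Y}(y)$, while by assumption $T_Y(y) = 1 - F_Y(y)$. Hence
\[
\sup_{y \in \R} \left|\frac{1}{M} \sum_{k=1}^M \mathds{1}_{[-\infty,y^\k]}(y) - \P(Y \geq y)\right| \;=\; \sup_{y \in \R} |F_{M,Y}(y) - F_Y(y)| \;=\; D_{F_Y, P_Y}.
\]

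Next I would justify the identity $D_{F_Y, P_Y} = D_M^\ast(P)$. Since $F_Y$ is continuous with continuous, non-decreasing inverse $F_Y\inv$, the map $x \mapsto F_Y\inv(x)$ is a homeomorphism from $(0,1)$ onto the essential range of $Y$, and for $y$ outside that range both $F_{M,Y}(y)$ and $F_Y(y)$ coincide (taking the common value $0$ or $1$). Substituting $y = F_Y\inv(x)$ and using $y^\k = F_Y\inv(x^\k)$ together with the monotonicity of $F_Y\inv$, the event $\{y^\k \leq y\}$ becomes $\{x^\k \leq x\}$, so the supremum reduces to
\[
\sup_{x \in (0,1)} \left|\frac{1}{M} \sum_{k=1}^M \mathds{1}_{\{x^\k \leq x\}} - x\right| \;=\; D_M^\ast(P),
\]
the star-discrepancy of the underlying quasi-random sequence. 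This is precisely the chain of equalities displayed just before the statement of the lemma; I would simply flag the tail-to-CDF reformulation as the only additional step. Finally, the bound \eqref{eq:discr_conv} applied to $D_M^\ast(P)$ yields the claimed estimate $c_{disc} M^{\ka - 1}$.

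The main obstacle is purely technical: making sure the substitution $y = F_Y\inv(x)$ preserves the supremum. One must verify that $y$-values outside the essential range of $Y$ contribute zero to the discrepancy (so taking the supremum over $(0,1)$ rather than over $\R$ loses nothing), and that the continuity of both $F_Y$ and $F_Y\inv$ rules out jumps at which the empirical and theoretical distributions could disagree in ways not captured on the image of $F_Y\inv$. The continuity hypotheses in the lemma are precisely what is needed for this.
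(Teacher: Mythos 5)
Your proof is correct and follows essentially the same route as the paper: the paper gives no formal proof environment for this lemma but derives exactly this chain of identities (the tail discrepancy equals the CDF-based $F$-discrepancy $D_{F_Y,P_Y}$, which equals the star discrepancy $D_M^\ast(P)$ via the monotone inverse transform) in the text immediately preceding the statement, and then invokes \eqref{eq:discr_conv}. Your explicit attention to the supremum/substitution technicality and the behaviour of $y$ outside the essential range is a welcome refinement of what the paper leaves implicit.
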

\begin{remark}
 \label{rem:F_discrepancy_exponential}

 The assumptions in Lemma \ref{lma:inverse_transform_and_convergence} hold for the specific case of an exponentially distributed random variable $Y \equiv L_r \sim \Exp(r)$ for some $r>0$. The distribution function is given by $\P(L_r \leq \l) = 1 - e^{-r\l}$ and the tail distribution function is $\P(L_r \geq \l) = e^{-r\l}$. The samples $(L_r^\k; k \in \N)$ are generated via the inverse transformation 
 \begin{equation}
  L_r^\k := - \frac{\ln(1 - x^\k)}{r}.
  \label{eq:exponential_inverse_transform}
 \end{equation}
 Thus, Lemma \ref{lma:inverse_transform_and_convergence} bounds the error of the tail estimate of an exponentially-distributed random variable, approximated by an empirical tail estimate. An illustration of this for an empirical tail estimate obtained by a quasi-random sequence compared to a pseudo-random sequence is given in Figure \ref{fig:distribution_convergence_discrepancy}.

\end{remark}

\begin{figure}
\includegraphics[width=0.49\textwidth]{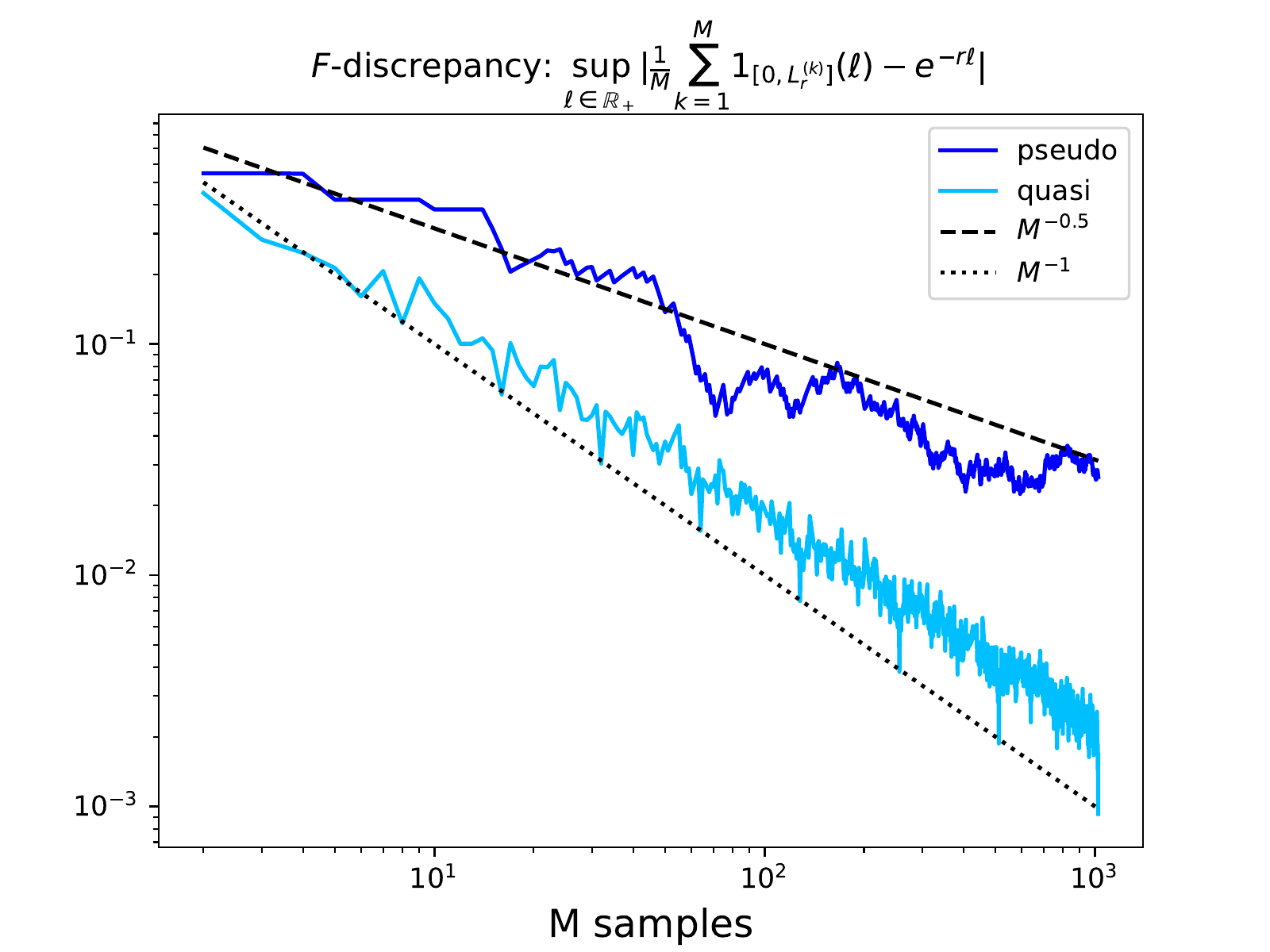}
\includegraphics[width=0.49\textwidth]{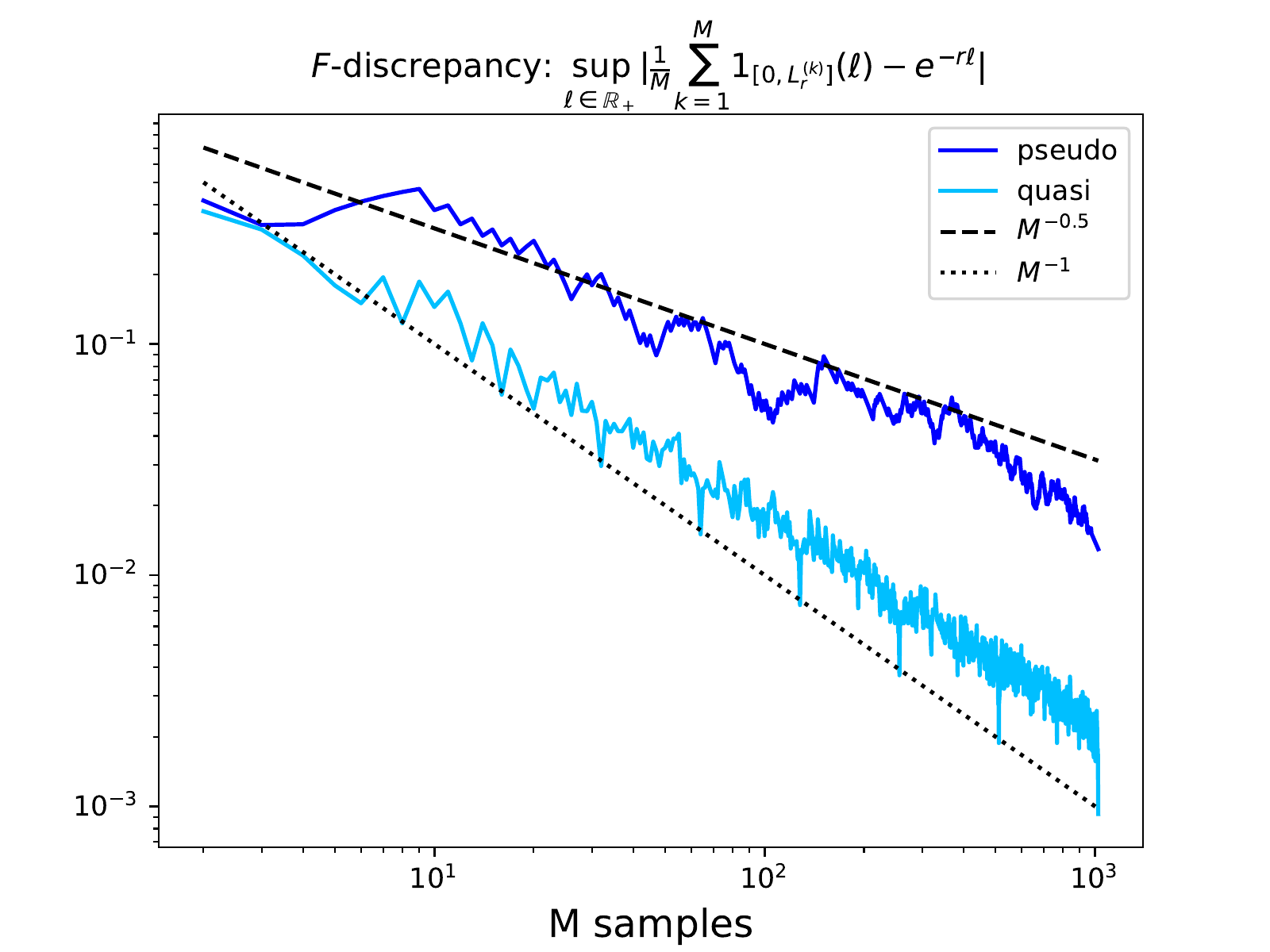}
\includegraphics[width=0.49\textwidth]{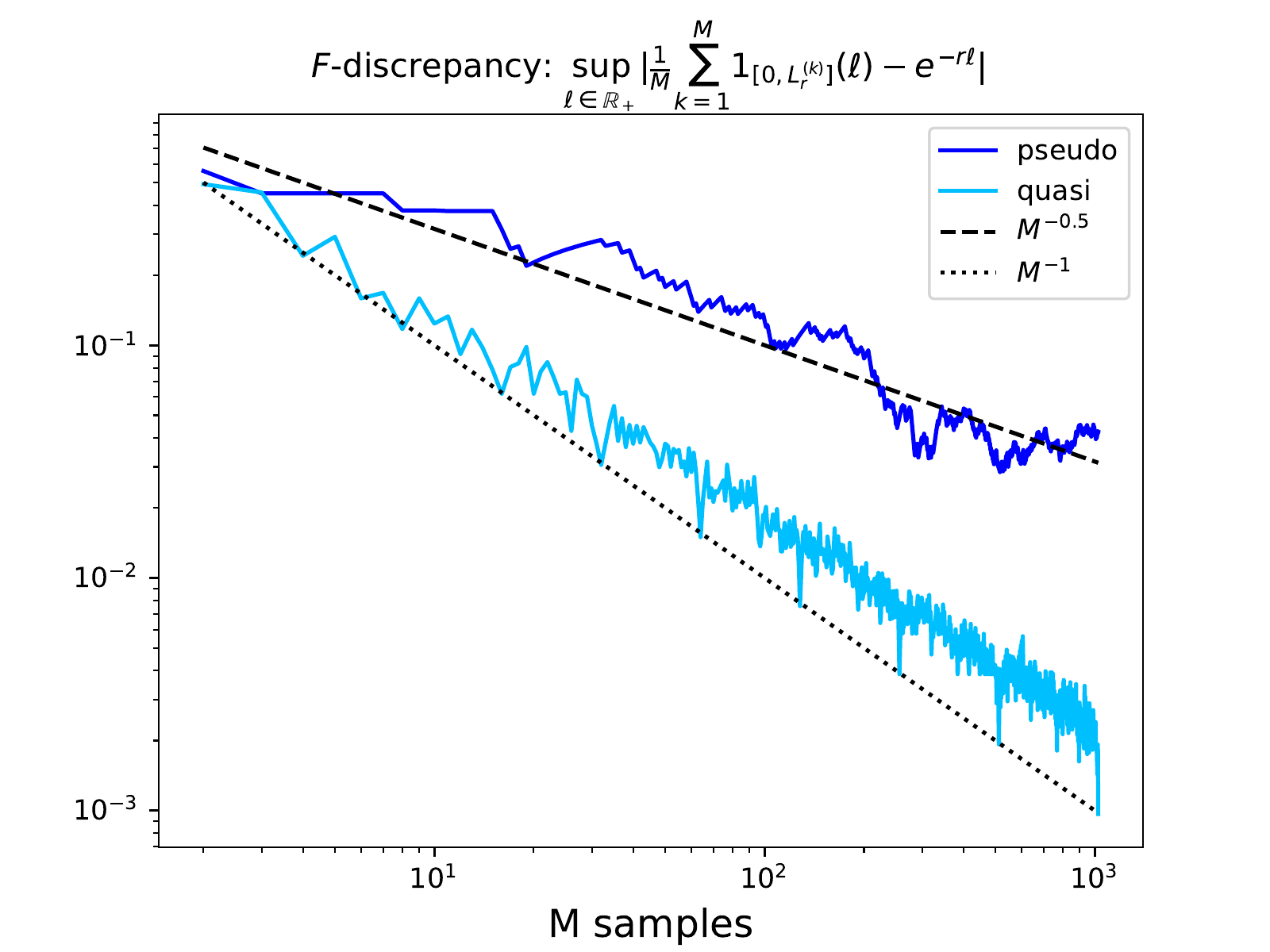}
\includegraphics[width=0.49\textwidth]{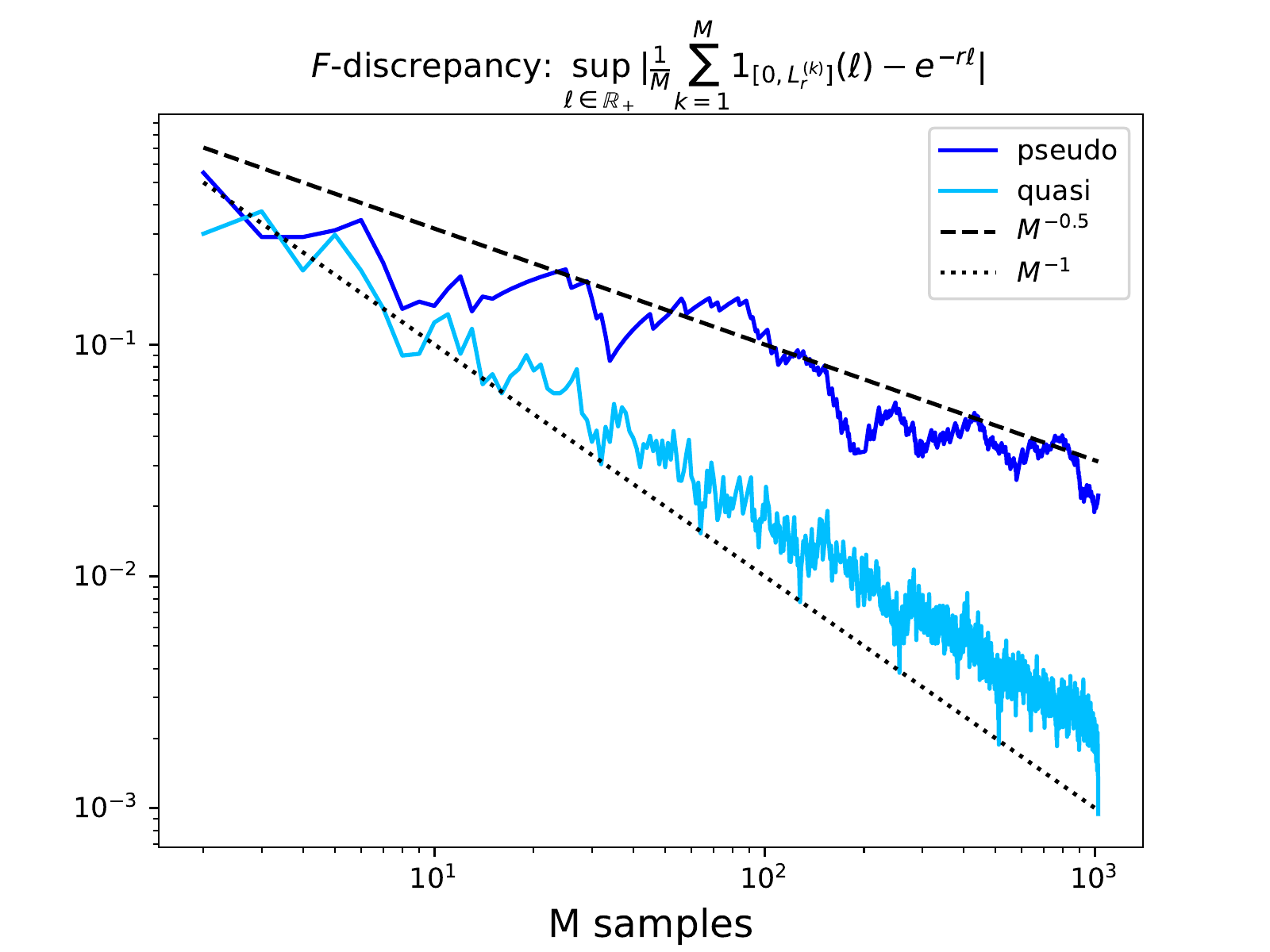}
 \caption{Demonstration of the convergence result of Lemma \ref{lma:inverse_transform_and_convergence} for $r = 1.3$ and four independent runs (different seeds) of quasi-random Sobol numbers \cite{Sobol1967_Sobol} with Owen scrambling, cf.~\cite{Owen1995_Scrambling, Owen1998_Scrambling} generated through the scipy library \cite{Scipy2020}, and pseudo-random numbers generated with the numpy library \cite{Numpy2020}. The quasi-random Sobol numbers have their optimal discrepancy for powers of two (location of downward spikes in the light blue lines), but we observe that for values in between powers of two, the discrepancy still converges with rate one, i.e. $\ka = 0$.}
 \label{fig:distribution_convergence_discrepancy}
\end{figure}

\begin{remark}
\label{rem:sequence_choices} 
  For the $F$-discrepancy result of Lemma \ref{lma:inverse_transform_and_convergence} the full rate of convergence with $\ka = 0$ is achieved in one dimension by Sobol sequences or Hammersley points, among others, cf.~\cite{Dick2014_discrepancy} for an overview. The sequence of numbers $(x^\k; k=1,\dots,M)$ is not restricted to quasi-random numbers in general. Any sequence from the interval $[0,1)$ may be used as long as the $F$-discrepancy convergence property with $\ka = 0$ is satisfied. A sequence of i.i.d. copies of a $[0,1]$-uniformly distributed random variable for $x^\k$ only yields $\ka = \frac{1}{2}$, cf. Figures~\ref{fig:distribution_convergence_discrepancy}.The grid points $x^\k = \frac{2k-1}{2M}$ for $k=1,\dots,M$ have $F$-discrepancy $\frac{1}{2} M^{-1}$, cf.~\cite[Corollary~$1.2$]{Kuipers1974_UniformDistribution} and \cite[Lemma~$1$]{Fang1994_NumberTheoretic}, which is the best achievable discrepancy in one dimension. However, this grid sequence is not nested and can not be reused for growing values of $M$. A great benefit of using quasi-random numbers is the possibility to sample one point after another in case the number of samples $M$ is not known a-priori, e.g., in an on-the-fly-type algorithm. 
 
  Another great benefit of using quasi-random numbers for the low-discrepancy sequence is the possibility of creating independent low-discrepancy sequences via randomization, e.g., for Sobol numbers via Owen scrambling \cite{Owen1995_Scrambling, Owen1998_Scrambling}, to obtain independent QCLMC estimators to estimate the MSE over several QCLMC runs as done in the numerical experiments in Section \ref{subsec:performance_comparison_of_clmc_and_qclmc}. 
  
  Furthermore, the continuous level framework may be extended to multiindex Monte Carlo (MIMC), cf.~\cite[Section $5$]{Detommaso2019_CLMC} for details. In this case, the level variable follows a multivariate probability distribution and quasi-random sequences with an optimal discrepancy property in higher dimensions are beneficial for such an extension of QCLMC. They obtain merely an additional logarithmic dependence on the dimension in the $F$-discrepancy convergence result. 
%   instead of an exponential dependence in the dimension such as a regular grid or standard quadrature rules. 
%
\end{remark}

\section{Quasi continuous level Monte Carlo method}
\label{sec:QCLMC}
As for the CLMC method in Section~\ref{sec:CLMC} we assume that for the level parameter $\l > 0$ we have approximations to the quantity of interest $(Q(\l); \l>0)$, the deterministic maximal level is given by $\Lm \in (0,\infty]$, $M \in \N$ is the total number of samples and $L_{r}$ is a random variable with finite expectation, $\P(L_r \geq \l) > 0$ and that is independent of the stochastic process $(Q(\l))_{\l \geq 0}$. Then, the QCLMC estimator is defined by
\begin{equation}
\begin{aligned}
     \widehat{Q}_{0, \Lm}^\QCLMC = \frac{1}{M} \sum_{k=1}^M \int_0^{\Lm \land \bar{L}} \frac{1}{\P(L_r \geq \l)} \left(\dQl\right)^\k \; \mathds{1}_{[0, L_r^\k]} (\l) \dl,
%       \widehat{Q}_{\Lm}^\QCLMC = \frac{1}{M} \sum_{k=1}^M \int_0^{\Lm} e^{r \l} \left(\dQl\right)^\k \; \mathds{1}_{[0, L_r^\k]} (\l) \dl,
\end{aligned}
\label{eq:QCLMC}
\end{equation}
where furthermore and most importantly $(L_r^\k; k=1,\dots,M)$ is a deterministic sequence obtained via inverse transformation (see Lemma~\ref{lma:inverse_transform_and_convergence}) with $\ka = 0$ and $\bar{L} := \max\{L_r^\k;\;k=1,\dots,M\}$. Note, that the sequence $(L_r^\k; k=1,\dots,M)$ does not consist of i.i.d. copies of the random variable $L_r$. 

For showing the unbiasedness result of the QCLMC estimator and the complexity theorem we choose $L_r \sim \Exp(r)$ with parameter $r > 0$ as for the CLMC estimator and note that this is also not the only choice in QCLMC. The distribution generally has to satisfy the assumptions in Remark~\ref{rem:choice_L_r} and in Lemma \ref{lma:inverse_transform_and_convergence}. 

Since we deal with a deterministic sequence for $(L_r^\k;\;k=1,\dots,M)$ the use of any distributional properties of random sequences is not reasonable here anymore. Thus, additionally to the expectation $\E[\widehat{Q}_{0, \Lm}^\QCLMC]$ of the estimator we consider the limit $M \to \infty$, see, e.g., \cite[Chapter $3$]{Gentle2003_RandomNumberGeneration}, when investigating the unbiasedness of the QCLMC estimator. Note, that the samples $(\dQl^\k,\;k=1,\dots,M)$ are still i.i.d. copies of the random variable $\dQl$ in QCLMC.
\begin{proposition}
\label{prop:QCLMC_unbiased}
 Assume that $\bar{L} = \ln\left(\tilde{c}^{-\frac{1}{r}} M^{\frac{1 - \ka}{r}}\right)$ with $0 \leq \ka < 1$, $r > 0$ and a constant $\tilde{c} > 0$ independent of $M \in \N$ and suppose there exist positive constants $\al$ and $c_1$ such that for any $\l > 0$
 \begin{equation}
  \left|\E \left[\dQll\right]\right| \leq c_1 e^{-\al \l}.
  \label{eq:QCLMC_meandecay_prop}
 \end{equation}
 Then, in the limit $M \to \infty$, the QCLMC estimator \eqref{eq:QCLMC} is an unbiased estimator for $\E[Q(\Lm) - Q(0)]$, i.e.,
 \begin{equation*}
  \lim_{M \to \infty } \E[\widehat{Q}_{0, \Lm}^\QCLMC] = \E[Q(\Lm) - Q(0)].
 \end{equation*}
 If $\Lm = \infty$ it holds $Q(\Lm) = Q(\infty) = \calQ$ and the QCLMC estimator is an unbiased estimator to the real quantity of interest $\E[\calQ - Q(0)]$.
\end{proposition}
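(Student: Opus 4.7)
The plan is to compute the expectation of the QCLMC estimator directly, isolate the empirical tail estimator of $L_r$, and then apply Lemma~\ref{lma:inverse_transform_and_convergence} to bound the resulting discrepancy-driven error as $M \to \infty$. Since the points $(L_r^\k)$ are deterministic, only the derivative samples $(\textup{d}Q/\dl)^\k$ are random, and they are identically distributed. Taking expectations inside the integral (justified by Fubini together with the integrability from \eqref{eq:QCLMC_meandecay_prop}), one obtains
\begin{equation*}
    \E\bigl[\widehat{Q}_{0,\Lm}^\QCLMC\bigr]
    \;=\; \int_0^{\Lm\wedge \bar{L}} \frac{1}{\P(L_r\geq \l)}\,\E\!\left[\dQll\right]\cdot \underbrace{\frac{1}{M}\sum_{k=1}^M \mathds{1}_{[0,L_r^\k]}(\l)}_{=:\widehat{T}_M(\l)}\, \dl.
\end{equation*}

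Next I would decompose $\widehat{T}_M(\l) = \P(L_r\geq \l) + R_M(\l)$, where by Lemma~\ref{lma:inverse_transform_and_convergence} (with $\ka=0$) we have $\sup_\l |R_M(\l)| \leq c_{disc}\, M^{-1}$. This splits the expectation into a main term
\begin{equation*}
    \int_0^{\Lm\wedge \bar{L}} \E\!\left[\dQll\right]\dl \;=\; \E\bigl[Q(\Lm\wedge\bar{L}) - Q(0)\bigr],
\end{equation*}
plus an error term $\calE_M := \int_0^{\Lm\wedge\bar{L}} \frac{R_M(\l)}{\P(L_r\geq\l)} \E[\textup{d}Q/\dl]\,\dl$. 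The main term converges to $\E[Q(\Lm) - Q(0)]$ as $M\to\infty$, because $\bar{L} = \ln(\tilde{c}^{-1/r} M^{(1-\ka)/r}) \to \infty$, so eventually $\Lm\wedge\bar{L}=\Lm$ (and in the case $\Lm=\infty$, $\bar L \to \infty$ combined with $Q(\l) \to \calQ$ $\P$-a.s.\ and uniform integrability delivered by \eqref{eq:QCLMC_meandecay_prop}).

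The main obstacle is the error term $\calE_M$, since the weight $1/\P(L_r\geq \l) = e^{r\l}$ may grow faster than $\E[\textup{d}Q/\dl]$ decays, depending on the relative size of $r$ and $\al$. Using \eqref{eq:QCLMC_meandecay_prop} and Lemma~\ref{lma:inverse_transform_and_convergence} one gets
\begin{equation*}
    |\calE_M| \;\leq\; c_{disc}\, c_1\, M^{-1} \int_0^{\bar{L}} e^{(r-\al)\l}\,\dl.
\end{equation*}
I would then split into the three sub-cases $r>\al$, $r=\al$, $r<\al$. The delicate sub-case is $r>\al$, where the integral grows like $e^{(r-\al)\bar L}/(r-\al)$. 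Substituting $e^{\bar L} = \tilde c^{-1/r} M^{(1-\ka)/r}$ shows that the total exponent of $M$ in $|\calE_M|$ becomes
\begin{equation*}
    \ka - 1 + \tfrac{(1-\ka)(r-\al)}{r} \;=\; -\tfrac{(1-\ka)\al}{r} \;<\; 0,
\end{equation*}
so $\calE_M \to 0$. The sub-cases $r=\al$ and $r<\al$ are even easier: the integral is at most $\bar L$ (logarithmic in $M$) or $1/(\al-r)$ respectively, and in both cases $M^{\ka-1}$ dominates.

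Combining the two pieces, $\E[\widehat{Q}_{0,\Lm}^\QCLMC] \to \E[Q(\Lm) - Q(0)]$ as $M\to\infty$, which is the claim; the case $\Lm=\infty$ follows by an additional dominated-convergence step using \eqref{eq:QCLMC_meandecay_prop} to justify exchanging limit and expectation. The one place requiring real care is the justification of the $r>\al$ sub-case, because it is the balance between the exponentially growing importance weight and the $F$-discrepancy rate that forces the specific logarithmic scaling of $\bar L$ in $M$ assumed in the statement.
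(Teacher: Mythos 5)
Your proposal is correct and takes essentially the same route as the paper: both take the expectation inside the integral, add and subtract the true tail $\P(L_r\geq\l)=e^{-r\l}$ to isolate the $F$-discrepancy error, bound the resulting error term via Lemma~\ref{lma:inverse_transform_and_convergence} and the mean-decay assumption, and then case-split on the sign of $r-\al$ while substituting the explicit form of $\bar L$ to kill the growth of the importance weight. The only cosmetic slip is stating $\sup_\l|R_M(\l)|\leq c_{disc}M^{-1}$ ``with $\ka=0$'' before carrying out the exponent computation for general $\ka$, but the subsequent arithmetic correctly uses $M^{\ka-1}$ and recovers the paper's exponent $-(1-\ka)\al/r<0$.
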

\begin{proof}
First, we compute an equality for the expectation of the estimator \eqref{eq:QCLMC} by adding zero in a suitable way and using that $\P(L_r \geq \l) = e^{-r\l}$ 
\begin{equation*}
 \begin{aligned}
   \E[\widehat{Q}_{0, \Lm}^\QCLMC] = & \ \E\left[\frac{1}{M} \sum_{k=1}^M \int_0^{\Lm \land \bar{L}} e^{r \l} \left(\dQl\right)^\k (\l) \; \mathds{1}_{[0, L_r^\k]} (\l) \dl\right] \\
  = & \  \int_0^{\Lm \land \bar{L}} e^{r \l} \, \E\left[\dQll\right] \; \frac{1}{M} \sum_{k=1}^M \mathds{1}_{[0, L_r^\k]} (\l) \dl \\
  = & \ \int_0^{\Lm \land \bar{L}} \left(e^{r \l} \frac{1}{M} \sum_{k=1}^M \mathds{1}_{[0, L_r^\k]} (\l) - \frac{e^{-r\l}}{e^{-r\l}} + 1 \right) \E\left[\dQll\right] \;  \dl \\
  = & \ \int_0^{\Lm \land \bar{L}} e^{r \l} \left(\frac{1}{M} \sum_{k=1}^M \mathds{1}_{[0, L_r^\k]} (\l) - e^{-r\l}\right) \E\left[\dQll\right] \dl +  \int_0^{\Lm \land \bar{L}}  \E\left[\dQll\right] \;  \dl  =: I + II.
 \end{aligned}
\end{equation*}
We bound the integrand of $I$ from below and above. By the $F$-discrepancy property \eqref{eq:F_discrepancy_property} and Assumption \eqref{eq:QCLMC_meandecay_prop} it holds for every $\l > 0$
\begin{equation*}
- c_1 c_{disc} M^{\ka - 1} e^{(r-\al) \l} \leq e^{r\l} \left(\frac{1}{M} \sum_{k=1}^M \mathds{1}_{[0, L_r^\k]}(\l) - e^{-r\l}\right)\E \left[\dQll\right] \leq c_1 c_{disc} M^{\ka - 1} e^{(r-\al) \l}.
\end{equation*}
We integrate the lower and upper bound from zero to $\Lm \land \bar{L}$ to obtain
\begin{equation*}
  \pm c_1 c_{disc} M^{\ka - 1} \int_0^{\Lm \land \bar{L}} e^{(r-\al) \l} \dl = \pm c_1 c_{disc} M^{\ka - 1} \begin{cases}
          \frac{1}{r-\al}\left(e^{(r-\al)(\Lm \land \bar{L})} -1 \right) &\quadfor r \neq \al, \\
          \Lm \land \bar{L} &\quadfor r = \al.
        \end{cases}
\end{equation*}
Note that $\lim_{M \to \infty} \bar{L} = \lim_{M \to \infty} \ln\left(\tilde{c}^{-\frac{1}{r}} M^\frac{1 - \ka}{r}\right) = \infty$ and thus, in the limit $M \to \infty$, for finite $\Lm < \infty$ the minimum $\Lm \land \bar{L}$ is attained for $\Lm$ and we trivially obtain $\lim_{M \to \infty} I = 0$ by the squeeze theorem and $\lim_{M \to \infty} II = \E[Q(\Lm) - Q(0)]$.
If $\Lm = \infty$ we compute in the case $r = \al$
\begin{equation*}
 \lim_{M \to \infty} \pm c_1 c_{disc} M^{\ka - 1} (\Lm \land \bar{L}) = \lim_{M \to \infty}  \pm c_1 c_{disc} M^{\ka - 1} \ln\left(\tilde{c}^{-\frac{1}{r}} M^\frac{1 - \ka}{r}\right) = 0,
\end{equation*}
for all $0 \leq \ka < 1$. For $r \neq \al$ we obtain
\begin{equation*}
\begin{aligned}
 \lim_{M \to \infty} \pm c_1 c_{disc} M^{\ka - 1} \frac{1}{r-\al}\left(e^{(r-\al)\Lm \land \bar{L}} -1 \right) & \  = \lim_{M \to \infty}  \pm c_1 c_{disc} M^{\ka - 1} \frac{1}{r-\al}\left(e^{(r-\al)\ln\left(\tilde{c}^{-\frac{1}{r}} M^\frac{1 - \ka}{r}\right)} -1 \right) \\
 & \ = \lim_{M \to \infty} \pm \left(\frac{c_1 c_{disc} \tilde{c}^{-\frac{r-\al}{r}}}{r-\al} M^{\ka - 1}M^\frac{(r - \al)(1 - \ka)}{r} - \frac{c_1 c_{disc}}{r-\al} M^{\ka - 1} \right)\\
 & \ = \lim_{M \to \infty} \pm \left(\frac{c_1 c_{disc} \tilde{c}^{-\frac{r-\al}{r}}}{r-\al} M^{(\ka - 1)\frac{\al}{r}} - \frac{c_1 c_{disc}}{r-\al} M^{\ka - 1} \right) = 0,
 \end{aligned}
\end{equation*}
for all $0 \leq \ka < 1$ and $\al > 0$. The squeeze theorem again yields $\lim_{M \to \infty} I = 0$ and $\lim_{M \to \infty} II = \E[Q(\infty) - Q(0)] = \E[\calQ - Q(0)]$.
Thus, in any case we obtain in the limit $M \to \infty$ the final result
\begin{equation*}
 \lim_{M \to \infty} \E[\widehat{Q}_{0, \Lm}^\QCLMC] = \E[Q({\Lm}) - Q(0)],
\end{equation*}
for all $\Lm \in (0, \infty]$.
\end{proof}
\begin{remark}
 \label{rem:max_L_r}
 For any sequence $x^\k$ in $[0,1)$ satisfying the discrepancy convergence property \ref{eq:discr_conv} for the star-discrepancy, it holds $\max\{x^\k; k=1,\dots,M\} = 1 - \tilde{c} M^{\ka - 1}$ for a constant $0 < \tilde{c} \leq c_{disc}$ independent of $M \in \N$ and we obtain for the sequence $L_r^\k$ from Remark \ref{rem:F_discrepancy_exponential} that
\begin{equation}
\begin{aligned}
 \bar{L} := \max\{L_r^\k; k=1,\dots,M\} = & \  -\frac{\ln(1 - \max\{x^\k; k=1,\dots,M\}}{r} \\
 \ & =  -\frac{\ln(1 - (1 - \tilde{c} M^{\ka - 1}))}{r} = \ln\left(\tilde{c}^{-\frac{1}{r}} M^\frac{1 - \ka}{r}\right),
 \end{aligned}
 \label{eq:L_r_bound}
\end{equation}
 showing that the assumption from Proposition \ref{prop:QCLMC_unbiased} is satisfied.
\end{remark}

Next, we prove a complexity theorem for the new QCLMC estimator with explicit treatment of the $(L_r^\k; k=1,\dots, M)$ as a deterministic quasi-random sequence with $\ka = 0$.
\begin{theorem}[QCLMC - complexity theorem]
\label{thm:QCLMC-complexity}
Denote by $(Q(\l);\l \geq 0)$ a stochastic process defined on a probability space $({\Omega}, {\calA}, {\P})$ with $\E\left[\left|\dQl\right|\right] \in L^1((0,\infty);\R)$, corresponding to a family of numerical approximations of $\calQ$ such that $Q(\l) \to \calQ$ as $\l \to \infty$ ${\P}\textup{-almost surely}$. Suppose there exist positive constants $\al$, $\be$, $\ga$, $c_1$, $c_2$, $c_3$ such that for any $\l > 0$:
    \begin{subequations}
        \begin{equation}
                \left|\E \left[\dQll\right]\right| \leq c_1 e^{-\al \l},
            \label{eq:QCLMC_meandecay}
        \end{equation}   
        \begin{equation}
            \V \left[\dQll\right] \leq c_2 e^{-\be \l}, 
            \label{eq:QCLMC_variancedecay}
        \end{equation}   
        \begin{equation}
                \frac{\text{d} \calC[\l]}{\dl} \leq c_3 e^{\ga \l},
            \label{eq:QCLMC_costincrease}
        \end{equation}   
    \end{subequations}
    where $\calC[\l]$ is the total accumulated cost to compute a sample of $Q(\l)$. Further, let $(L_r^\k; k=1,\dots,M)$ be a deterministic quasi-random sequence obtained by inverse transformation (see Lemma~\ref{lma:inverse_transform_and_convergence} with $\ka =  0$), and let $r \in [\min\{\be, 2\al, \ga\}, \max\{\min\{\be,2\al\}, \ga\}]$. Then, there exist $\Lm \in (0, \infty]$ and $M \in \N$, such that for any $\eps \in (0, \frac{1}{e})$ it holds
    \begin{equation*}
        \MSE^\QCLMC  \leq \eps^2 \quad \hbox{ and }\quad \calC[\widehat{Q}_{0, \Lm}^\QCLMC] \leq C \eps^{-2 - \max\{0, \frac{\ga - \min\{\be,2\al\}}{\al} \}} |\ln(\eps)|^{\delta_{r, \be} + \delta_{r, \ga}},
    \end{equation*}
    where $\delta$ is the Dirac function and $C > 0$ is independent of $\Lm, M$ and $\eps$.
\end{theorem}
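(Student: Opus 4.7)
The plan is to mirror the standard CLMC complexity template: split the MSE into squared bias and variance, bound each term, control the cost, and then tune the free parameters $L_{max}$ and $M$ so that $\MSE^\QCLMC \leq \eps^2$ while minimizing the cost. The crucial point where QCLMC beats CLMC is that the variance of each summand is a variance of a deterministic integral (not an expectation over $L_r$), so the second moment bound never invokes $2\al$, and the $\delta_{r,2\al}$ log factor in the CLMC bound disappears.

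\textbf{Step 1 (bias).} By Proposition~\ref{prop:QCLMC_unbiased} the expectation of $\widehat{Q}_{0,\Lm}^{\QCLMC}$ differs from $\E[Q(\Lm)-Q(0)]$ by the ``term $I$'' there, which using the $F$-discrepancy bound \eqref{eq:F_discrepancy_property} with $\ka=0$ and \eqref{eq:QCLMC_meandecay} is dominated by $c_1 c_{disc} M^{-1}\int_0^{\Lm\wedge\bar{L}} e^{(r-\al)\l}\,\dl$. Adding the truncation error $|\E[\calQ - Q(\Lm)]|\leq (c_1/\al)e^{-\al \Lm}$ from \eqref{eq:QCLMC_meandecay}, I obtain a total bias bound of the form
\begin{equation*}
|\text{Bias}| \leq \tfrac{c_1}{\al}e^{-\al\Lm} + c_1 c_{disc} M^{-1}\,\Phi(r,\al,\Lm),
\end{equation*}
where $\Phi$ equals $1/|\al-r|$, $\Lm$, or $e^{(r-\al)\Lm}/(r-\al)$ depending on the sign of $r-\al$. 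I will choose $\Lm \asymp \al^{-1}|\ln\eps|$ to make the first term $\lesssim \eps$, and will choose $M$ large enough (see Step~4) that the second term is also $\lesssim \eps$.

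\textbf{Step 2 (variance).} Because the $L_r^{(k)}$ are deterministic and the derivative samples $(\dQl)^{(k)}$ are i.i.d., the $M$ summands of $\widehat{Q}_{0,\Lm}^{\QCLMC}$ are independent, so
\begin{equation*}
\V[\widehat{Q}_{0,\Lm}^{\QCLMC}] = \tfrac{1}{M^2}\sum_{k=1}^{M}\V\!\left[\int_0^{\Lm\wedge L_r^{(k)}} e^{r\l}\,\bigl(\dQl\bigr)^{(k)}\dl\right].
\end{equation*}
For each summand I write the variance as a double integral over the covariance of $\dQl$, apply the Cauchy–Schwarz bound $|\Cov(X(\l_1),X(\l_2))|\leq \sqrt{\V[X(\l_1)]\V[X(\l_2)]}$, and use \eqref{eq:QCLMC_variancedecay} to obtain
\begin{equation*}
\V_k := \V\!\left[\int_0^{\l_k^\ast}e^{r\l}\tfrac{dQ}{d\l}\dl\right] \leq c_2\left(\int_0^{\l_k^\ast} e^{(r-\be/2)\l}\dl\right)^{\!2},
\end{equation*}
with $\l_k^\ast = \Lm\wedge L_r^{(k)}$. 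Observe that only $\be$ appears here; this is where the $2\al$ factor is avoided. The three cases $r\lessgtr \be/2$ yield a constant bound, a quadratic-in-$\l_k^\ast$ bound, or an $e^{(2r-\be)\l_k^\ast}$ bound respectively. Summing and using the discrepancy-driven approximation $\tfrac{1}{M}\sum_k f(L_r^{(k)})\approx \E[f(L_r)]$ (with an $O(M^{-1})$ discrepancy remainder) shows $\V[\widehat{Q}_{0,\Lm}^{\QCLMC}] \lesssim M^{-1}$ whenever $r<\be$, picking up a single $\log$ factor only if $r=\be$.

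\textbf{Step 3 (cost).} Assumption \eqref{eq:QCLMC_costincrease} gives $\calC_k \leq (c_3/\ga)(e^{\ga\l_k^\ast}-1)$. Summing over $k$ and again using the $F$-discrepancy to replace the empirical mean by an expectation, I get
\begin{equation*}
\calC[\widehat{Q}_{0,\Lm}^{\QCLMC}] \lesssim M \cdot \E\!\left[e^{\ga(\Lm\wedge L_r)}\right],
\end{equation*}
which is $O(M)$ when $r>\ga$, $O(M|\ln\eps|)$ when $r=\ga$, and $O(M e^{(\ga-r)\Lm})$ when $r<\ga$ (with $\Lm$ from Step~1).

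\textbf{Step 4 (parameter choice and case analysis).} To get $\V[\widehat{Q}]\leq \eps^2/2$ I take $M\asymp \eps^{-2}$; this also makes the discrepancy bias from Step~1 $\lesssim \eps$. I then distinguish:
\begin{itemize}
\item If $\min\{\be,2\al\}>\ga$: pick $r\in(\ga,\min\{\be,2\al\})$; both the cost-per-sample expectation and the variance sum are $O(1)$, yielding $\calC\lesssim \eps^{-2}$.
\item If $\min\{\be,2\al\}<\ga$: pick $r=\min\{\be,2\al\}$; the cost-per-sample is $e^{(\ga-r)\Lm}\asymp\eps^{-(\ga-\min\{\be,2\al\})/\al}$, giving the claimed exponent $-2-(\ga-\min\{\be,2\al\})/\al$.
\item Boundary cases $r=\be$ or $r=\ga$ produce one $|\ln\eps|$ factor each, matching $\delta_{r,\be}+\delta_{r,\ga}$.
\end{itemize}

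\textbf{Main obstacle.} The principal delicacy is the rigorous passage from the deterministic empirical sums $\tfrac{1}{M}\sum_k f(L_r^{(k)})$ to their expectations, uniformly across the various choices of $f$ (the variance kernel $e^{(2r-\be)\l}$ and the cost kernel $e^{\ga\l}$); one must quantify both the $F$-discrepancy error and the integrability of $f$ against the exponential density, and ensure that the discrepancy contribution is dominated by the target $\eps$. Carefully tracking which of these sums uses $\be$ versus $2\al$ is what produces the improved log-factor count $\delta_{r,\be}+\delta_{r,\ga}$ compared to the CLMC result \eqref{eq:CLMC-complexity}.
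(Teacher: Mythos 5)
Your proposal follows the same overall architecture as the paper's proof: split the MSE into squared bias plus variance, bound each term using the exponential tail of $L_r$ together with the $F$-discrepancy estimate of Lemma~\ref{lma:inverse_transform_and_convergence}, bound the cost via the cost-rate assumption and the empirical tail, and finally tune $\Lm \asymp \al^{-1}|\ln\eps|$ and $M$ to balance the pieces. Your observation that the $M$ summands are independent (because $L_r^\k$ is deterministic, so only the $\dQl^\k$ randomness enters) and that this is exactly why the exponent $2\al$ and the factor $\delta_{r,2\al}$ from the CLMC bound disappear is the key structural insight and matches the paper's. Reformulated, your bound $\V_k \leq c_2 (\int_0^{\l_k^\ast} e^{(r-\be/2)\l}\dl)^2$, summed in $k$, is algebraically identical to the double-integral expression the paper works with.

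The genuine gap lies precisely where you flag the ``main obstacle''. Writing ``$\tfrac{1}{M}\sum_k f(L_r^\k) \approx \E[f(L_r)]$ with an $O(M^{-1})$ discrepancy remainder'' is false as stated for the relevant $f$: the $F$-discrepancy bound in \eqref{eq:F_discrepancy_property} controls the Kolmogorov--Smirnov distance of the empirical tail, and transferring it to a general test function via a Koksma--Hlawka-type argument introduces the total variation of $f$ as a multiplicative factor. For the variance kernel $f(\l)=(\int_0^{\Lm\wedge\l}e^{(r-\be/2)s}\,ds)^2$ and the cost kernel $f(\l)=e^{\ga(\Lm\wedge\l)}$, this variation grows like $e^{(2r-\be)\Lm}$ and $e^{\ga\Lm}$ respectively, so the ``remainder'' is not uniformly $O(M^{-1})$; it is $O(M^{-1}e^{(2r-\be)\Lm})$, which is exactly the size of the discrepancy term $I$ (see \eqref{eq:integral_I}) that the paper isolates by inserting the zero $\pm e^{-r\max\{\l,\l'\}}$ \emph{inside} the double integral. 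Left as ``$O(M^{-1})$'', your Step~2 would not produce the $M^{-2}e^{(2r-\be)\Lm}$ scaling that makes the subsequent case analysis and the final exponent come out right. Similarly, the blanket choice $M\asymp\eps^{-2}$ in Step~4 is insufficient in the boundary/hard cases: for $r=\be$ your own variance bound has an extra $|\ln\eps|$ factor, and for $r>\min\{\be,2\al\}$ the discrepancy remainder forces $M\gtrsim\eps^{-2-(r-\min\{\be,2\al\})/\al}$ (cf.~the paper's choice \eqref{eq:lower_bound_M_mse}); your cost conclusion is still correct, but only after correcting $M$. In short: the skeleton and the key structural observation are right, but the passage from empirical sums to expectations must be made quantitative in $\Lm$, not just in $M$, and the sample-size choice must absorb the resulting $\eps$-dependent factors.
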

 Before stating the proof let us note, that with an appropriate choice of $r$, the complexity of the QCLMC estimator given in Theorem \ref{thm:QCLMC-complexity} is the same as for the CLMC estimator given in Equation \eqref{eq:CLMC-complexity} and the MLMC estimator, cf.~\cite{Giles2008_MLMCPath, Giles2015_MLMC}, but with a potentially lower constant in the upper bound to the cost. This potential is investigated for the CLMC and QCLMC method in the numerical experiments in Section \ref{sec:numerical_experiments}.
\begin{proof}

To deliberately use the $F$-discrepancy property of the quasi-random sequence, the proof is based on the standard decomposition of the MSE 
\begin{equation*}
 \MSE_{0,\Lm}^\QCLMC = \V[\widehat{Q}_{0,\Lm}^\QCLMC] + \E[\widehat{Q}_{0, \Lm}^\QCLMC - (\calQ - Q(0))]^2,
 \label{eq:qclmc_mse}
\end{equation*}
into a variance and squared bias term, which is to be bounded by $\eps^2$ for a given $0 < \eps < e\inv$. The proof is split into three parts: In the first part we compute the squared bias of the QCLMC estimator and divide it into a term depending only on $\Lm \land \bar{L}$ and terms depending on $\Lm \land \bar{L}$ and $M$, but each of these terms again depends differently on the total sample size $M$. We choose $\Lm \land \bar{L}$ to bound the first term by $\frac{\eps^2}{2}$ and $M$ to bound the remaining terms by $\frac{\eps^2}{4}$. Then, in the second part we bound the variance of the QCLMC estimator in terms of $\frac{\eps^2}{4}$ by an appropriate choice of $M$. Let us note here, that in each of these bounds appear two types of terms again, each depending differently on the total sample size $M$. Finally, in the third part we bound the total cost of the QCLMC estimator with the aggregated choices of $M$ from the first two parts, finishing the proof. Optimized MSE splits based on the problem parameters $\al, \be, \ga, c_1, c_2, c_3$ may as well be obtained, but are omitted for simplicity of notation.
A key concept in the proof is the insertion of a zero in the integral quantities (as done in the proof of Proposition \ref{prop:QCLMC_unbiased}) in order to use the $F$-discrepancy property of the quasi-random sequence.
Utilizing that $L_r \sim \Exp(r)$, we the estimate
\begin{equation}
 \begin{aligned}
\left|\E\left[\widehat{Q}_{0, \Lm}^\QCLMC - (\calQ - Q(0))\right]\right| \leq & \  \left|\E[\calQ - Q(\Lm \land \bar{L})]\right| + \left|\E[\widehat{Q}_{0, \Lm}^\QCLMC] - \E[Q({\Lm} \land \bar{L}) - Q(0)]\right|\\
% \leq & \ \left|\frac{c_{disc}}{M} \int_0^{\Lm} e^{r \l} \E\left[\left(\dQll\right)\right] d\l \right| + \int_{\Lm}^\infty \left|\E\left(\frac{\dQ(\l)}{\dl}\right)\right| \dl\\
\leq & \ \int_{\Lm \land \bar{L}}^\infty \left|\E\left[\frac{\dQ(\l)}{\dl}\right]\right| \dl + \frac{c_{disc}}{M} \int_0^{\Lm \land \bar{L}} e^{r \l} \left|\E\left[\dQll\right]\right| d\l  \\  
\leq & \ c_1 \int_{\Lm \land \bar{L}}^\infty e^{-\al \l} \dl  + \frac{c_{disc}}{M} \,c_1 \int_0^{\Lm \land \bar{L}} e^{(r-\al)\l} d\l \\
= & \ \frac{c_1}{\al} e^{-\al (\Lm  \land \bar{L})} + \frac{c_{disc}\,c_1}{M}  \cdot      \begin{cases}
          \frac{1}{r-\al}\left(e^{(r-\al)(\Lm  \land \bar{L})} -1 \right) &\quadfor r \neq \al, \\
           \Lm \land \bar{L} &\quadfor r = \al,
        \end{cases} 
 \end{aligned}
 \label{eq:qclmc_bias_est}
\end{equation}
with explicit dependence on $c_{disc}, c_1, \al, r, M, \bar{L}$ and $\Lm$. We restrict to the case $r = \al$ and compute for the bound of the squared bias
\begin{equation*}
\begin{aligned}
  \left|\E\left[\widehat{Q}_{0, \Lm}^\QCLMC - (\calQ - Q(0))\right]\right|^2 & \ \leq \frac{c_1^2}{\al^2} e^{-2\al (\Lm  \land \bar{L})} + \frac{c_{disc}}{M} \frac{2c_1^2}{\al} (\Lm  \land \bar{L}) e^{-\al (\Lm  \land \bar{L})} + \frac{c_{disc}^2}{M^2} c_1^2 (\Lm  \land \bar{L})^2.
  \end{aligned}
\end{equation*}
Next, we consider the other case $r \neq \al$, which yields
\begin{equation*}
\begin{aligned}
   \left|\E\left[\widehat{Q}_{0, \Lm}^\QCLMC - (\calQ - Q(0))\right]\right|^2\leq \frac{c_1^2}{\al^2} e^{-2\al (\Lm  \land \bar{L})} & \ + \frac{c_{disc}}{M} \frac{2c_1^2}{\al} \frac{1}{r-\al}\left(e^{(r-2\al)(\Lm  \land \bar{L})} - e^{-\al (\Lm  \land \bar{L})}\right) \\
   & \ + \frac{c_{disc}^2}{M^2} c_1^2 \frac{1}{(r-\al)^2}\left(e^{2(r-\al)(\Lm  \land \bar{L})} + 1 \right),
  \end{aligned}
\end{equation*}
where we bounded strictly negative terms by zero from above. 
As the upper bounds to the squared bias we define the functions 
\begin{equation*}
 \begin{aligned}
  B_1(\Lm  \land \bar{L}) := \frac{c_1^2}{\al^2} e^{-2\al (\Lm  \land \bar{L})},
 \end{aligned}
\end{equation*}
which depends only on $(\Lm  \land \bar{L})$ and
\begin{equation*}
 \begin{aligned}
B_2(\Lm  \land \bar{L}, M) := \begin{cases}
                \frac{c_{disc}^2}{M^2} c_1^2 (\Lm  \land \bar{L})^2 + \frac{c_{disc}}{M} \frac{2c_1^2}{\al} (\Lm  \land \bar{L}) e^{-\al (\Lm  \land \bar{L})} &\quadfor r = \al, \\
                \frac{c_{disc}^2}{M^2} c_1^2 \frac{1}{(r-\al)^2}\left(e^{2(r-\al)(\Lm  \land \bar{L})} + 1 \right) \\
                \qquad + \frac{c_{disc}}{M} \frac{2c_1^2}{\al} \frac{1}{r-\al}\left(e^{(r-2\al)(\Lm  \land \bar{L})} - e^{-\al (\Lm  \land \bar{L})}\right) &\quadfor r \neq \al,
              \end{cases}
 \end{aligned}
\end{equation*}
which depends on $(\Lm  \land \bar{L})$ and $M$. In order for $B_1$ to equal $\frac{\eps^2}{2}$, we choose $(\Lm  \land \bar{L})$ such that
\begin{equation}
 \begin{aligned}
  B_1(\Lm  \land \bar{L}) = \frac{c_1^2}{\al^2} e^{-2\al (\Lm  \land \bar{L})} = \frac{\eps^2}{2} \quad \text{ leading to } \quad \Lm  \land \bar{L} = \frac{1}{\al} \ln\left(\frac{c_1 \, \sqrt{2}}{\al \eps}\right).
 \end{aligned}
 \label{eq:qclmc_L_max}
\end{equation}
To avoid complicated case distinctions we set $\Lm = \frac{1}{\al} \ln\left(\frac{c_1 \, \sqrt{2}}{\al \eps}\right)$ and choose $\bar{L} = \ln(\tilde{c}^{-\frac{1}{r}}M^{\frac{1}{r}}) \geq \frac{1}{\al} \ln\left(\frac{c_1 \, \sqrt{2}}{\al \eps}\right)$ such that $\Lm  \land \bar{L} = \Lm$ leading to 
\begin{equation}
 M \geq \tilde{c} \left(\frac{c_1 \, \sqrt{2}}{\al}\right)^\frac{r}{\al} \eps^{-\frac{r}{\al}} = C_0\, \eps^{-\frac{r}{\al}},
 \label{eq:lower_bound_M_discr}
\end{equation}
for a constant $C_0 := \tilde{c} \left(\frac{c_1 \, \sqrt{2}}{\al}\right)^\frac{r}{\al}$ independent of $M$, $\Lm$ and $\eps$. Later, we see that this choice of $M$ subsumes in with the other choices of $M$ in the rest of this proof and is not restrictive with respect to its dependence on $\eps$.

Using the definition for $\Lm$, we can estimate terms depending on $\Lm$ in terms of $\eps$. For $\eps < e\inv$ it holds $|\ln(\eps)| > 1$ and thus we compute
\begin{equation}
\begin{aligned}
  \Lm = \frac{1}{\al} \ln\left(\frac{c_1 \, \sqrt{2}}{\al \eps}\right) & \ = \frac{1}{\al} \ln\left(\frac{c_1 \, \sqrt{2}}{\al} \eps\inv\right) \\
  & \ = \frac{1}{\al} \ln\left(\frac{c_1 \, \sqrt{2}}{\al} \right)|\ln(\eps)| + \frac{1}{\al}|\ln(\eps)| \leq \frac{1}{\al}\max\left\{\ln\left(\frac{c_1 \, \sqrt{2}}{\al} \right), 1\right\} |\ln(\eps)| = C_1  |\ln(\eps)|, 
  \end{aligned}
   \label{eq:qclmc_helping_bounds_1}
\end{equation}
with $C_1:= \frac{1}{\al}\max\left\{\ln(\frac{c_1 \, \sqrt{2}}{\al} ), 1\right\} > 0$ independent of $M$, $\Lm$ and $\eps$.
Further, we get for a scalar $\rho \in \R$
\begin{equation}
 e^{\rho \Lm} = e^{\frac{\rho}{\al} \ln\left(\frac{c_1 \, \sqrt{2}}{\al \eps}\right)} = \left(\frac{c_1 \, \sqrt{2}}{\al \eps}\right)^\frac{\rho}{\al} = C_2 \eps^{-\frac{\rho}{\al}},
  \label{eq:qclmc_helping_bounds_2}
\end{equation}
with a constant $C_2:= \left(\frac{c_1 \, \sqrt{2}}{\al}\right)^\frac{\rho}{\al} > 0$ independent of $M$, $\Lm$, and $\eps$. Simply combining both relations we obtain
\begin{equation}
\begin{aligned}
  \Lm e^{\rho \Lm} \leq C_1 C_2  \eps^{-\frac{\rho}{\al}}|\ln(\eps)|.
  \end{aligned}
   \label{eq:qclmc_helping_bounds_3}
\end{equation}
Furthermore, exponential terms with negative exponent are bounded by one and negative terms are bounded by zero from above.

With these upper bounds at hand we distinct between five different relations between $r$ and $\al$. Recall, that we wish to bound $B_2(\Lm, M)$ in each case against $\frac{\eps^2}{4}$ by using $\Lm$ from Equation $\eqref{eq:qclmc_L_max}$ and choosing $M$ accordingly. 
\\
\textbf{Case 1: $(r < \al)$}
\begin{equation*}
 \begin{aligned}
  B_2(\Lm, M) = & \  \frac{c_{disc}^2}{M^2} c_1^2  \frac{1}{(r-\al)^2}\left(e^{2(r-\al)\Lm} + 1 \right)  + \frac{c_{disc}}{M} \frac{2c_1^2}{\al}\frac{1}{r-\al}\left(e^{(r-2\al)\Lm} - e^{-\al \Lm}\right) \leq C_3 \frac{1}{M}.
  \end{aligned}
\end{equation*}
\\
\textbf{Case 2: $(r = \al)$}
\begin{equation*}
 \begin{aligned}
  B_2(\Lm, M) = & \  \frac{c_{disc}^2}{M^2} c_1^2\Lm^2  + \frac{c_{disc}}{M} \frac{2c_1^2}{\al}  \Lm e^{-\al \Lm} \leq C_4 \left(\frac{\ln(\eps)^2}{M} + 1 \right) \frac{1}{M}.
  \end{aligned}
\end{equation*}
\\
\textbf{Case 3: $(\al < r < 2 \al)$}
\begin{equation*}
 \begin{aligned}
  B_2(\Lm, M) = & \  \frac{c_{disc}^2}{M^2} c_1^2  \frac{1}{(r-\al)^2}\left(e^{2(r-\al)\Lm} + 1 \right)  + \frac{c_{disc}}{M} \frac{2c_1^2}{\al}\frac{1}{r-\al}\left(e^{(r-2\al)\Lm} - e^{-\al \Lm}\right) \\
  \leq & \ C_5 \left(\frac{1}{M}\eps^{-2} + 1\right) \frac{1}{M}.
  \end{aligned}
\end{equation*}
\\
\textbf{Case 4: $(r = 2\al)$}
\begin{equation*}
 \begin{aligned}
  B_2(\Lm, M) = & \  \frac{c_{disc}^2}{M^2} c_1^2  \frac{1}{\al^2}\left(e^{2\al\Lm} + 1 \right) \frac{c_{disc}}{M} \frac{2c_1^2}{\al^2}\left(1 - e^{-\al \Lm}\right) \leq C_6 \left(\frac{1}{M}\eps^{-2} + 1\right)\frac{1}{M}.
  \end{aligned}
\end{equation*}
We note that no logarithmic $\eps$ contribution appears in the choice for the sample size $M$ for $r = 2\al$, as opposed to the complexity theorem of the standard CLMC estimator.
\\
\textbf{Case 5: $(r > 2\al)$}
\begin{equation*}
 \begin{aligned}
  B_2(\Lm, M) = & \  \frac{c_{disc}^2}{M^2} c_1^2  \frac{1}{(r-\al)^2}\left(e^{2(r-\al)\Lm}  + 1 \right)  + \frac{c_{disc}}{M} \frac{2c_1^2}{\al}\frac{1}{r-\al}\left(e^{(r-2\al)\Lm} - e^{-\al \Lm}\right) \\
  \leq & \  C_7\left(\frac{\eps^{-\frac{r}{\al}}}{M} + 1\right) \frac{\eps^{-\frac{r - 2\al}{\al}}}{M}.
  \end{aligned}
\end{equation*}
All constants $C_3, C_4, C_5, C_6, C_7 > 0$ are independent of $M$, $\Lm$ and $\eps$.
Overall, we obtain 
\begin{equation}
 \begin{aligned}
  B_2(\Lm, M) \leq C_{B_2} \frac{1}{M} 
                  \begin{cases}
                      1 & \quadfor r < \al, \\
                      \frac{\ln(\eps)^2}{M} + 1 & \quadfor r = \al, \\
                      \frac{\eps^{-2}}{M}+ 1  & \quadfor \al < r \leq 2\al, \\
                        \left(\frac{\eps^{-\frac{r}{\al}}}{M} + 1\right)\eps^{-\frac{r - 2\al}{\al}}  & \quadfor r > 2\al. \\
                   \end{cases}
  \end{aligned}
  \label{eq:upper_bound_bias}
\end{equation}
with a constant  $C_{B_2} := \max\{C_3,\,C_4,\,C_5,\,C_6,\,C_7\} > 0$  of $M$, $\Lm$ and $\eps$. Further to bound $B_2$ by $\frac{\eps^2}{4}$ by an appropriate choice of $M$, in all cases $r \geq \al$ the additional contributions from $\eps$ need to be compensated for by the additional factor of $\frac{1}{M}$. We demonstrate this for the case $r > 2\al$ and start by choosing 
\begin{equation*}
 M \geq C_{tmp} \eps^{-2 - \frac{r - 2\al}{\al}}
\end{equation*}
for a constant $C_{tmp}$ to be chosen subsequently. Then, we compute
\begin{equation*}
\begin{aligned}
 C_7\left(\frac{\eps^{-\frac{r}{\al}}}{M} + 1\right) \frac{\eps^{-\frac{r - 2\al}{\al}}}{M} \leq & \ C_7\left(\frac{\eps^{2 + \frac{r - 2\al}{\al}} \eps^{-\frac{r}{\al}}}{C_{tmp}} + 1\right) \frac{\eps^{2 + \frac{r - 2\al}{\al}} \eps^{-\frac{r - 2\al}{\al}}}{C_{tmp}} \\
 = & \  C_7\left(\frac{\eps^{2 + \frac{r - 2\al - r}{\al}}}{C_{tmp}} + 1\right) \frac{\eps^{2}}{C_{tmp}} =  C_7\left(\frac{1}{C_{tmp}} + 1\right) \frac{\eps^{2}}{C_{tmp}},
 \end{aligned}
\end{equation*}
and choose $C_{tmp} := 2 C_7 \left(1 + \sqrt{1 + \frac{1}{C_7}}\right)$ as the solution to the quadratic equation $C_7\left(\frac{1}{C_{tmp}} + 1 \right)\frac{\eps^2}{C_{tmp}} = \frac{\eps^2}{4}$.
Overall, we obtain $B_2 \leq \frac{\eps^2}{4}$, dealing with all different relations between $r$ and $\al$ by choosing $M$ to be
\begin{equation}
 M \geq \widetilde{C}_{B_2} \eps^{-2- \max\{0, \frac{r - 2\al}{\al}\}},
 \label{eq:lower_bound_M_bias}
\end{equation}
with an appropriate constant $\widetilde{C}_{B_2} > 0$ of $M$, $\Lm$ and $\eps$. The initial lower bound for $M$ in Equation \eqref{eq:lower_bound_M_discr} to obtain $\Lm \land \bar{L} = \Lm$ is satisfied by adapting the constant $\widetilde{C}_{B_2}$ to be larger than $C_0$, since it holds $\eps^{-2- \max\{0, \frac{r - 2\al}{\al}\}} \geq \eps^{-\frac{r}{\al}}$ independent of the relation between $r$ and $\al$.

Next, we continue with the bound for the variance term. Note, that the copies of the stochastic process of approximations $\left(Q(\l)^\k; \l \geq 0\right)$ as well as $\left(\dQll^\k; \l > 0\right)$ are i.i.d. for $1 \leq k \leq M$. 
Using the Fubini--Tonelli theorem to exchange the covariance with the integration over the level domain we obtain by linearity of integration and bilinearity of the covariance
\begin{equation}
 \begin{aligned}
    \V & \left[\widehat{Q}_{0, \Lm}^\QCLMC\right] = \Cov\left[\widehat{Q}_{0, \Lm}^\QCLMC, \widehat{Q}_{0, \Lm}^\QCLMC\right] \\
  =& \ \Cov\left[\frac{1}{M} \sum_{k=1}^M \int_0^{\Lm} e^{r \l} \left(\dQl\right)^\k (\l) \; \mathds{1}_{[0, L_r^\k]} (\l) \dl, \frac{1}{M} \sum_{m=1}^M \int_0^{\Lm} e^{r \l'} \left(\frac{\text{d} Q}{\text{d}\l'}\right)^\m (\l') \; \mathds{1}_{[0, L^\m]} (\l') \dl' \right] \\
 =  & \ \frac{1}{M} \int_0^{\Lm} \int_0^{\Lm} \frac{1}{M} \sum_{k=1}^M \mathds{1}_{[0, L_r^\k]} (\l) \mathds{1}_{[0, L_r^\k]} (\l') e^{r \l} e^{r \l'} \Cov\left[\left(\dQll\right),  \left(\frac{\text{d} Q(\l')}{\text{d}\l'} \right)\right]  \dl \dl' \\
  \leq  & \ \frac{1}{M} \int_0^{\Lm} \int_0^{\Lm} \frac{1}{M} \sum_{k=1}^M \mathds{1}_{[0, L_r^\k]} (\l) \mathds{1}_{[0, L_r^\k]} (\l') e^{r \l} e^{r \l'} \V\left(\dQll\right)^\half\V\left(\frac{\text{d}Q(\l')}{\text{d}\l'}\right)^\half  \dl \dl' \\
  \leq & \ \frac{c_2}{M} \int_0^{\Lm} \int_0^{\Lm} \frac{1}{M} \sum_{k=1}^M \mathds{1}_{[0, L_r^\k]} (\l) \mathds{1}_{[0, L_r^\k]} (\l') e^{(r -\frac{\be}{2}) \l} e^{(r -\frac{\be}{2}) \l'} \dl \dl',
 \end{aligned}
 \label{eq:qclmc_var_first}
\end{equation}
where we used the Cauchy--Schwarz inequality on the covariance and the convergence assumption on the variance of $\dQll$ in \eqref{eq:QCLMC_variancedecay} from the complexity theorem.
For fixed $L_r^\k$ it holds $\mathds{1}_{[0, L_r^\k]} (\l) \mathds{1}_{[0, L_r^\k]} (\l') = \mathds{1}_{[0, L_r^\k]} (\max(\l,\l'))$ for $\l,\l' \in \R_{\geq 0}$ and we insert a zero by subtracting and adding $e^{-r \max\{\l, \l'\}}$ to obtain
\begin{equation*}
\begin{aligned}
 \frac{1}{M} \sum_{k=1}^M &  \mathds{1}_{[0, L_r^\k]} (\l) \mathds{1}_{[0, L_r^\k]} (\l') e^{(r -\frac{\be}{2}) \l} e^{(r -\frac{\be}{2}) \l'} \\
  & =  \left( \frac{1}{M} \sum_{k=1}^M \mathds{1}_{[0, L_r^\k]} (\max(\l,\l'))  -  e^{-r \max\{\l, \l'\}} \right) e^{(r -\frac{\be}{2}) \l} e^{(r -\frac{\be}{2}) \l'} + e^{-r \max\{\l, \l'\}} e^{(r -\frac{\be}{2}) \l} e^{(r -\frac{\be}{2}) \l'}. 
\end{aligned}
\end{equation*}
Inserting this back into Equation~\eqref{eq:qclmc_var_first}, we obtain the following two integral terms, 
\begin{equation}
  I := \frac{c_2}{M} \int_0^{\Lm} \int_0^{\Lm}\left( \frac{1}{M} \sum_{k=1}^M \mathds{1}_{[0, L_r^\k]} (\max(\l,\l'))  -  e^{-r \max\{\l, \l'\}} \right)  e^{(r -\frac{\be}{2}) \l} e^{(r -\frac{\be}{2}) \l'}  \dl \dl' 
  \label{eq:integral_I}
\end{equation}
and
\begin{equation}
  II :=  \frac{c_2}{M} \int_0^{\Lm} \int_0^{\Lm} e^{-r \max\{\l, \l'\}} e^{(r -\frac{\be}{2}) \l} e^{(r -\frac{\be}{2}) \l'}  \dl \dl', 
  \label{eq:integral_II}
\end{equation}
that are estimated in Appendix \ref{app:appendix} and lead to the following upper bound of the variance of the QCLMC estimator
\begin{equation*}
 \begin{aligned}
 \V\left[\widehat{Q}_{0, \Lm}^\QCLMC\right] \leq & \  \frac{c_{disc}\, c_2}{M^2} \begin{cases}
      \frac{4}{(2r - \be)^2} \left(e^{(2r-\be)\Lm} - e^{(r-\frac{\be}{2})\Lm} + 1\right) &\quadfor r \neq \frac{\be}{2}, \\
      (\Lm)^2 &\quadfor r = \frac{\be}{2},
  \end{cases}
 \\
 & \ + \frac{c_2}{M} \begin{cases}
                      \frac{2}{(r-\be) (r - \frac{\be}{2})} e^{(r-\be) \Lm} +  \frac{4}{\be(r - \frac{\be}{2})} e^{- \frac{\be}{2} \Lm} -  \frac{4}{(r-\be) \be}  & \quadfor r \neq \frac{\be}{2}, \be, \\
                        -\frac{8}{\be^2} e^{-\frac{\be}{2} \Lm} - \frac{4}{\be} \Lm e^{- \frac{\be}{2} \Lm} + \frac{8}{\be^2} & \quadfor r = \frac{\be}{2}, \\
                       \frac{4}{\be} \Lm + \frac{8}{\be^2} e^{-\frac{\be}{2}\Lm} -  \frac{8}{\be^2} & \quadfor r = \be,
                     \end{cases}
 \end{aligned}
\end{equation*}
with explicit dependence on $c_{disc}, c_2, \be, r, M, \bar{L}$ and $\Lm$. In order to bound the variance in terms of $\eps$ and $M$, we again use the choice of $\Lm$ from Equation \eqref{eq:qclmc_L_max} and the assumption $\eps < e\inv$, that yields $|\ln(\eps)| > 1$. 
First, we trivially bound the variance further by removing all negative terms and by bounding the exponential terms with negative exponent by one to arrive at
\begin{equation*}
 \begin{aligned}
 \V\left[\widehat{Q}_{0, \Lm}^\QCLMC\right] \leq & \  \frac{c_{disc}\, c_2}{M^2} \begin{cases}
      \frac{4}{(2r - \be)^2} \left(e^{(2r - \be)\Lm} + 1\right) &\quadfor r \neq \frac{\be}{2}, \\
      (\Lm)^2 &\quadfor r = \frac{\be}{2},
  \end{cases}
 \\
 & \ + \frac{c_2}{M} \begin{cases}
                      \frac{2}{(r-\be) (r - \frac{\be}{2})} e^{(r-\be) \Lm} +  \frac{4}{\be(r - \frac{\be}{2})} e^{- \frac{\be}{2} \Lm} -  \frac{4}{(r-\be) \be}   & \quadfor r \neq \frac{\be}{2}, \be, \\
                        \frac{8}{\be^2} & \quadfor r = \frac{\be}{2}, \\
                       \frac{4}{\be} \Lm + \frac{8}{\be^2} & \quadfor r = \be,
                     \end{cases}
 \end{aligned}
\end{equation*}
We bound the variance it in each distinct case by using the bounds from Equations $\eqref{eq:qclmc_helping_bounds_1}$ -- $\eqref{eq:qclmc_helping_bounds_3}$. 
\\
\textbf{Case 1: $(r < \frac{\be}{2})$}
\begin{equation*}
\begin{aligned}
 \V\left[\widehat{Q}_{0, \Lm}^\QCLMC\right] & \ \leq \frac{c_{disc}\, c_2}{M^2} \frac{4}{(2r - \be)^2} \left(e^{(2r-\be)\Lm} + 1\right) \\
 & \ \qquad + \frac{c_2}{M} \left(\frac{2}{(r-\be) (r - \frac{\be}{2})} e^{(r-\be) \Lm} +  \frac{4}{\be(r - \frac{\be}{2})} e^{- \frac{\be}{2} \Lm} -  \frac{4}{(r-\be) \be}\right) \leq C_8 \frac{1}{M}.
 \end{aligned}
\end{equation*}
\\
\textbf{Case 2: $(r = \frac{\be}{2})$}
\begin{equation*}
 \V\left[\widehat{Q}_{0, \Lm}^\QCLMC\right] \leq \frac{c_{disc}\, c_2}{M^2} (\Lm)^2 + \frac{c_2}{M} \frac{8}{\be^2} \leq C_9 \left(\frac{\ln(\eps)^2}{M} + 1\right)\frac{1}{M}.
\end{equation*}
\\
\textbf{Case 3: $(\frac{\be}{2} < r < \be)$}
\begin{equation*}
\begin{aligned}
 \V\left[\widehat{Q}_{0, \Lm}^\QCLMC\right] & \ \leq \frac{c_{disc}\, c_2}{M^2} \frac{4}{(2r - \be)^2} \left(e^{(2r-\be)\Lm} + 1\right) \\
 & \ \qquad + \frac{c_2}{M} \left(\frac{2}{(r-\be) (r - \frac{\be}{2})} e^{(r-\be) \Lm} +  \frac{4}{\be(r - \frac{\be}{2})} e^{- \frac{\be}{2} \Lm} -  \frac{4}{(r-\be) \be}\right) \\
 & \ \leq C_{10} \left(\frac{\eps^{-\frac{\be}{\al}}}{M} + 1\right) \frac{1}{M}.
 \end{aligned}
\end{equation*}
\\
\textbf{Case 4: $(r = \be)$}
We compute with $|\ln(\eps)| > 1$
\begin{equation*}
\begin{aligned}
 \V\left[\widehat{Q}_{0, \Lm}^\QCLMC\right] \leq & \ \frac{c_{disc}\, c_2}{M^2} \frac{4}{\be^2} \left(e^{\be \Lm} + 1\right) + \frac{c_2}{M} \left( \frac{4}{\be} \Lm + \frac{8}{\be^2} \right) \leq C_{11} \left(\frac{\eps^{-\frac{\be}{\al}}}{M} + 1 \right) \frac{|\ln(\eps)|}{M}.
 \end{aligned}
\end{equation*}
\\
\textbf{Case 5: $(r > \be)$}
\begin{equation*}
\begin{aligned}
 \V\left[\widehat{Q}_{0, \Lm}^\QCLMC\right] & \ \leq \frac{c_{disc}\, c_2}{M^2} \frac{4}{(2r - \be)^2} \left(e^{(2r-\be)\Lm} + 1\right) \\
 & \ \qquad  + \frac{c_2}{M} \left(\frac{2}{(r-\be) (r - \frac{\be}{2})} e^{(r-\be) \Lm} +  \frac{4}{\be(r - \frac{\be}{2})} e^{- \frac{\be}{2} \Lm} -  \frac{4}{(r-\be) \be}\right) \\
 & \ \leq  C_{12} \left(\frac{\eps^{-\frac{r}{\al}}}{M} + 1 \right) \frac{\eps^{-\frac{r - \be}{\al}}}{M}.
 \end{aligned}
\end{equation*}
All constants $C_8, C_9, C_{10}, C_{11}, C_{12} > 0$ are independent of $M$, $\Lm$ and $\eps$.
Overall, we obtain 
\begin{equation}
 \begin{aligned}
  \V\left[\widehat{Q}_{0, \Lm}^\QCLMC\right] \leq C_{V} \frac{1}{M} 
                  \begin{cases}
                      1 & \quadfor r < \frac{\be}{2}, \\
                      \frac{\ln(\eps)^2}{M} + 1 & \quadfor r = \frac{\be}{2}, \\
                      \frac{\eps^{-\frac{2r - \be}{\al}}}{M} + 1  &\quadfor \frac{\be}{2} < r < \be, \\
                        \left(\frac{\eps^{-\frac{\be}{\al}}}{M} + 1\right)|\ln(\eps)| & \quadfor r = \be, \\
                        \left(\frac{\eps^{-\frac{r}{\al}}}{M} + 1\right) \eps^{-\frac{r - \be}{\al}} & \quadfor r > \be,
                   \end{cases}
  \end{aligned}
  \label{eq:upper_bound_variance}
\end{equation}
with a constant $C_V := \max\{C_8,\,C_9,\,C_{10},\,C_{11},\,C_{12}\} > 0$, independent of $M$, $\Lm$ and $\eps$. Further, to bound the variance by $\frac{\eps^2}{4}$ by an appropriate choice of $M$, in all cases $r \geq \frac{\be}{2}$ the additional contributions from $\eps$ need to be compensated for by the additional factor of $\frac{1}{M}$. This is realized, as shown in the bias part right before Equation \eqref{eq:lower_bound_M_bias}, by choosing 
\begin{equation*}
 M \geq \widetilde{C}_{V} \eps^{- 2 - \max\{0, \frac{r - \min\{\be, 2\al\}}{\al} \}} |\ln(\eps)|^{\delta_{r, \be}},
\end{equation*}
with an appropriate constant $\widetilde{C}_{V} > 0$ independent of $M$, $\Lm$ and $\eps$. The lower bound on $M$ determined by the term $B_2$ (see~\eqref{eq:lower_bound_M_bias}) is no more constraining than this newly established lower bound for $M$, with respect to its dependence on $\eps$, since
\begin{equation*}
\begin{aligned}
 \eps^{-2 - \max\{0, \frac{r - \min\{\be, 2\al\}}{\al}\}} \geq \eps^{-2 - \max\{0, \frac{r - 2\al}{\al}\}}.
 \end{aligned}
\end{equation*}
For the upper bound to the MSE we therefore obtain
\begin{equation*}
 \MSE_{0,\Lm}^\QCLMC \leq \V[\widehat{Q}_{0, \Lm}^\QCLMC] + B_2(\Lm \land \bar{L}, M) + B_1(\Lm \land \bar{L}) \leq \frac{\eps^2}{4} + \frac{\eps^2}{4} + \frac{\eps^2}{2} = \eps^2,
\end{equation*}
by choosing
\begin{equation}
 M \geq \left\lceil C_{MSE} \eps^{- 2 - \max\{0, \frac{r - \min\{\be, 2\al\}}{\al} \}} |\ln(\eps)|^{\delta_{r, \be}} \right\rceil,
 \label{eq:lower_bound_M_mse}
\end{equation}
where $\lceil \cdot \rceil$ denotes the Gauss bracket, for a constant $C_{MSE}:= \max\{\widetilde{C}_{B_2}, \widetilde{C}_{V}\}> 0$ independent of $M$, $\Lm$ and $\eps$.
Finally, we compute an upper bound for the cost of the estimator by
\begin{equation*}
\begin{aligned}
 \calC(\widehat{Q}_{0, \Lm}^\QCLMC) = & \ \sum_{k=1}^M \,\int_0^{\Lm} \mathds{1}_{[0, L^{k}]}(\l) \, \frac{\text{d} \calC[\l]}{\dl} \dl \leq c_3 \sum_{k=1}^M \,\int_0^{\Lm} \mathds{1}_{[0, L^{k}]}(\l) \, e^{\ga \l} \dl \\
 = & \  c_3 \, M\,\int_0^{\Lm} \frac{1}{M} \sum_{k=1}^M\mathds{1}_{[0, L^{k}]}(\l) \, e^{\ga \l} \dl = c_3 \,M\,\int_0^{\Lm} \left( \frac{1}{M} \sum_{k=1}^M\mathds{1}_{[0, L^{k}]}(\l) - e^{-r \l} + e^{-r \l} \right) e^{\ga \l} \dl \\
 \leq  & \ c_3 \,M\,\sup_{\l > 0} \left|\frac{1}{M} \sum_{k=1}^M\mathds{1}_{[0, L^{k}]}(\l) - e^{-r \l}\right|\int_0^{\Lm} e^{\ga \l} \dl +  c_3 \,M\,\int_0^{\Lm} e^{-r \l}  e^{\ga \l} \dl \\
 \leq & \ c_3 c_{disc} \frac{1}{\ga}\left(e^{\ga \Lm} - 1\right) + c_3 M    \begin{cases}
                    \frac{1}{\ga - r} \left(e^{(\ga - r) \Lm} - 1 \right) & \quadfor r \neq \ga, \\
                    \Lm & \quadfor r = \ga,
                    \end{cases}  \\
 \leq & \  C_2 \eps^{-\frac{\ga}{\al}} + C_{13} M \eps^{-\max\{0, \frac{\ga - r}{\al}\}}  |\ln(\eps)|^{\delta_{r , \ga}}
\end{aligned}                                               
\end{equation*}
for a constant $C_{13} > 0$ independent of $M$, $\Lm$ and $\eps$. Bounding the Gauss bracket in Equation~\eqref{eq:lower_bound_M_mse} by adding one to it and inserting this for $M$ we obtain for the cost
\begin{equation*}
 \begin{aligned}
    \calC(\widehat{Q}_{0, \Lm}^\QCLMC) \leq & \ C_2 \eps^{-\frac{\ga}{\al}}  + C_{13} \eps^{-\max\{0, \frac{\ga - r}{\al}\}}  |\ln(\eps)|^{\delta_{r , \ga}} \\
    & \ + C_{13}\eps^{-\max\{0, \frac{\ga - r}{\al}\}}  |\ln(\eps)|^{\delta_{r , \ga}} C_{MSE} \eps^{- 2 - \max\{0, \frac{r - \min\{\be, 2\al\}}{\al} \}} |\ln(\eps)|^{\delta_{r, \be}} \\
    & \ \leq C \eps^{-2 - \max\{0, \frac{\ga - \min\{\be, 2\al\}}{\al} \}} |\ln(\eps)|^{\delta_{r, \be} + \delta_{r, \ga}}.
 \end{aligned}
\end{equation*}
for some constant $C > 0$, independent of $M$, $\Lm$ and $\eps$, finishing the proof. In the last inequality we used the assumption $r \in [\min\{\be, 2\al, \ga\}, \max\{\min\{\be, 2\al\}, \ga\}]$ that yields $\eps^{-\max\{0, \frac{\ga - r}{\al}\}} \eps^{- 2 - \max\{0, \frac{r - \min\{\be, 2\al\}}{\al} \}} = \eps^{-2 - \max\{0, \frac{\ga - \min\{\be, 2\al\}}{\al} \}}$ and furthermore the trivial bounds for $0 < \eps < e\inv$
\begin{equation*}
 \eps^{-\frac{\ga}{\al}} \leq \eps^{-2 - \max\{0, \frac{\ga - \min\{\be, 2\al\}}{\al} \}},  \quad \eps^{-\max\{0, \frac{\ga - r}{\al}\}}  \leq \eps^{- \max\{0, \frac{\ga - \min\{\be, 2\al\}}{\al} \}} \quadand |\ln(\eps)|^{\delta_{r , \ga}}  \leq \eps^{-2}.
\end{equation*}
\end{proof}

\begin{remark}
 \label{rem:complexity_L_infty}
 For simplicity of notation the proof is given for the choice $\Lm < \infty$ such that the bias term is bounded by $\frac{\eps}{\sqrt{2}}$ and an appropriate value of $M$ (see~\eqref{eq:lower_bound_M_discr}) to ensure $\Lm \land \bar{L} = \Lm$ is considered. Note, however, that the complexity theorem still holds in the case $\Lm = \infty$, where $\Lm \land \bar{L} = \bar{L}$, with the same choice of $M$. Most importantly this means, that for $\Lm = \infty$ all error contributions in the MSE, namely the squared bias and the variance, decrease automatically with growing $M$ while the cost does not blow up, because the maximal level $\bar{L} = \ln(\tilde{c}^{-\frac{1}{r}} M^\frac{1}{r})$ grows with $M$ at just the right speed. Further note, that taking the limit $M \to \infty$ in the complexity theorem as done to proof the unbiasedness properties of the QCLMC estimator in Proposition \ref{prop:QCLMC_unbiased} is not sensible, since $\lim_{M\to \infty} \MSE = 0$ with $\lim_{M\to\infty} \calC(\widehat{Q}_{0, \Lm}^\QCLMC) = \infty$. 
\end{remark}

\begin{remark}
 \label{rem:iid_sequence}
 It is important to note, that in order to prove the Complexity Theorem \ref{thm:QCLMC-complexity} using the $F$-discrepancy property as done in this work, the sequence $(L_r^\k;\;k=1,..,M)$ must satisfy $\ka = 0$. This is not the case for an i.i.d. sequence with $\ka = \frac{1}{2}$. The additional dependence on $\eps$ stemming from the use of the $F$-discrepancy property in the upper bounds to $B_2$ (see~\eqref{eq:upper_bound_bias}) and upper bound to the variance (see~\eqref{eq:upper_bound_variance}), cannot be compensated by $M^{\ka - 1}$ with $\ka = \frac{1}{2}$ instead of $\kappa = 0$.
\end{remark}

\section{Numerical experiments}
\label{sec:numerical_experiments}
The derived upper bounds to the MSE from the proofs of the complexity theorems for QCLMC and CLMC allow us to compare the methods to one another in terms of their computational time to error performance on the basis of the underlying stochastic model parameters. In order to do so we introduce a random PDE model as our stochastic model problem and approximate it by a spatial discretization via $h$-adaptive finite elements. Further, in Algorithm \ref{alg:qclmc} we formulate a practical (Q)CLMC algorithm, state how to obtain the sample adaptive meshes via a-posteriori error estimation and we demonstrate how to numerically estimate the model parameters, that are the basis of the performance comparison. %

As a followup experiment we compare the performance of CLMC and QCLMC by estimating the real achieved MSE over a series of runs of the (Q)CLMC algorithm for a growing sequence of sample sizes.

The numerical experiments are implemented in Python, where all finite element computations are implemented in FEniCS \cite{Fenics2015}. The linear systems are solved with its integrated optimized direct $LU$-decomposition. The computations are done on an Intel(R) Core(TM) i$7$-$4770$ CPU running at $3.4$ GHz with $4$ cores and $2$ threads per core.
\subsection{Random PDE model and its discretization}
\label{subsec:random_pde_model_and_its_discretization}
For the comparison of performances between the CLMC and QCLMC method we consider the quantity of interest $\calQ = \|\cdot\|_{H^1(\calD)}$ to be the $H^1$-norm of the solution of a random PDE, that we introduce next. Let $(\Omega,\calA,\P)$ be a complete probability space and $\calD \subset \R^d$, $d=1,2,3$ be a bounded and connected Lipschitz domain. The linear, random elliptic PDE with solution $u: \Omega \times \calD \to \R$ is given by
\begin{equation}
    -\nabla \cdot (a(\omega,x) \nabla u(\omega,x)) = f(x) \quad \text{in} \; \Omega \times \calD,
\label{eq:PDE}
\end{equation}
where $f: \calD \to \R$ is the source term and $a : \Omega \times \calD \to \R$ is the random coefficient.
The boundary $\partial \calD$  is assumed to be Lipschitz continuous and equipped with homogeneous Dirichlet boundary conditions
\begin{equation*}
    \begin{aligned}
    u(\omega,x) &= 0 \quad \text{on} \; \Omega \times \partial \calD.
    \end{aligned}
%     \label{eq:boundary}
\end{equation*}
This is a simple mathematical model for subsurface flow through porous media and has been a common model problem in various works on uncertainty quantification (see, e.g.,\cite{Cliffe2011_MLMCPDE, Teckentrup2013_MLMCPDE, Barth2018_Elliptic}).
For simplicity we choose $\calD:=[0,1]^2$ and set $f\equiv 1$ in Equation~\eqref{eq:PDE}. 
We consider a log-Gauss random field as the random coefficient $a$ with a covariance function of the Mat\'ern class, i.e.,
\begin{equation}
  \Cov(x,y) : = v\,\frac{2^{1-\nu}}{\Gamma(\nu)} \left(\frac{\sqrt{2\nu} \, \|x - y\|_2}{\lambda}\right)^{\nu} K_\nu \left( \frac{\sqrt{2\nu} \, \|x - y\|_2}{\lambda} \right) \quadfor x,y \in \R^d,
 \label{eq:matern_kernel}
\end{equation}
where $\| \cdot \|_2$ denotes the Euclidean norm on $\calD$ and where $v > 0$ is the variance, $\lambda > 0$ the correlation length and $\nu > 0$ a parameter steering the roughness of the field. The functions $\Gamma$ and $K_\nu$ are the Gamma function and modified Bessel function, cf.~\cite{Abramowitz1964_Handbook}, respectively.
As an approximation to $a$ we consider a truncated Karhunen--Lo\`eve expansion (cf.~\cite{Alexanderian2015_KL}), for $R \in \N$
\begin{equation}
    a_R(x, \omega) = \exp\left(\sum_{m=1}^R \sqrt{\mu_m} \phi_m(x) \xi_m \right),
    \label{eq:log_gauss_coefficient}
\end{equation}
where $\xi_m \overset{d}{=} \calN(0,1)$ are standard normal-distributed random variables and the eigenvalues $\mu_m$ and eigenfunctions $\phi_m$ of the covariance kernel \eqref{eq:matern_kernel} are approximated via the Nystr\"om method (cf.~\cite{Rasmussen2007_Gaussian}), for $1 \leq m \leq R$. For the numerical discretization of the PDE \eqref{eq:PDE} we consider the Finite Element method (FE), see, e.g., \cite{Knabner2003_NumericalMethods, Hackbusch2017_EllipticEquations, Brenner2008_FEM}, with sample-dependent adaptive meshes. Exemplary visualizations of the log-Gauss random coefficient are given in Figure \ref{fig:log_gauss_coefficient} and a numerical approximation to a pathwise solution of the PDE \eqref{eq:PDE} on an adaptive mesh is given in Figure \ref{fig:log_gauss_solution}.

\begin{figure}[tbhp]
\centering
 \includegraphics[width=0.49\textwidth]{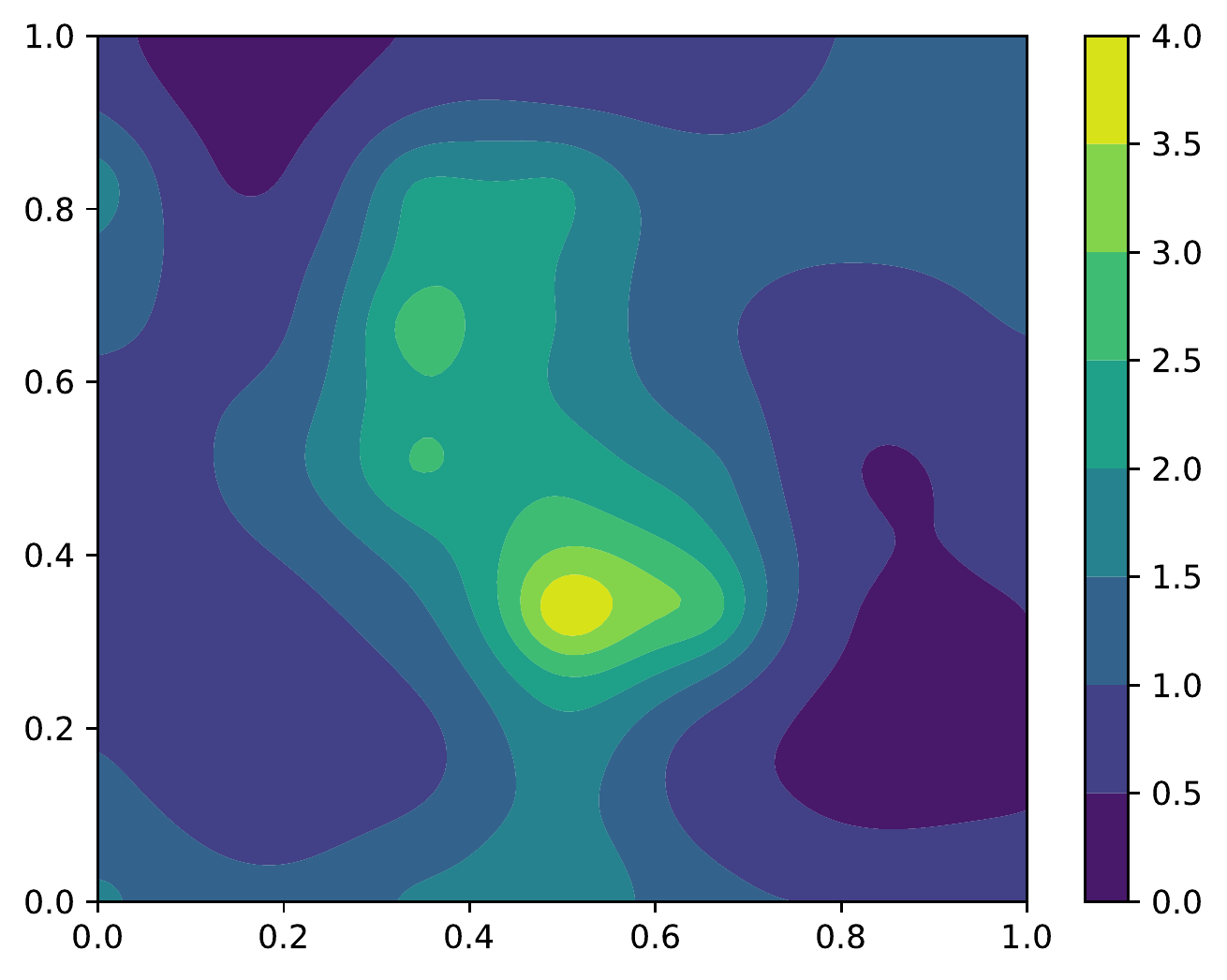}
 \includegraphics[width=0.49\textwidth]{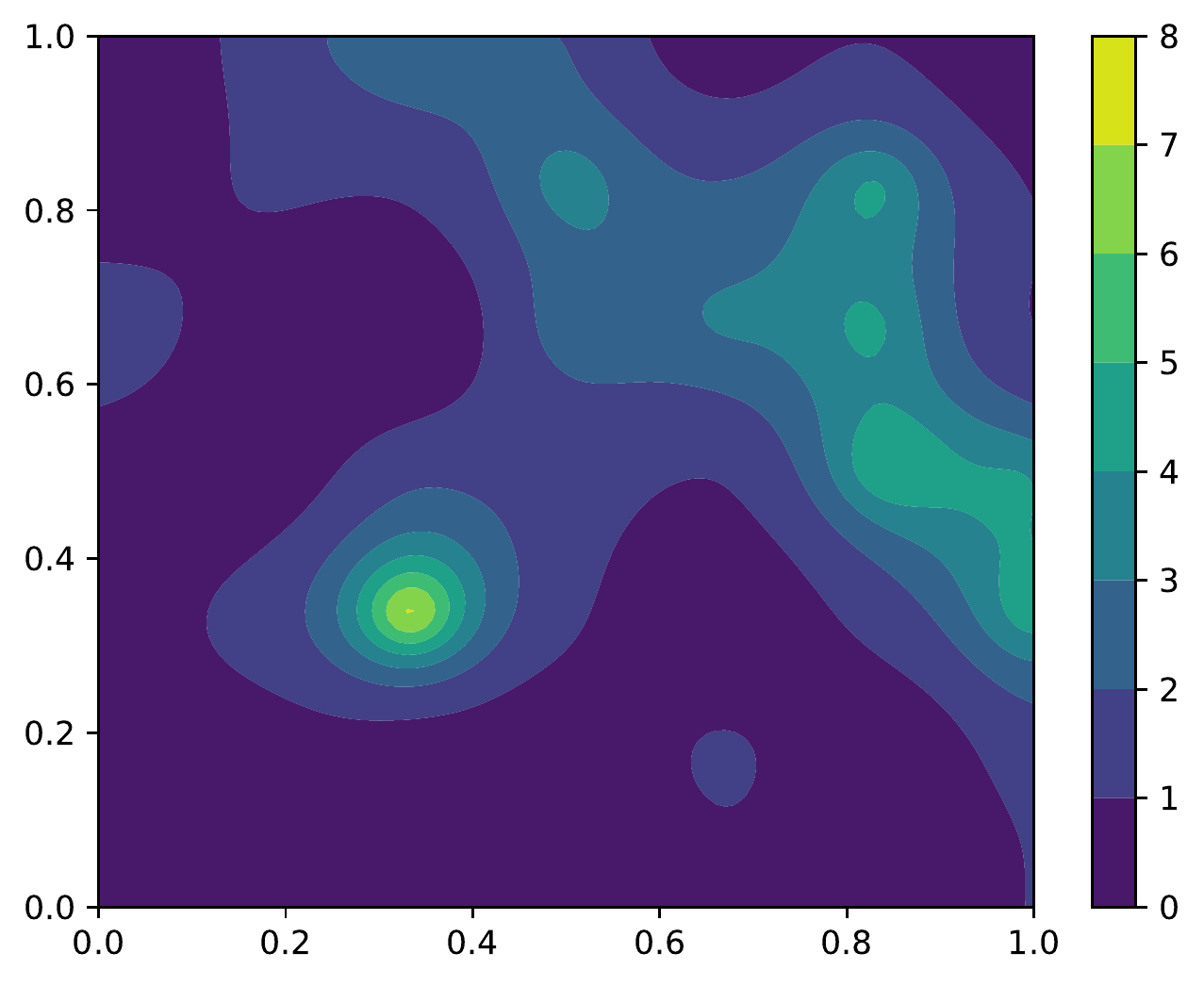}
 \includegraphics[width=0.49\textwidth]{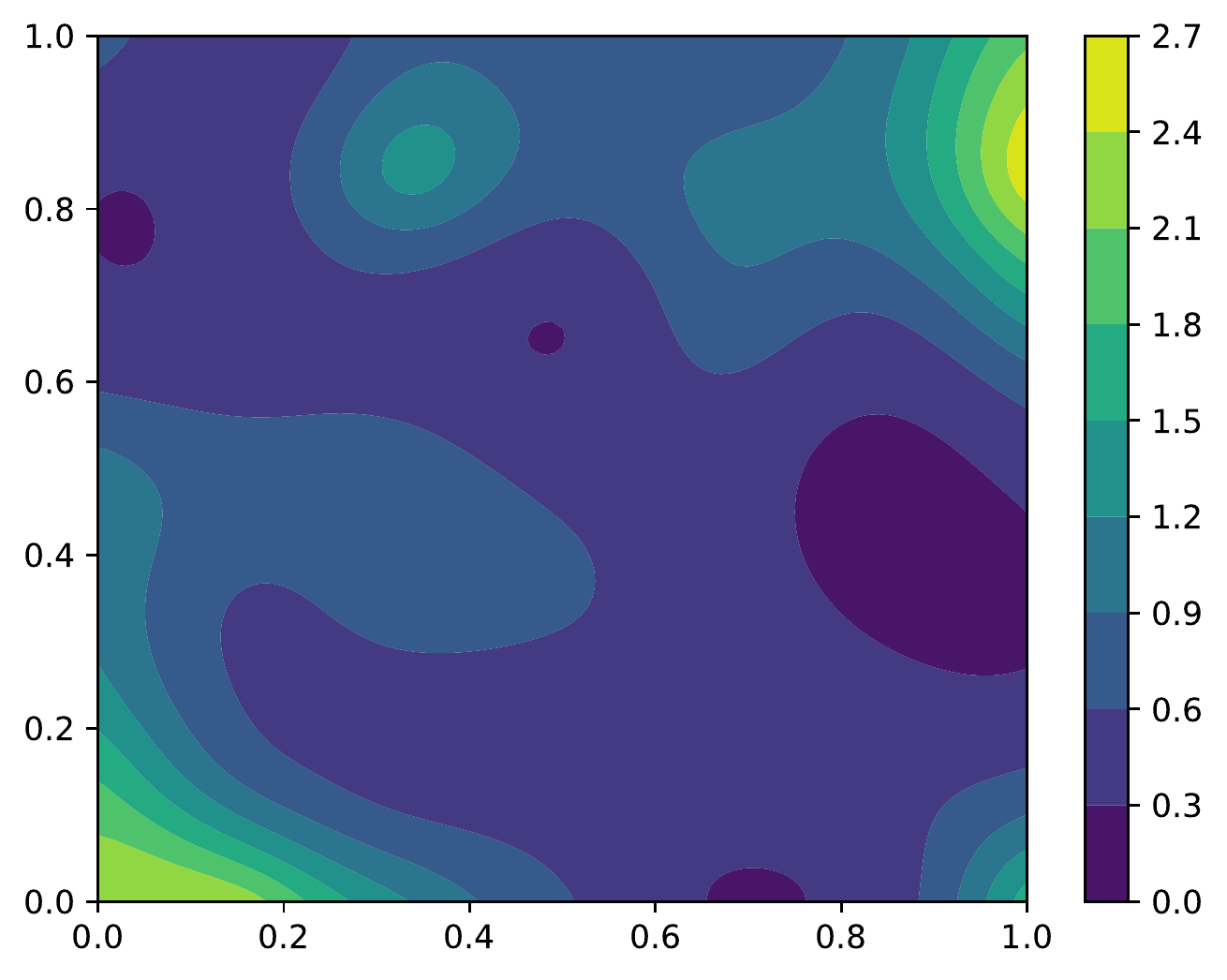}
 \includegraphics[width=0.49\textwidth]{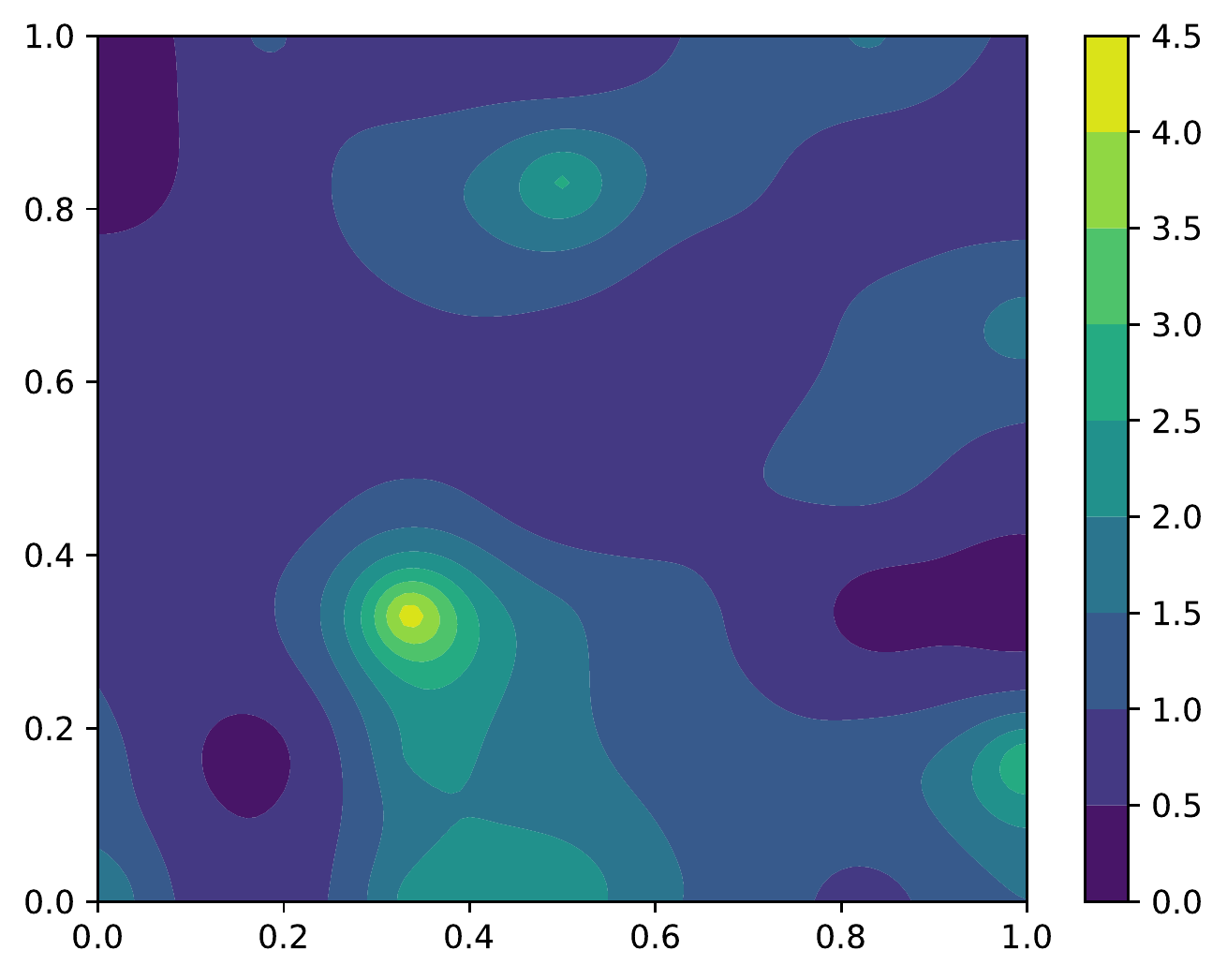}
 \caption{Visualization of samples of the log-Gauss random coefficient for $\nu=1.5$, $\lambda = 0.1$, $v=0.5$ (upper left), $\nu=1.5$, $\lambda = 0.1$, $v=1$ (upper right), $\nu=1.5$, $\lambda = 0.2$, $v=0.5$ (lower left), $\nu=1$, $\lambda = 0.1$, $v=0.5$ (lower right). The KL-expansion \eqref{eq:log_gauss_coefficient} was truncated after $R=36$ terms in each case.}
 \label{fig:log_gauss_coefficient}
\end{figure}
\begin{figure}[tbhp]
\centering
 \includegraphics[width=0.34\textwidth]{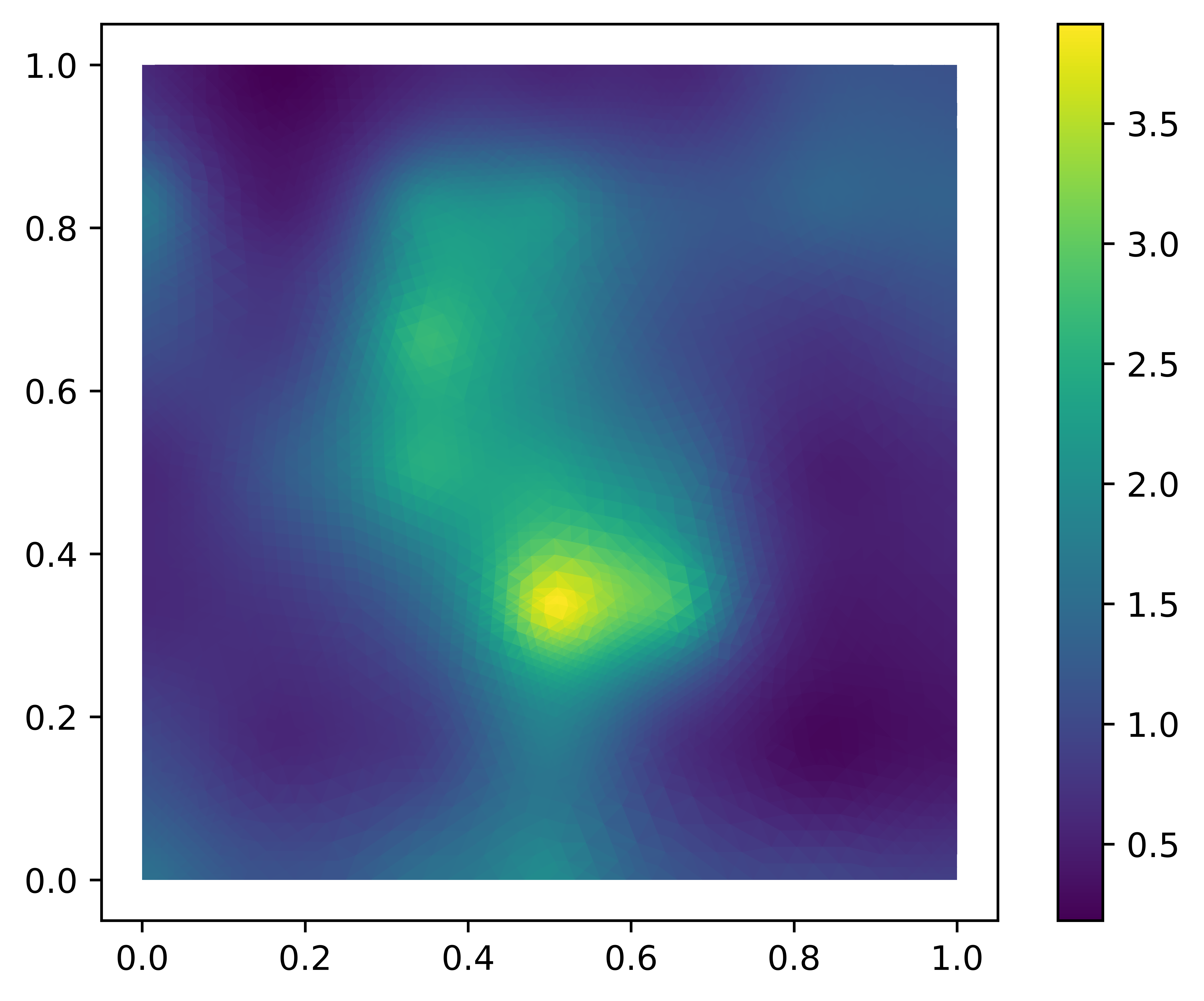}
 \includegraphics[width=0.29\textwidth]{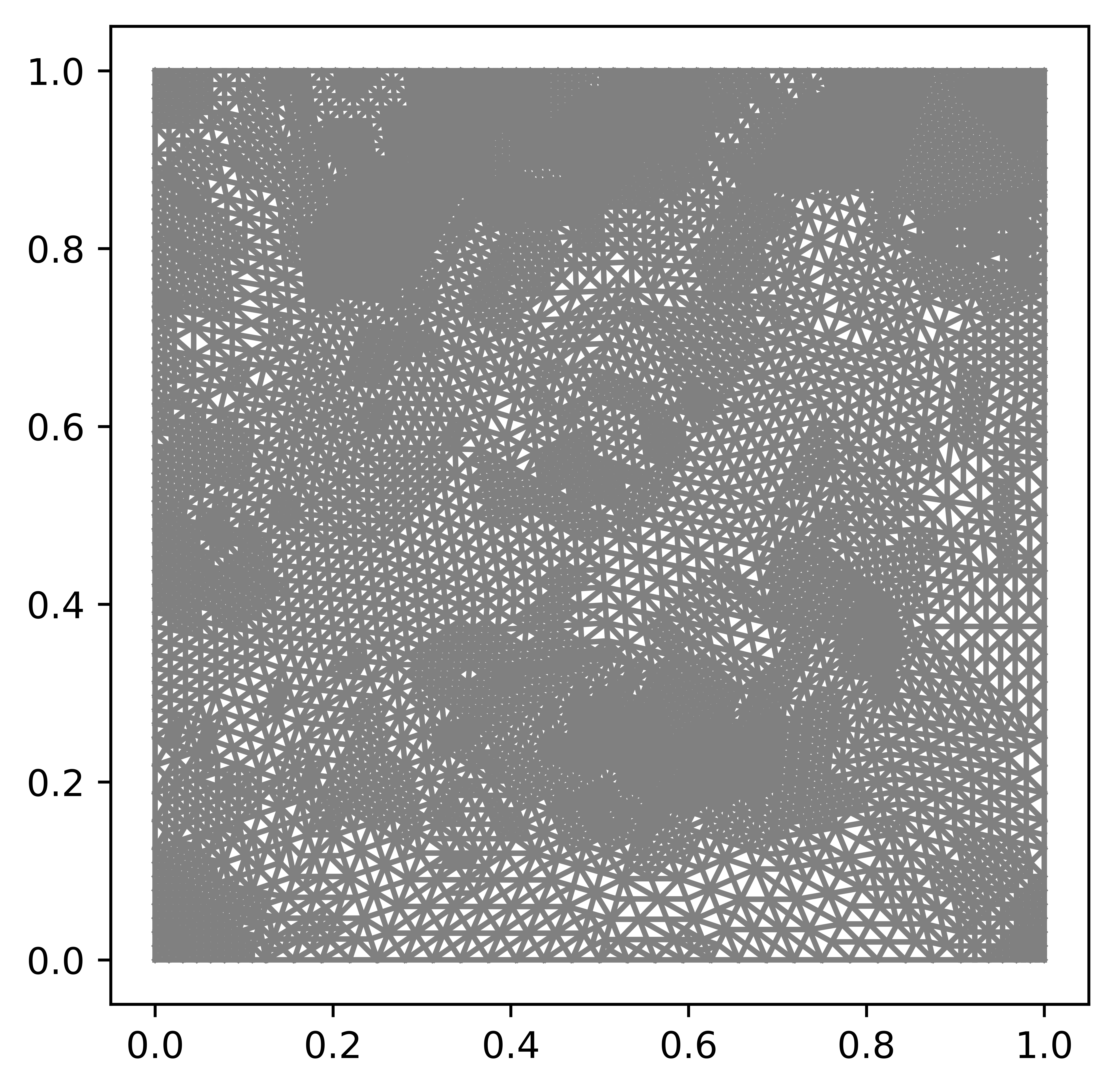}
 \includegraphics[width=0.35\textwidth]{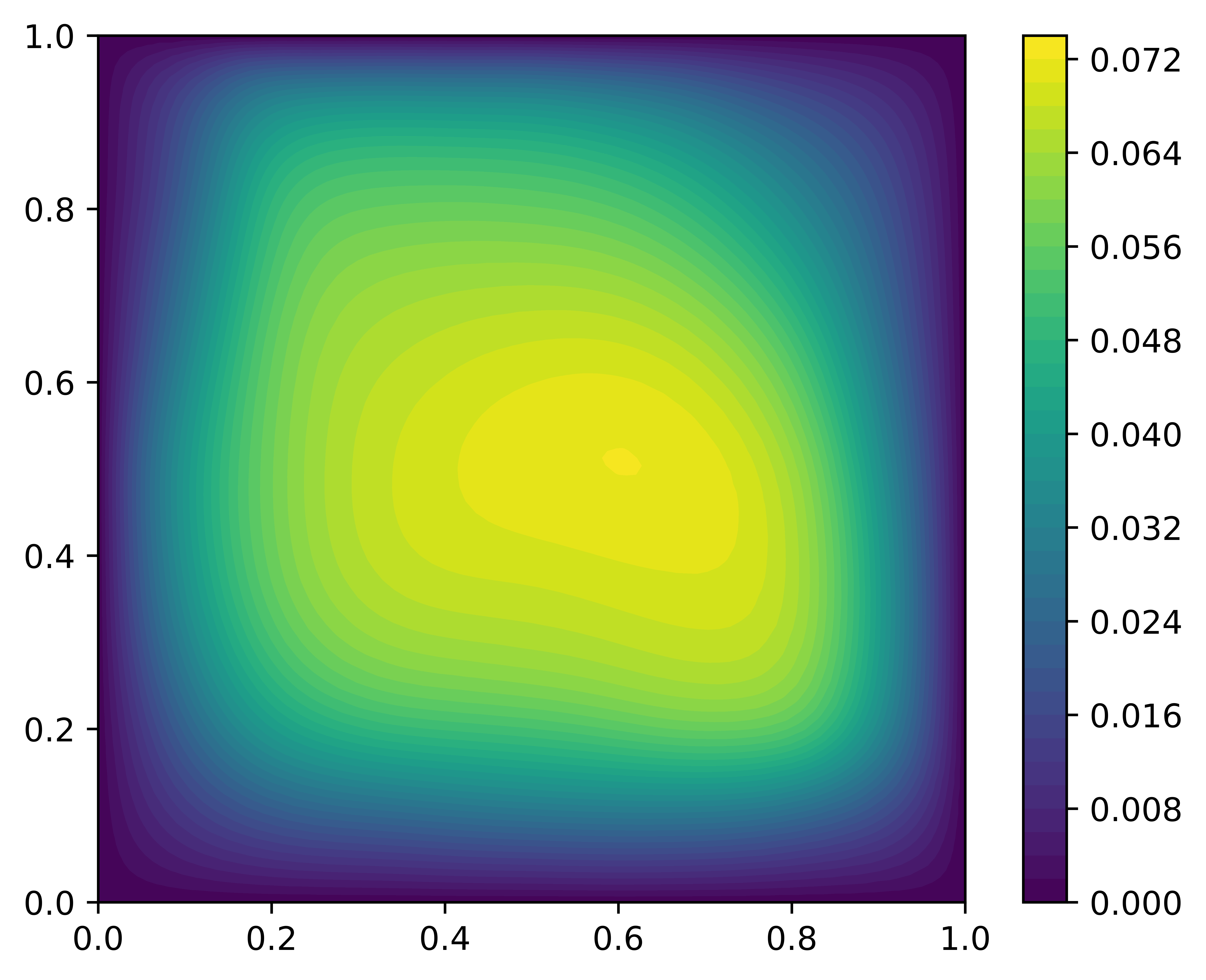}
 \caption{Single sample of the log-Gauss random coefficient for $\nu=1.5$, $\lambda = 0.1$, $v=0.5$ (left) on adaptive mesh (middle) generated by $5$ iterative refinement steps, see Remark \ref{rem:refinement}, and corresponding PDE solution (right). The KL-expansion was truncated after $R=36$ terms.}
 \label{fig:log_gauss_solution}
\end{figure}
%
% \newpage
%
\subsection{Practical estimator, a-posteriori error and parameter estimates}
\label{subsec:pratical_estimator_a_posteriori_error_and_parameter_estimates}

The continuous stochastic process $\dQl^\k$ for each sample $k \in \N$ has to be approximated in order to be computable numerically. As described in \cite{Detommaso2019_CLMC} and as done in the beginning when deriving the MLMC estimator from the CLMC estimator, a straightforward approximation is given via linear interpolation
\begin{equation*}
 \dQl^\k (\l) := \frac{Q_j^\k - Q_{j-1}^\k}{\l_{j}^\k - \l_{j-1}^\k} \quadfor \in (\l_{j-1}^\k ,\l_{j}^\k],
\end{equation*}
with samples $Q_j^\k$ as approximations to the quantity of interest at levels $\l_j^\k$ for $j \geq 1$, $k \in \N$. This is only one possible choice of many, e.g., a particular regression function or polynomial interpolant may be used to match the global trend of the process $Q(\l)$, cf.~\cite[Section~3.3]{Detommaso2019_CLMC} for details.
Inserting the linear interpolation into the QCLMC (respectively CLMC) estimator we obtain
\begin{equation}
    \widehat{Q}_{0,\Lm}^\QCLMC = \frac{1}{M} \sum_{k=1}^M \sum_{j=1}^{J^\k} \int_{\l_{j-1}^\k}^{\tilde{\l_j}^\k} \frac{1}{\P(L_r \geq \l)} \dl \; \frac{Q_j^\k - Q_{j-1}^\k}{\l_{j}^\k - \l_{j-1}^\k},
 \label{eq:qclmc_discretized}
\end{equation}
with $M \in \N$ and
\begin{equation*}
    J^\k := \min\{j \geq 1: \l_j^\k \geq L_r^\k \land \Lm\}, \quad \tilde{\l}_j^\k := \l_j^\k \land L_r^\k \land \Lm.
    \label{eq:qclmc_max_level_index}
\end{equation*}
Different to the derivation of the MLMC estimator in Equation \eqref{eq:CLMC_discretized}, the level random variable $L_r$ is exponentially-distributed to some parameter $r > 0$ and the integral in the estimator \eqref{eq:qclmc_discretized} computes to
\begin{equation*}
 \int_{\l_{j-1}^\k}^{\tilde{\l_j}^\k} \frac{1}{\P(L_r \geq \l)} \dl = \int_{\l_{j-1}^\k}^{\tilde{\l_j}^\k} e^{r\l} \dl = \frac{\exp(r \tilde{\l}_j^\k) - \exp(r \l_{j-1}^\k)}r,
 \label{eq:qclmc_weights}
\end{equation*}
and the practical (Q)CLMC estimator is given by
\begin{equation*}
    \widehat{Q}_{0,\Lm}^\QCLMC = \frac{1}{M} \sum_{k=1}^M \sum_{j=1}^{J^\k} \frac{\exp(r \tilde{\l}_j^\k) - \exp(r \l_{j-1}^\k)}{r\left(\l_{j}^\k - \l_{j-1}^\k\right)} \left(Q_j^\k - Q_{j-1}^\k\right).
\end{equation*}
The samplewise continuous level of refinement for each sample $k$ is defined by
% \begin{equation}
    $\l_j^\k := - \log (e_j^\k/e_0^\k )$ for $j=0,\dots,J$, for $J \in \N$ and $k=1,\dots,M$,
%     \label{eq:CLMC-level}
% \end{equation}
naturally providing values $\l_0^\k = 0$ for all $k = 1,\dots,M$. In order to use sample adaptive meshes in the (Q)CLMC method, we use a standard energy norm error estimator for each sample $k \in \N$ (we refer to~\cite{Graetsch2005_APostError} for an overview of different a-posteriori error estimation techniques).
The values $(e_j^\k; j=0,\dots,J)$ are computable a-posteriori error estimators that satisfy
\begin{equation}
    \left| \calQ^\k - Q_j^\k \right| = \left| \| u^\k \|_{H^1(\calD)} - \|u_{j}^\k\|_{H^1(\calD)}\right| \leq C_{est}^\k \left(\sum_{K \in \calK_j^\k} \left( \varphi_K^\k \right)^2 \right)^\frac{1}{2} =: e_j^\k,
    \label{eq:error_estimator}
\end{equation}
for each sample $k \in \N$ with a constant $C_{est}^\k > 0$ independent of $u^\k$ and the FE approximation $u_{j}^\k$. The elementwise error indicator $\varphi_K^\k$ is  given by the formula
\begin{equation*}
 \left(\varphi_K^\k \right)^2 =  h_K^2 \,\|f_j + \div (a_j \nb u_j)\|_{L^2(K)}^2 + \frac{1}{2}\sum_{\ga  \in \calE_K} h_\ga \|[\vv{n}_\ga|_K \cd (a_j \nabla u_{j})]_{\ga}\|_{L^2(\ga)}^2,
 \label{eq:a-posteriori_error}
\end{equation*}
where we omitted the dependence of the right hand side terms on $k$ for a better readability. The quantities $f_j$ and $a_j$ are approximations to $f$ and $a$, $h_K$ and $h_\ga$ are the element diameter and edge length of element $K$ and edge $\ga$ and $\vv{n}_\ga$ is the outward pointing unit normal vector to edge $\ga$. Further, $\calE_K$ is the collection of all edges of element $K \in \calK_j^\k$ and $[\cdot]_{\ga}$ denotes the jump of a quantity over the edge $\ga$. Details on the derivation of the estimator are found in, e.g., \cite{Graetsch2005_APostError,Ainsworth1997_APostError}.

As in~\cite{BeschleBarth2023_QCLMC}, we estimate the underlying model parameters and constants for CLMC and QCLMC, $\al, \be, \ga, c_1,c_2,c_3$ from Theorem \ref{thm:QCLMC-complexity} numerically, since for the considered model problem and various real-world applications they are not available theoretically. We apply the natural logarithm to Equations \eqref{eq:QCLMC_meandecay}, \eqref{eq:QCLMC_variancedecay} and \eqref{eq:QCLMC_costincrease} to obtain the linear relationships
$
%\begin{equation*}
    \ln(\E [\scalebox{0.8}{$\dQl$}]) \leq \tilde{c}_1 -\al \l,$ $\ln(\V [\scalebox{0.8}{$\dQl$}]) \leq \tilde{c}_2 - \be \l$ and
    $\ln(\frac{\text{d}\calC}{\dl}) \leq  \tilde{c}_3 + \ga \l,
% \end{equation*}
$
where $\tilde{c}_i = \ln(c_i)$ for $i=1,2,3$.  
% \sout{Each sample $k=1,\dots,M$ with $M \in \N$ provides different values for the levels $\l_j^\k$ in the quotient {$({\textup{d}Q}/{\dl})^\k (\l) := ({Q_j^\k - Q_{j-1}^\k})/(\l_{j}^\k - \l_{j-1}^\k)$} for $\l \in (\l_{j-1} ,\l_{j})$ and $j=1,\dots,J$. Thus, we define a common level domain {$[\max_k{\l_1^\k}, \min_k{\l_J}]$} and interpolate the computed quotient accordingly. Then, the mean, variance and cost quantities are estimated by sample averages over this common level domain.} 
Using the definition for $\dQl$ from above and a similar definition for $\frac{\text{d}\calC}{\dl}$, the mean, variance and cost quantities are estimated by sample averages at refinement steps $j = 0,...,J$ with corresponding approximations to the levels $\l_j \approx \frac{1}{M}\sum_{k=1}^M \l_j^\k$. Finally, the parameters and constants from the linear relationships are obtained by linear fitting.
\begin{remark}
 \label{rem:refinement}
 
  The adaptive refinement procedure (throughout this work) of $J \in \N$ refinement steps is the classical Dörfler marking strategy from \cite{Doerfler1996_AdaptiveAlgorithm}. Starting on an initial unstructured uniform mesh, all elements that exceed $50\%$ of the total a-posteriori error bound according to Equation~\eqref{eq:error_estimator} are refined in each step, i.e. for $j=0,\dots,J-1$.
 
\end{remark}
\begin{table}[h]
\begin{tabular}{|l|l|l|l|l|l|l|l|}
\hline
   Mat\'ern parameters       & $c_1$ & $\al$  & $c_1^2$ & $c_2$ & $\be$ & $\ga$ & r \\ \hline
$\nu=1$, $\lambda = 0.1$, $v=0.5$  & 5.21e-02 & 1.85 & 2.72e-03 & 4.13e-04 & 3.69   & 1.83 & 2.76 \\ \hline
$\nu=1.5$, $\lambda = 0.1$, $v=0.5$   & 5.52e-02 & 1.84 & 3.05e-03 & 5.13e-04 & 3.69 & 1.8 & 2.74 \\ \hline
$\nu=1.5$, $\lambda = 0.2$, $v=0.5$  &  5.84e-02 & 1.86 &  3.42e-03 & 9.67e-04 & 3.73  &  1.79 &  2.76  \\ \hline
$\nu=1.5$, $\lambda = 0.1$, $v=1$  & 9.14e-02 & 1.71 & 8.36e-03 & 1.98e-03 & 3.39   & 1.78 & 2.59 \\ \hline
\end{tabular}
\caption{Estimates for the parameters from Equations \eqref{eq:QCLMC_meandecay}, \eqref{eq:QCLMC_variancedecay} and \eqref{eq:QCLMC_costincrease} for different values of the hyperparameters $\nu, \lambda$ and $v$ of the log-Gauss random coefficient \eqref{eq:matern_kernel} for $M=500$ independent PDE samples, generated by a pseudo-random number generator. The error estimator for the adaptive refinement procedure is defined via Equation \eqref{eq:error_estimator} and $J = 11$ adaptive refinement steps are used, see Remark \ref{rem:refinement}}
\label{tab:parameter_estimates_log_gauss}
\end{table}
\subsection{Comparison of upper bounds to the MSE}
\label{subsec:comparison_of_upper_bounds_to_the_mse}
We compare the theoretical performance of QCLMC and unbiased CLMC (cf.~\cite{Detommaso2019_CLMC, BeschleBarth2023_QCLMC}) based on the derived upper bounds to the MSE from the proofs of their respective complexity theorems. The parameter estimates given in Table~\ref{tab:parameter_estimates_log_gauss} provide the convergence regime. All experiments share the properties $\ga \approx 2$ as opposed to $\ga \approx 1$, which is usually expected by an optimal direct solver to solve a $2$-dimensional PDE problem. But as indicated in Remark \ref{rem:adapted_cost_growth_assumption} the rate $\ga$ in Equation \eqref{eq:QCLMC_costincrease} scales with the average growth of the computed levels $\l_j^\k$. The a-posteriori error estimator from Equation \eqref{eq:error_estimator} is actually an upper bound to the samplewise strong error $\|u^\k - u_j^\k\|_{H^1(\calD)}$ and hence converges with halve the rate as the samplewise weak error $|\|u^\k\|_{H^1(\calD)} - \|u_j^\k\|_{H^1(\calD)}|$ leading to a decreased growth of the levels $\l_j^\k$ for each sample $k \in \N$ over the refinements $j \in \N$. Overall, it still holds $\gamma < \min\{\be, 2\al\}$, since $\al$ and $\be$ are scaled the same way. Further, for all upcoming numerical experiments the truncation index for the KL expansion \eqref{eq:log_gauss_coefficient} is $R=36$ and we choose $r = ({\ga + \min\{\be, 2\al\}})/{2}$ to satisfy the assumption on $r$ for QCLMC and CLMC from their complexity theorems. 
% \sout{Note, that in contrast to MLMC, the convergence regime may be steered by the choice of $r$ independent of the parameters $\al,\be,\ga$ and is not solely dictated by $\al, \be, \ga$ themselves}. 
Next, we state the bias and variance bounds from the proof of the complexity theorem for QCLMC in dependence of the problem parameters $\al, \be, c_1, c_2, r$.
The bias is given by
\begin{equation}
 \begin{aligned}
\left|\E[\widehat{Q}_{0, \Lm}^\QCLMC] - (\calQ - Q(0))\right| \leq & \  \frac{c_{disc} c_1}{M} \frac{1}{r-\al}\left(e^{(r-\al)(\Lm \land \bar{L})} -1 \right) + \frac{c_1}{\al} e^{-\al (\Lm \land \bar{L})} \\
= & \ \text{discrepancy bias term} + \text{standard bias term},
 \end{aligned}
 \label{eq:bias_QCLMC}
\end{equation}
where the first term stands for the additional bias introduced by the $F$-discrepancy of the quasi-random sequence (see Lemma~\ref{lma:inverse_transform_and_convergence}). The variance is bounded by
\begin{equation}
 \begin{aligned}
 \V\left[\widehat{Q}_{0, \Lm}^\QCLMC\right] \leq & \ \frac{c_{disc} c_2}{M^2} \left(\frac{4}{(2r - \be)^2} \left(e^{(2r-\be)(\Lm \land \bar{L})} - e^{(r-\frac{\be}{2})(\Lm \land \bar{L})} + 1\right)\right) \\
 & \ + \frac{c_2}{M}  \left(\frac{2}{(r-\be) (r - \frac{\be}{2})} e^{(r-\be) (\Lm \land \bar{L})} +  \frac{4}{\be(r - \frac{\be}{2})} e^{- \frac{\be}{2} (\Lm \land \bar{L})} +  \frac{4}{(\be - r) \be} \right) \\
 = & \ \text{discrepancy variance term} + \text{variance convergence term},
 \end{aligned}
 \label{eq:var_QCLMC}
\end{equation}
where we see the split in an additional term introduced by the $F$-discrepancy of the quasi-random sequence and the term stemming from the assumption on the convergence of the variance decay in the complexity theorem  (see~\eqref{eq:QCLMC_variancedecay}).
The MSE, see Equation \eqref{eq:MSE}, of QCLMC consists of the variance and the squared bias of the estimator
\begin{equation}
 \MSE^\QCLMC = \V\left[\widehat{Q}_{0, \infty}^\QCLMC\right] + \left|\E[\widehat{Q}_{0, \infty}^\QCLMC - (\calQ - Q(0))]\right|^2,
 \label{eq:mse_QCLMC}
\end{equation}
and it is bounded by the respective variance upper bound (see~\eqref{eq:var_QCLMC}) and bias upper bound (see~\eqref{eq:bias_QCLMC}).
For CLMC the bias is bounded by, cf.~\cite{Detommaso2019_CLMC, BeschleBarth2023_QCLMC},
\begin{equation}
 \begin{aligned}
\left|\E[\widehat{Q}_{0, \Lm}^\CLMC - (\calQ - Q(0)]\right| = \left|\E[\calQ - Q(\Lm)]\right| \leq & \ \frac{c_1}{\al} e^{-\al \Lm} \\
= & \ \text{standard bias term},
 \end{aligned}
 \label{eq:bias_CLMC}
\end{equation}
which vanishes in the case $\Lm=\infty$. For the variance we have 
\begin{equation}
 \begin{aligned}
 \V & \left[\widehat{Q}_{0, \Lm}^\CLMC\right]   \leq\frac{c_2}{M} \left[\frac{1}{(r - \frac{\be}{2})^2} \left(\frac{2r - \be}{r - \be} e^{(r - \be) \Lm}  + \frac{4 r - 2\be}{\be} e^{-\frac{\be}{2} \Lm} \right) +  \frac{4}{(\be - r)\be} \right]\\
  & \ + \frac{c_1^2}{M} \left[\frac{1}{(r-\al)^2} \left(\ \frac{2r - 2\al}{r - 2\al} e^{(r - 2\al)\Lm}  - \frac{(r-\al)^2}{\al^2}e^{-2\al \Lm}  +  \frac{2 r^2 - 2 r\al}{\al^2}e^{-\al \Lm}\right) + \frac{r}{(2\al - r)\al^2}  \right], 
 \end{aligned}
  \label{eq:var_CLMC}
\end{equation}
which in the case $\Lm=\infty$ and $r < \min\{\be,2\al\}$ boils down to
\begin{equation}
 \begin{aligned}
 \V\left[\widehat{Q}_{0, \infty}^\CLMC\right] \leq & \ \frac{1}{M}\frac{4c_2}{(\be - r)\be} +  \frac{1}{M}\frac{c_1^2 r}{(2\al - r)\al^2} \\
 = & \ \text{variance convergence term} + \text{bias convergence term}.
 \end{aligned}
 \label{eq:var_CLMC_infty}
\end{equation}
Here, the first term corresponds to the assumption on the variance decay and the second to the assumption on the bias decay, see~\eqref{eq:CLMCassumptions}.
The MSE of CLMC is given by
\begin{equation}
 \MSE^\CLMC = \begin{cases}
                \V\left[\widehat{Q}_{0, \Lm}^\CLMC\right] + \left|\E[\calQ - Q(\Lm)]\right|^2 &\quadfor \Lm \neq \infty,\\
                \vspace{0.01cm}\\
                \V\left[\widehat{Q}_{0, \infty}^\CLMC\right] &\quadfor \Lm = \infty,
              \end{cases}
 \label{eq:mse_CLMC}
\end{equation}
which is bounded in the respective case by the variance upper bound (see~\eqref{eq:var_CLMC} and \eqref{eq:var_CLMC_infty}) and bias upper bound (see~\eqref{eq:bias_CLMC}).
For our first performance comparison of both methods, we compare the upper bounds to the MSE based on the parameter estimates for the different hyperparameter settings for the log-Gauss PDE coefficient as listed in Table \ref{tab:parameter_estimates_log_gauss} for a range of sample sizes $M = 16 \cdot 2^i$ for $i=0,1,\dots,9$.
For CLMC we set $\Lm = \infty$ resulting in the unbiased version.
 For QCLMC we set $\Lm = \infty$ as well and thus the upper bounds for the bias and variance are independent of $\Lm$, but depend on $\bar{L}$. For QCLMC we compute for each hyperparameter setting in the PDE coefficient an average of the upper bounds over $100$ independent runs of a quasi-random sequence $L_r^\k$ yielding different values for $\bar{L}$ in each run.
 The independence of the quasi-random sequence in QCLMC was realized by Owen scrambling (see~\cite{Owen1995_Scrambling, Owen1998_Scrambling}) of a Sobol sequence.
 For QCLMC the maximal generated level $\bar{L}$ in each run, see Figure \ref{fig:L_max_and_bias_QCLMC} (left), dictates the bias, illustrated in Figure \ref{fig:L_max_and_bias_QCLMC} (right). We see that the additional bias introduced by the quasi-random sequence is significant, but decays faster than $M^{-\frac{1}{2}}$ resulting in a diminishing contribution to the upper bound to the MSE by the squared bias, emphasizing the automatic compensation of the bias error by $\bar{L}$ as described in Remark \ref{rem:complexity_L_infty}. The upper bounds to the variances of both methods are given in Figure \ref{fig:variance_and_MSE_CLMC_QCLMC} (left), where we observe that QCLMC achieves a much lower upper bound to the variance than CLMC. The upper bound to the variance of CLMC is dominated by the bias convergence term. The discrepancy variance term decays with at a faster rate and is therefore not dominant in the QCLMC estimate. This means that the upper bound to the variance of the QCLMC estimator is essentially only the variance convergence term, whereas the CLMC estimator is dominated by the bias convergence term. The resulting upper bound to the MSE for both methods is shown in Figure \ref{fig:variance_and_MSE_CLMC_QCLMC} (right). We observe a smaller upper bound to the MSE for QCLMC in comparison to CLMC, as a direct consequence of the variance reduction and the natural bias compensation. 
%  \sout{The bias of QCLMC does not negatively effect the convergence of the MSE.} 
 In Figures~\ref{fig:bias_variance_and_MSE_CLMC_QCLMC_1} to~\ref{fig:bias_variance_and_MSE_CLMC_QCLMC_3} we see similar effects. The constant in the upper bound to the variance of QCLMC is influenced by the constant $c_2$ from the variance decay assumption \eqref{eq:QCLMC_variancedecay}, where the upper bound to the variance of CLMC is influenced by both $c_2$ and $c_1^2$ from the bias decay assumption \eqref{eq:QCLMC_meandecay} and both converge in $M$ with rate one. The constant $c_1$ enters in QCLMC only in the upper bound to the bias, and the squared bias converges like $M^{-0.66 \cd 2} < M^{-1}$ for the given examples. This means that for a larger quotient of the constants $\frac{c_1^2}{c_2}$ given in Table~\ref{tab:parameter_estimates_log_gauss} we see a better result for QCLMC compared to CLMC. This behaviour may be explained, since $c_2 > c_1^2$ resembles a high variance of the problem relative to the squared bias, leading to large sample sizes necessary to reduce the variance of the estimator and the effect of accurately sampling the level distribution $L_r$ by fewer samples becomes less significant.
% 
%%%%%%%%%%%%%%%%%%%%%%%%%%%%%%%%%%%%%%%%%%%%%%%%%%%%%%%%%%%%%
%%%%%%%%%%%%%%%%%%%%%%%%%%%%%%%%%%%%%%%%%%%%%%%%%%%%%%%%%%%%%
%%%%%%%%%%%%%%%%%%%%%%%%%%%%%%%%%%%%%%%%%%%%%%%%%%%%%%%%%%%%%
% 
\begin{figure}[tbhp]
    \includegraphics[width=0.49\textwidth]{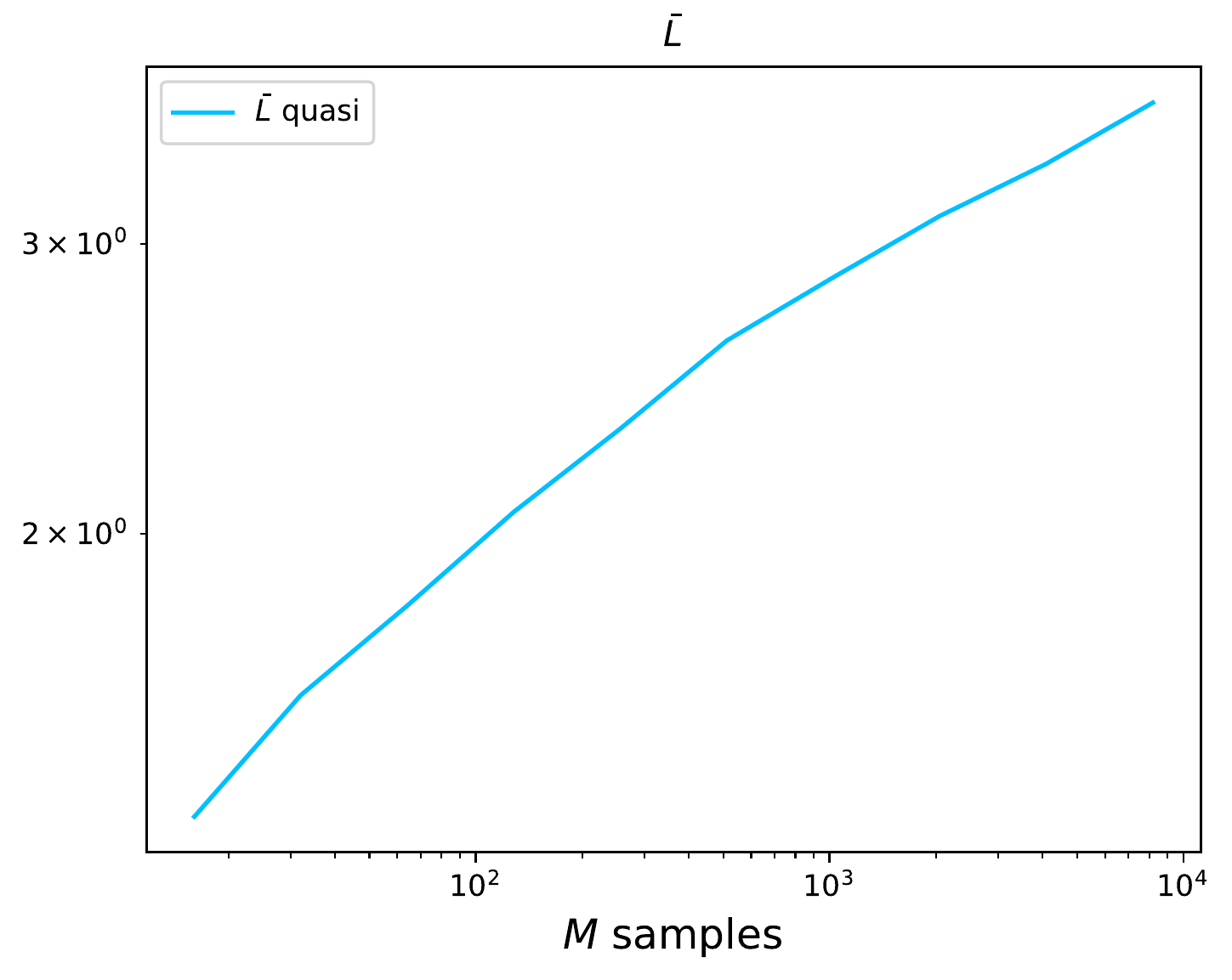}
    \includegraphics[width=0.49\textwidth]{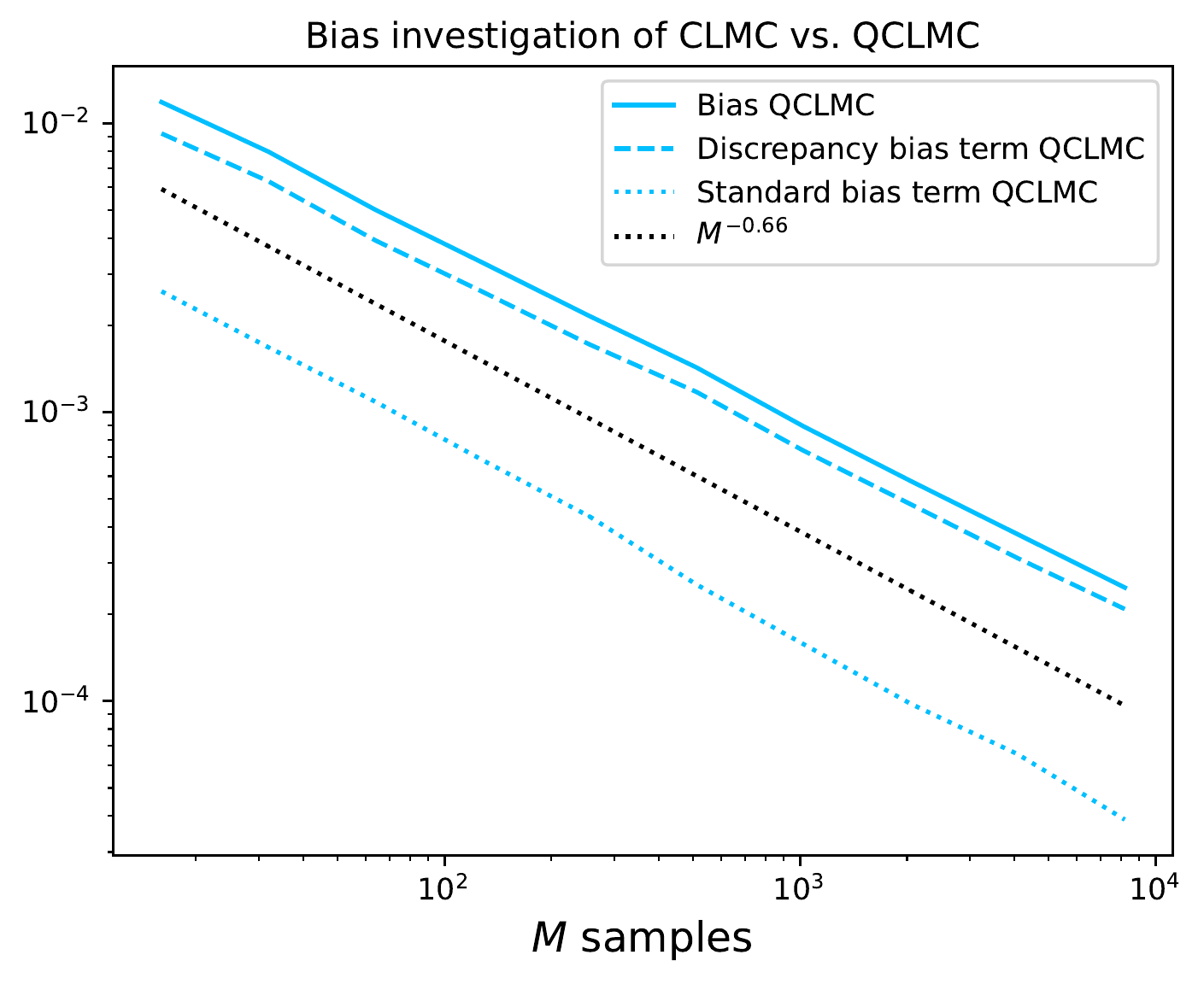}
  \caption{Mean of maximal levels $\bar{L}$ generated by quasi-random Sobol sequence for $M$ samples (left) and corresponding mean of the upper bound to the bias of QCLMC (right) over $100$ independent runs realized via Owen Scrambling.}
\label{fig:L_max_and_bias_QCLMC}
\end{figure}
\begin{figure}[tbhp]
    \includegraphics[width=0.49\textwidth]{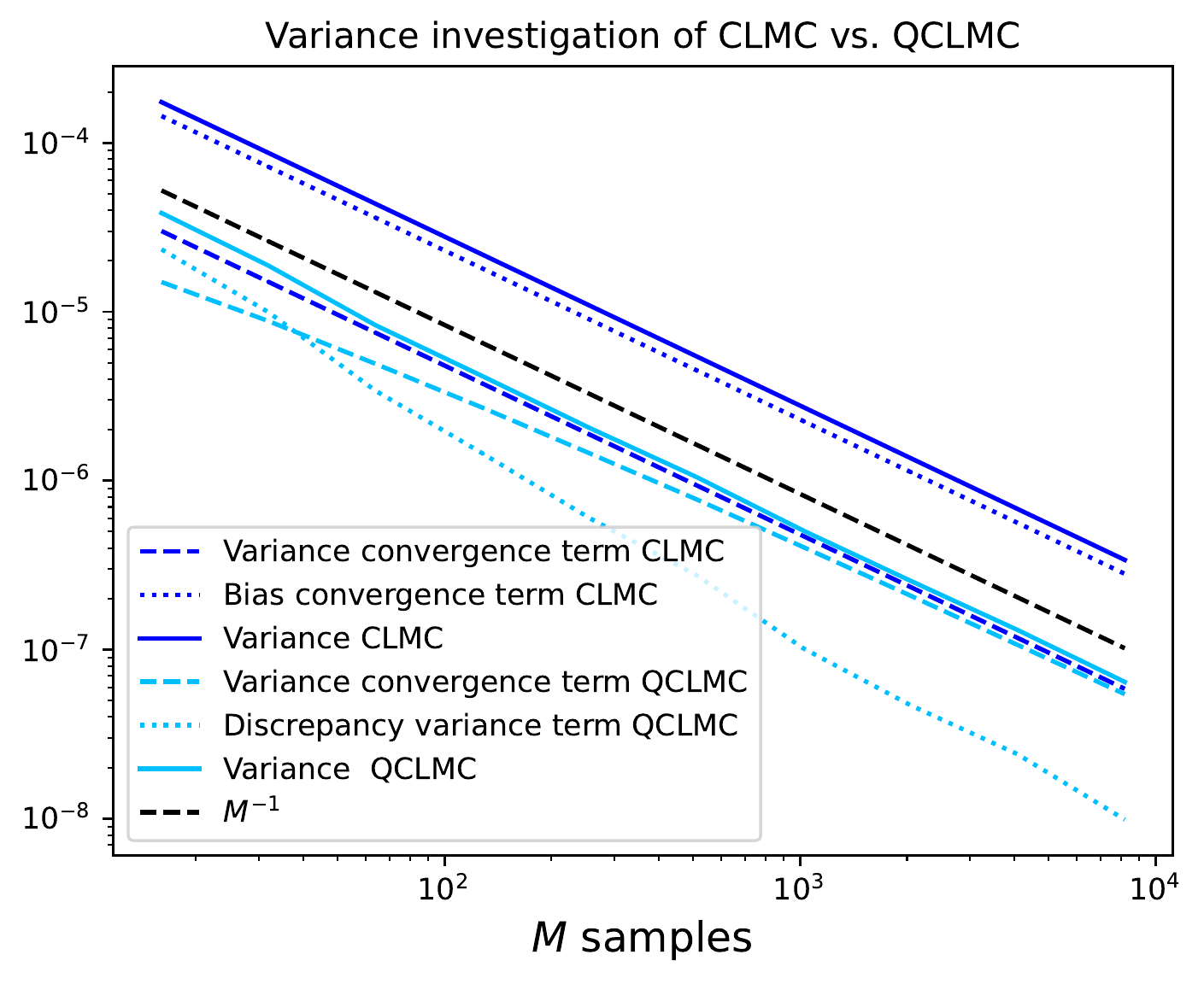}
    \includegraphics[width=0.49\textwidth]{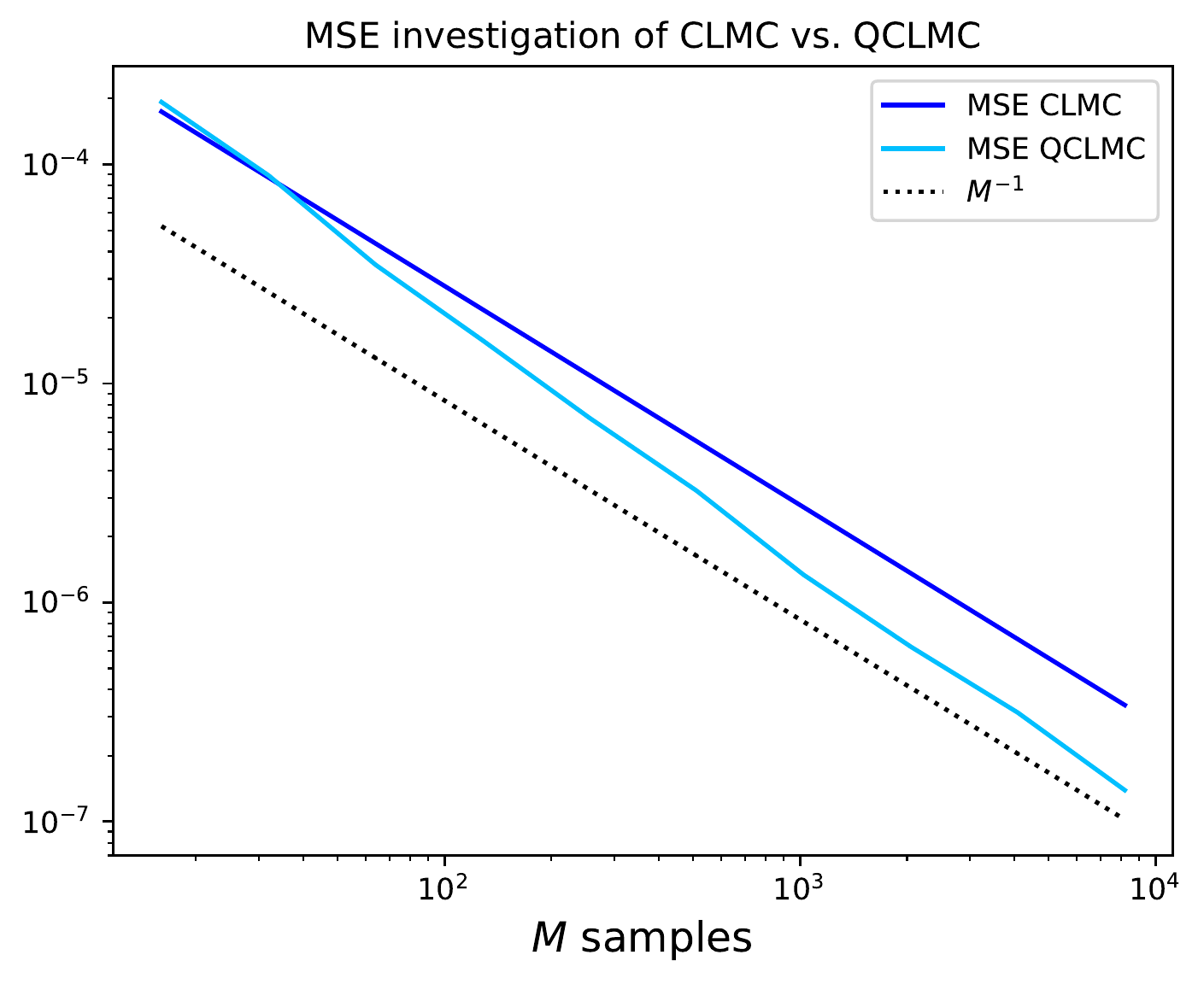}
\caption{Upper bounds to the variance (left) and MSE (right) for CLMC and QCLMC estimated over $100$ independent runs. Hyperparameters for log-Gauss field \eqref{eq:matern_kernel}: $\nu=1$, $\lambda = 0.1$, $v=0.5$.}
\label{fig:variance_and_MSE_CLMC_QCLMC}
\end{figure}

\begin{figure}[tbhp]
    \includegraphics[width=0.32\textwidth]{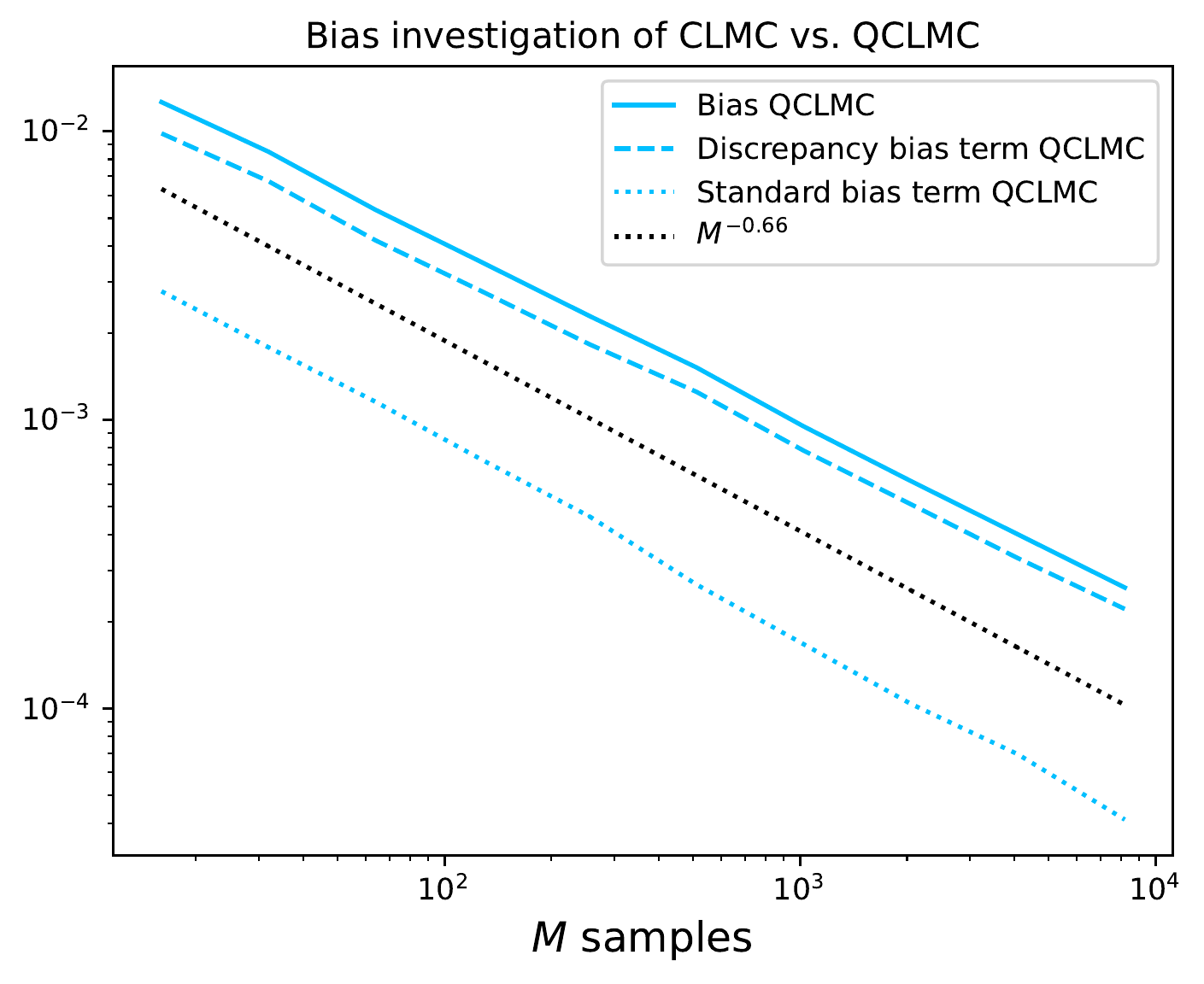}
    \includegraphics[width=0.32\textwidth]{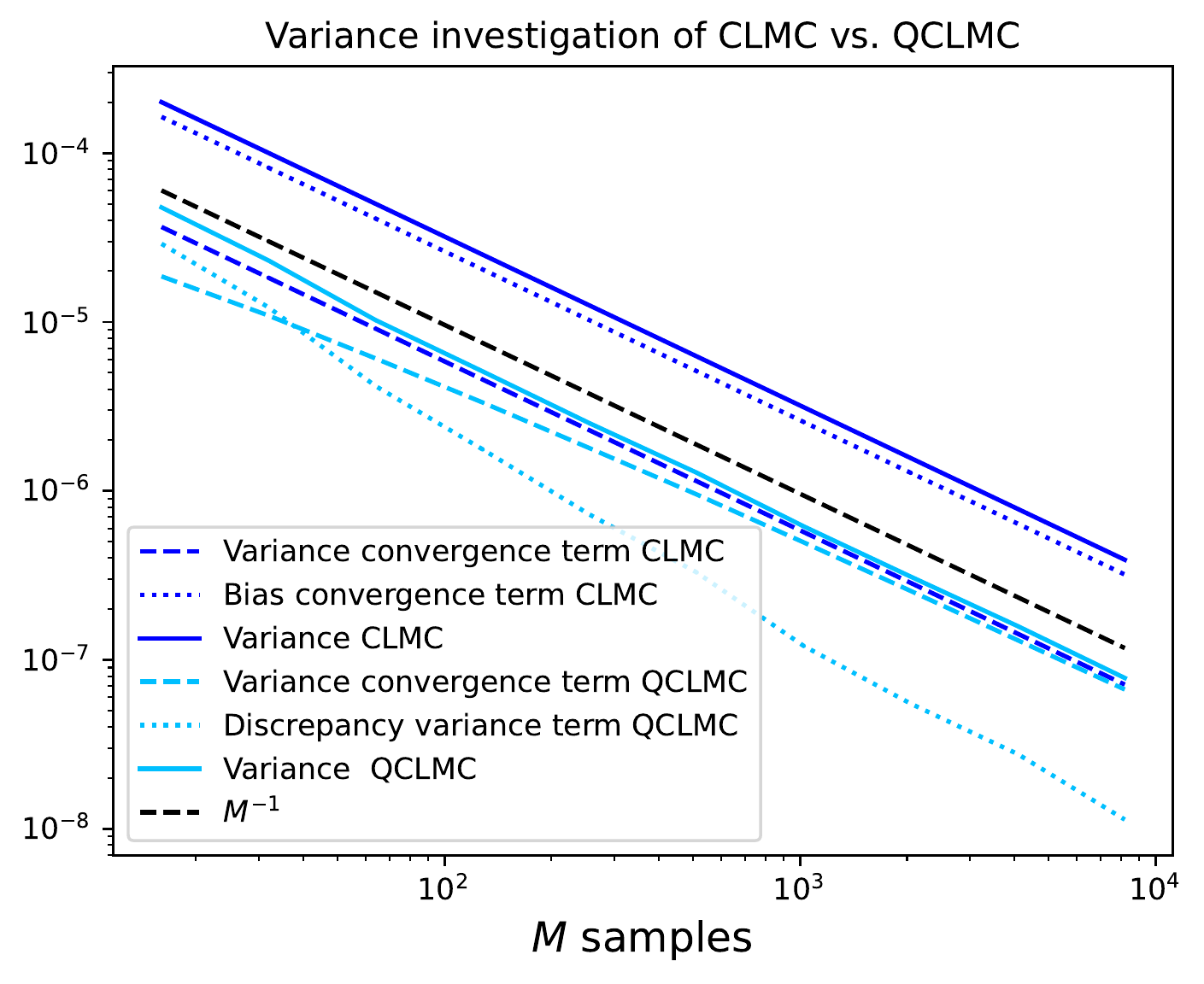}
    \includegraphics[width=0.32\textwidth]{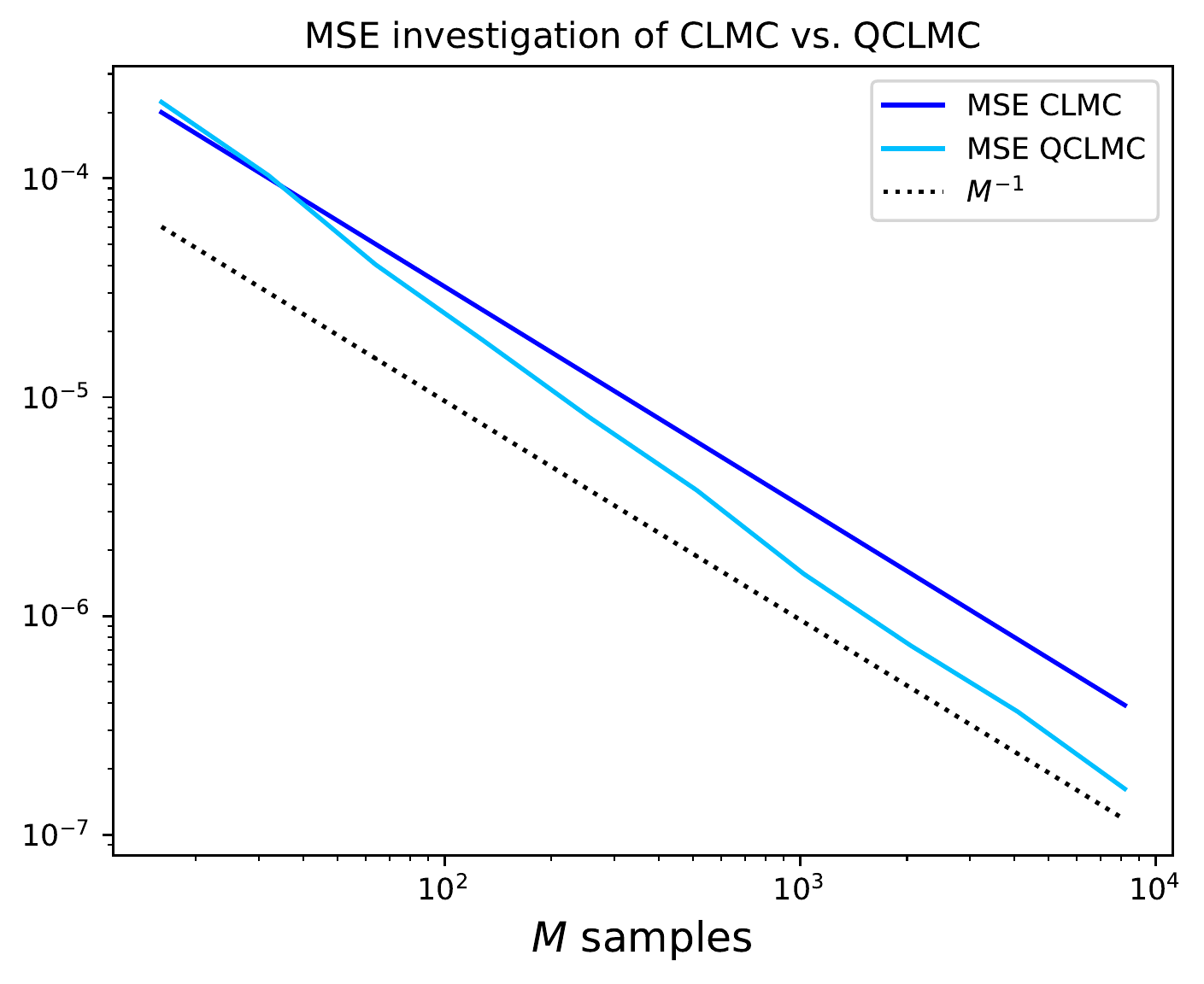}
\caption{Upper bounds to the bias (left), variance (middle) and MSE (right) for CLMC and QCLMC estimated over $100$ independent runs. Hyperparameters for log-Gauss field \eqref{eq:matern_kernel}: $\nu=1.5$, $\lambda = 0.1$, $v=0.5$.}
\label{fig:bias_variance_and_MSE_CLMC_QCLMC_1}
\end{figure}

\begin{figure}[tbhp]
    \includegraphics[width=0.32\textwidth]{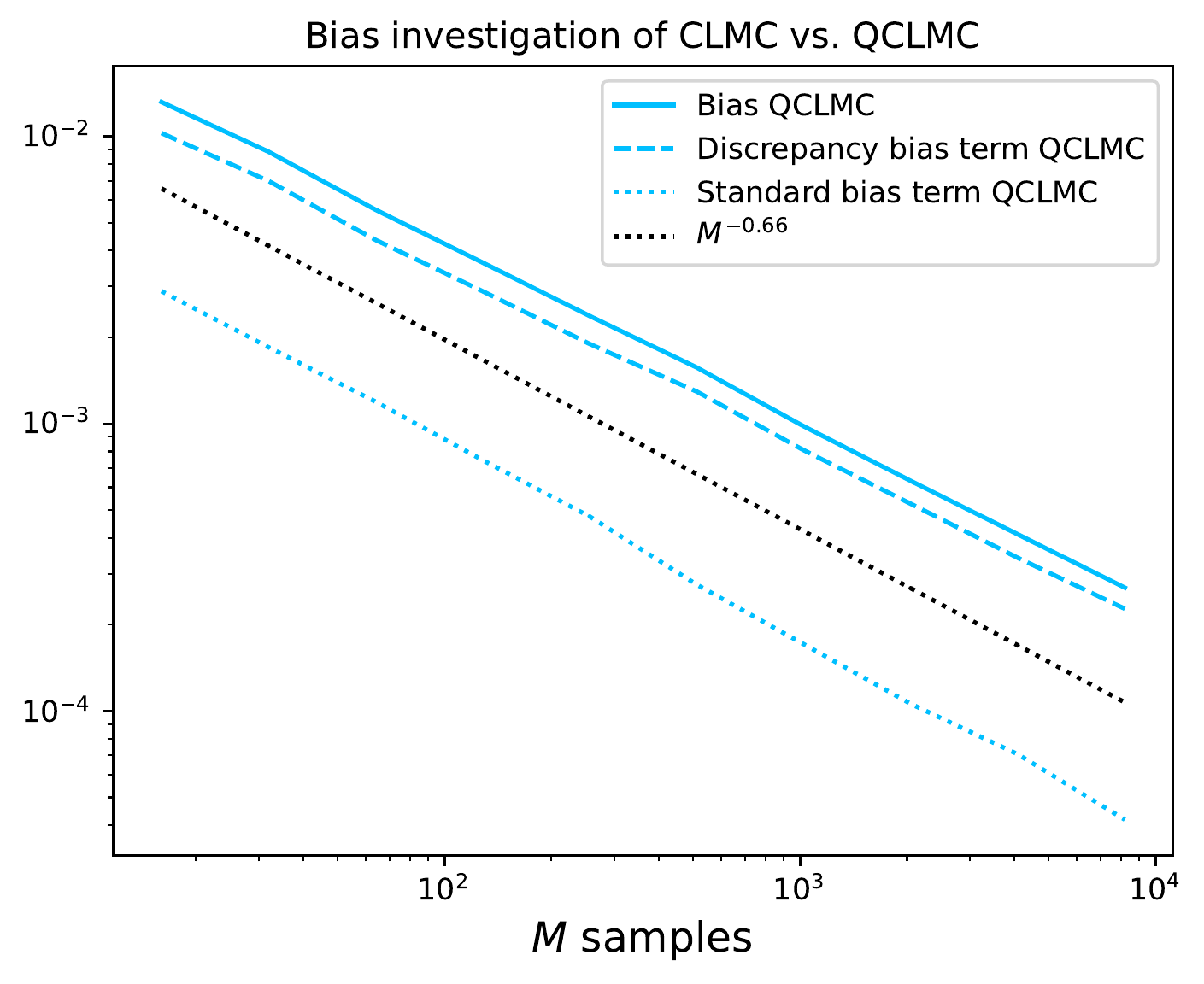}
    \includegraphics[width=0.32\textwidth]{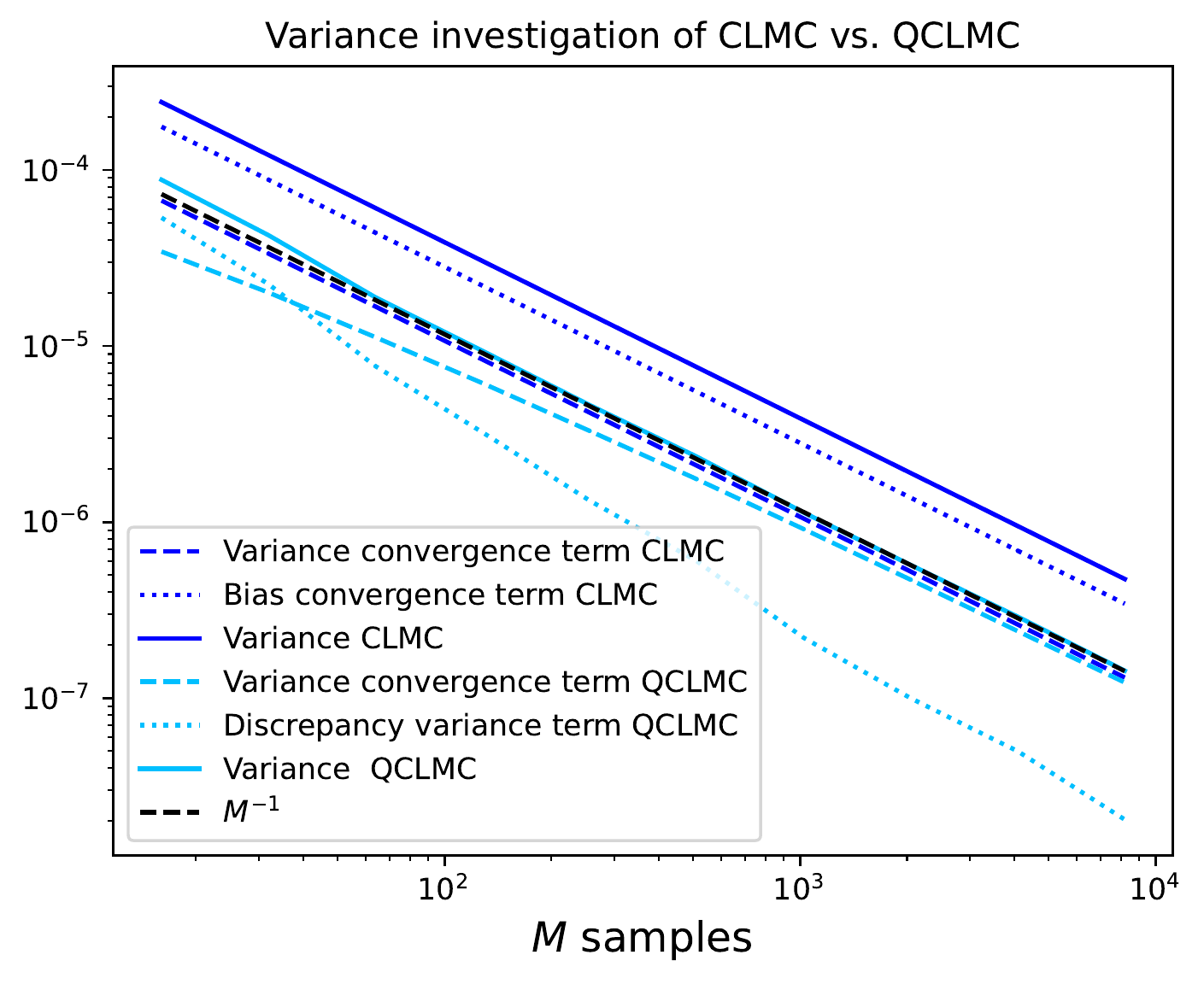}
    \includegraphics[width=0.32\textwidth]{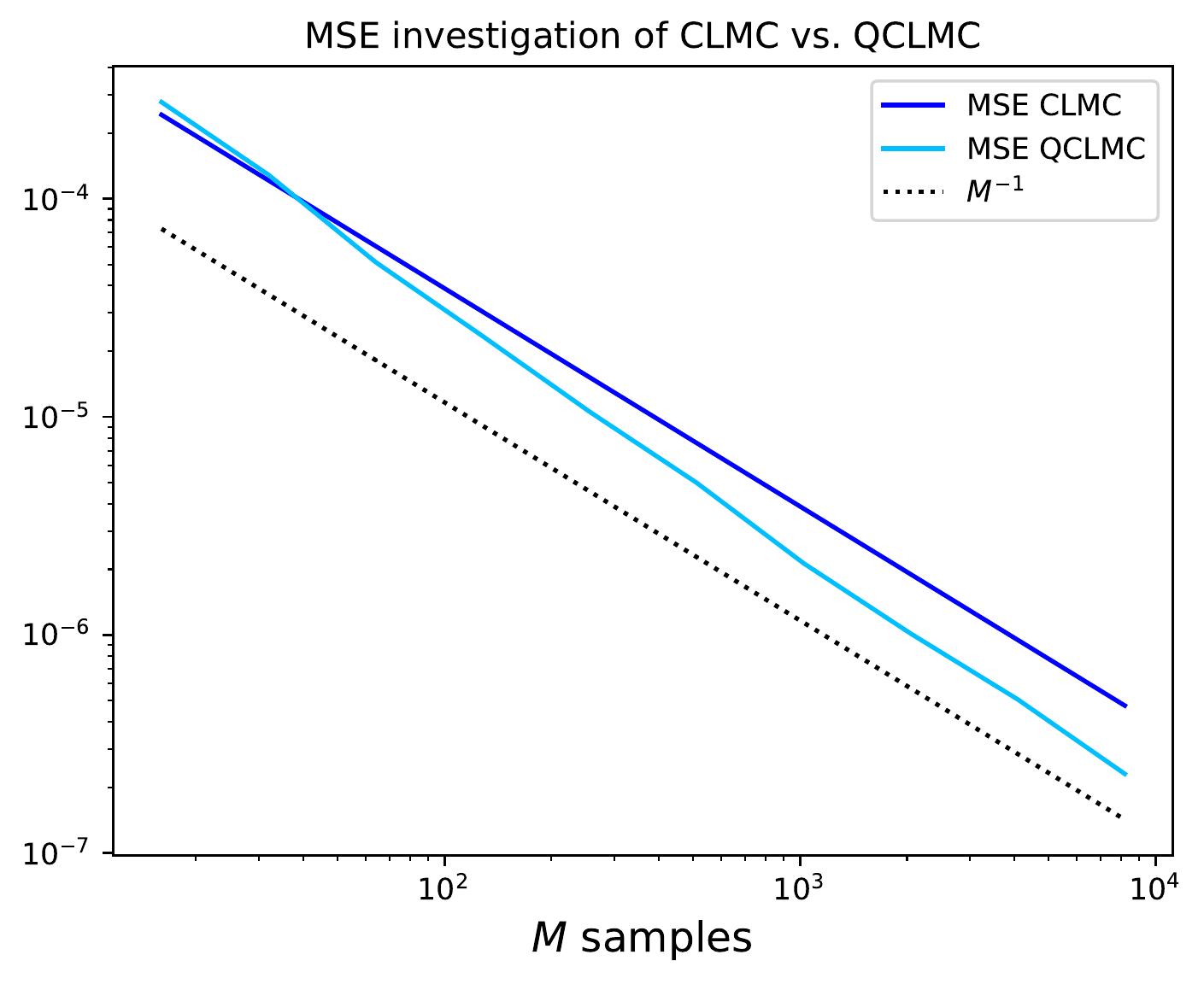}
\caption{Upper bounds to the bias (left), variance (middle) and MSE (right) for CLMC and QCLMC estimated over $100$ independent runs. Hyperparameters for log-Gauss field \eqref{eq:matern_kernel}: $\nu=1.5$, $\lambda = 0.2$, $v=0.5$.}
\label{fig:variance_and_MSE_CLMC_QCLMC_2}
\end{figure}

\begin{figure}[tbhp]
      \includegraphics[width=0.32\textwidth]{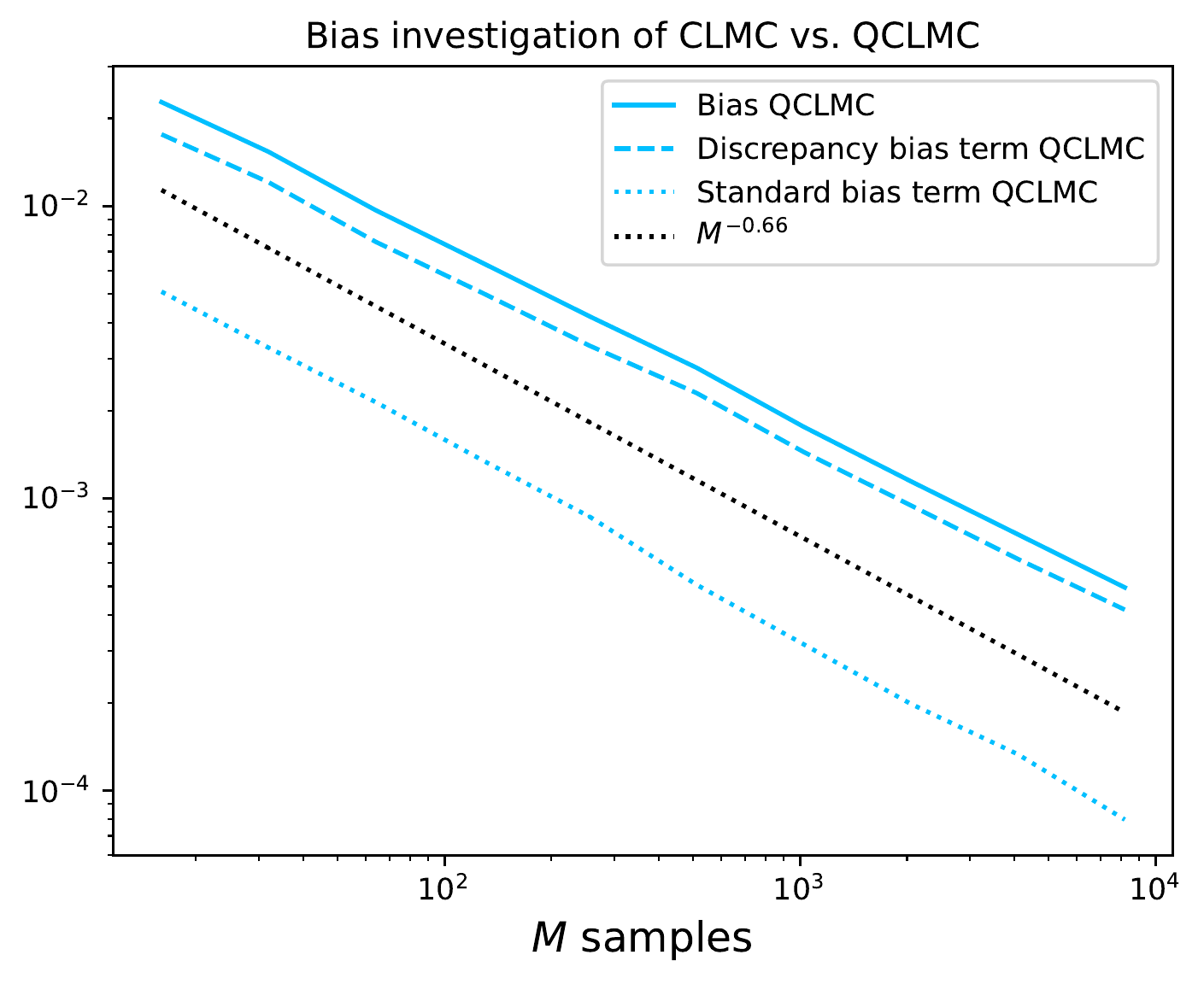}
    \includegraphics[width=0.32\textwidth]{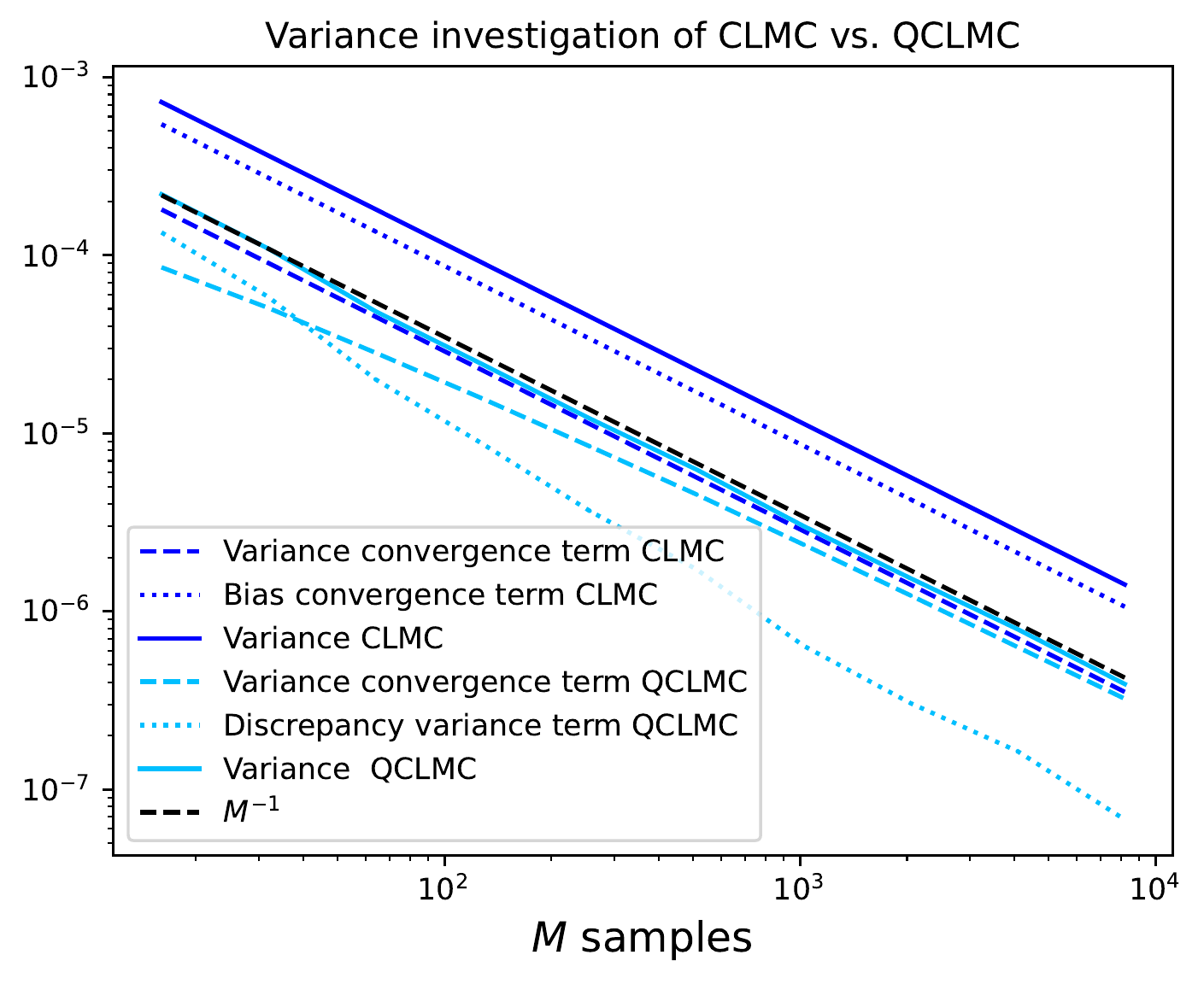}
    \includegraphics[width=0.32\textwidth]{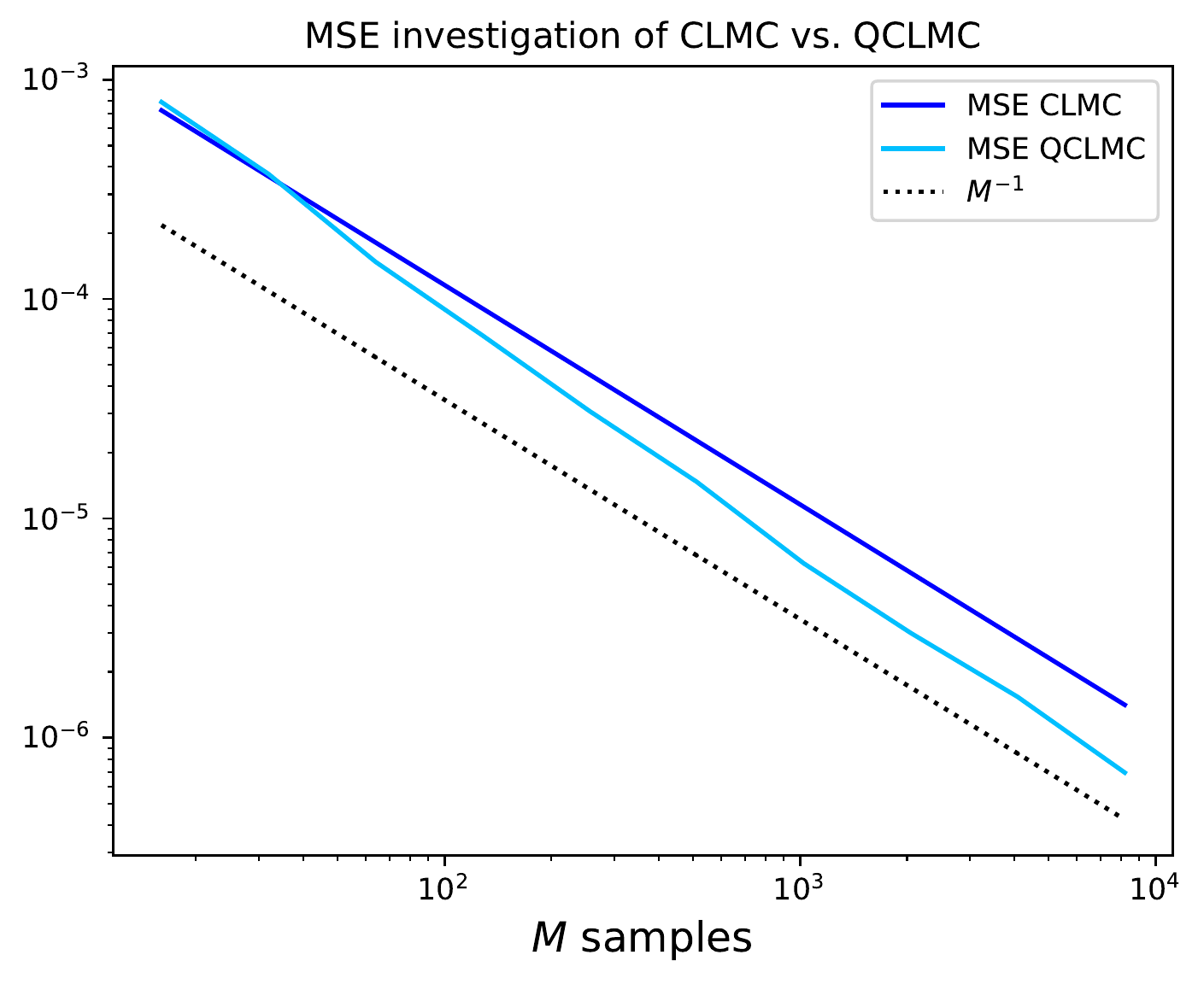}
\caption{Upper bounds to the bias (left), variance (middle) and MSE (right) for CLMC and QCLMC estimated over $100$ independent runs. Hyperparameters for log-Gauss field \eqref{eq:matern_kernel}: $\nu=1.5$, $\lambda = 0.1$, $v=1$.}
\label{fig:bias_variance_and_MSE_CLMC_QCLMC_3}
\end{figure}

\subsection{Performance comparison of CLMC and QCLMC}
\label{subsec:performance_comparison_of_clmc_and_qclmc}
With the practical estimator defined in Section \ref{subsec:pratical_estimator_a_posteriori_error_and_parameter_estimates} at hand, an algorithm for CLMC and QCLMC to compare their real performance is formulated in Algorithm \ref{alg:qclmc}. The algorithm is defined for $\Lm = \infty$ and $\ga < \min\{\be, 2\al\}$.
\begin{algorithm}
\caption{(Q)CLMC}
\label{alg:qclmc}
\begin{algorithmic}
\REQUIRE $r \in (\ga, \min\{\be, 2 \al\})$ the exponential distribution parameter, $M \in \N$ the total number of samples
\FOR{$k = 1:M$}
 \STATE Draw and save sample $L_r^\k \sim \Exp(r)$
 \STATE $j \gets -1$
 \STATE $\l_{tmp} \gets 0$
 \WHILE{$\l_{tmp} \leq L_r^\k$}
        \STATE $j \gets j+1$
        \STATE Evaluate and save sample $Q_j^\k$
        \STATE Evaluate a-posteriori error estimate $e_j^\k$ of $|\calQ^\k - Q_j^\k|$
        \STATE Compute and save level $\l_j^\k = -\ln(e_j^\k / e_0^\k)$
        \STATE $\l_{tmp} \gets \l_j^\k$
 \ENDWHILE
 \STATE Save $J^\k \gets j$
 \STATE Save $\tilde{\l}_j^\k = \min\{L_r^\k, \l_j^\k\}$
 \ENDFOR
\STATE $\widehat{Q}_{0,\infty}^\QCLMC = \frac{1}{M} \sum_{k=1}^M \sum_{j=1}^{J^\k} \frac{\exp(r \tilde{\l}_j^\k) - \exp(r \l_{j-1}^\k)}{r\left(\l_{j}^\k - \l_{j-1}^\k\right)} \left(Q_j^\k - Q_{j-1}^\k\right)$
 \end{algorithmic}
 Note, that for CLMC the samples $(L_r^\k;\;k=1,...,M)$ are drawn with a pseudo-random number generator, e.g., in Python with \verb+numpy.random+ \cite{Numpy2020} and for QCLMC with a quasi-random number generator and the inverse transformation from Remark \ref{rem:F_discrepancy_exponential}, e.g., in Python with \verb+scipy.qmc+ \cite{Scipy2020}, where independent sequences may be generated by Owen scambling, cf.~\cite{Owen1995_Scrambling, Owen1998_Scrambling}.
\end{algorithm}
This algorithm is used to evaluate the performance of CLMC and QCLMC for a sequence of sample sizes $M_i = 16 \cdot 2^i$ for $i=0,1,\dots,9$. We compute $K = 100$  independent runs for each of the sample sizes and each method and estimate the respective achieved MSE by
\begin{equation*}
 \MSE^{\text{(Q)CLMC}} = \E\left[\left(\widehat{Q}_{0,\infty}^{\text{(Q)CLMC}} - \E[\calQ - Q(0)]\right)^2\right] \approx \frac{1}{K}\sum_{k=1}^K \left(\left(\widehat{Q}_{0,\infty}^{\text{(Q)CLMC}}\right)^\k - \widehat{Q}_{ref} \right)^2,
\end{equation*}
where the reference solution $\widehat{Q}_{ref} \approx \E[\calQ - Q(0)]$ is computed by an optimized MLMC algorithm to a very small tolerance, cf.~\cite{Giles2008_MLMCPath, BeschleBarth2023_QCLMC}. The convergence results are given in Figure \ref{fig:MSE_convergence_CLMC_QCLMC}, where the $95\%$ confidence intervals are computed via the central limit theorem. We observe that both methods achieve their expected cost (measured in sample sizes) to MSE convergence rate of $-1$. As already indicated by the previous experiments, we observe a significant improvement of the MSE for the QCLMC method in comparison to the CLMC method for the same number of samples $M$. In contrast to the evaluated upper bounds to the MSE in Section \ref{subsec:comparison_of_upper_bounds_to_the_mse}, where the improvement of the upper bounds occurs only for larger values of $M$, the real estimated MSE for QCLMC is significantly reduced compared to CLMC right from the start. Comparing the MSE curves for CLMC and QCLMC in Figure \ref{fig:MSE_convergence_CLMC_QCLMC} (left) to the MSE upper bounds given in Figure \ref{fig:variance_and_MSE_CLMC_QCLMC} (right) we observe that the upper bound to the MSE for CLMC is tighter than the one for QCLMC, which gets tight for larger values of $M$. We observe the same when comparing Figure \ref{fig:MSE_convergence_CLMC_QCLMC} (right) to Figure \ref{fig:bias_variance_and_MSE_CLMC_QCLMC_3} (right). 
In Tables \ref{tab:mse_values_1} and \ref{tab:mse_values_2} the corresponding achieved MSE values are given for each method and each sample size, together with the quotient of improvement by the QCLMC method. The average quotient of improvement for the values in Table \ref{tab:mse_values_1} is about $5.6$ and the quotient between the constants is $\frac{c_1^2}{c_2} \approx 6.6$. The average quotient of improvement for the values in Table \ref{tab:mse_values_2} is about $4$ and the quotient between the constants is $\frac{c_1^2}{c_2} \approx 4.2$.

We conclude that QCLMC always significantly outperforms CLMC for the provided numerical examples and the factor by which the MSE is improved may be related to the ratio $\frac{c_1^2}{c_2}$. Overall, by looking at the upper bounds to the bias and variance, see Equations \eqref{eq:bias_QCLMC} and \eqref{eq:var_QCLMC} for QCLMC and Equation \eqref{eq:var_CLMC_infty} for CLMC, and the conducted numerical experiments in this work, it is reasonable to expect a similar performance of both methods in case $\frac{c_1^2}{c_2} \leq 1$, because the error contributions by terms including $c_2$ converge at a rate $M^{-1}$ for both methods. More importantly, we expect that QCLMC outperforms CLMC in cases where $\frac{c_1^2}{c_2} > 1$, because the error contributions by terms including $c_1^2$ converge faster than $M^{-1}$ for QCLMC. Due to the above mentioned advantages and essentially the same involved effort in the implementation, we generally recommend to use QCLMC over CLMC.

% This if course depends on a reasonable choice of $r$ and the value of $c_{disc}$. It holds $c_{disc} = 1$ for a one-dimensional Sobol sequence of length $2^k$ with $k \in \N$.
%

\begin{figure}[tbhp]
    \includegraphics[width=0.49\textwidth]{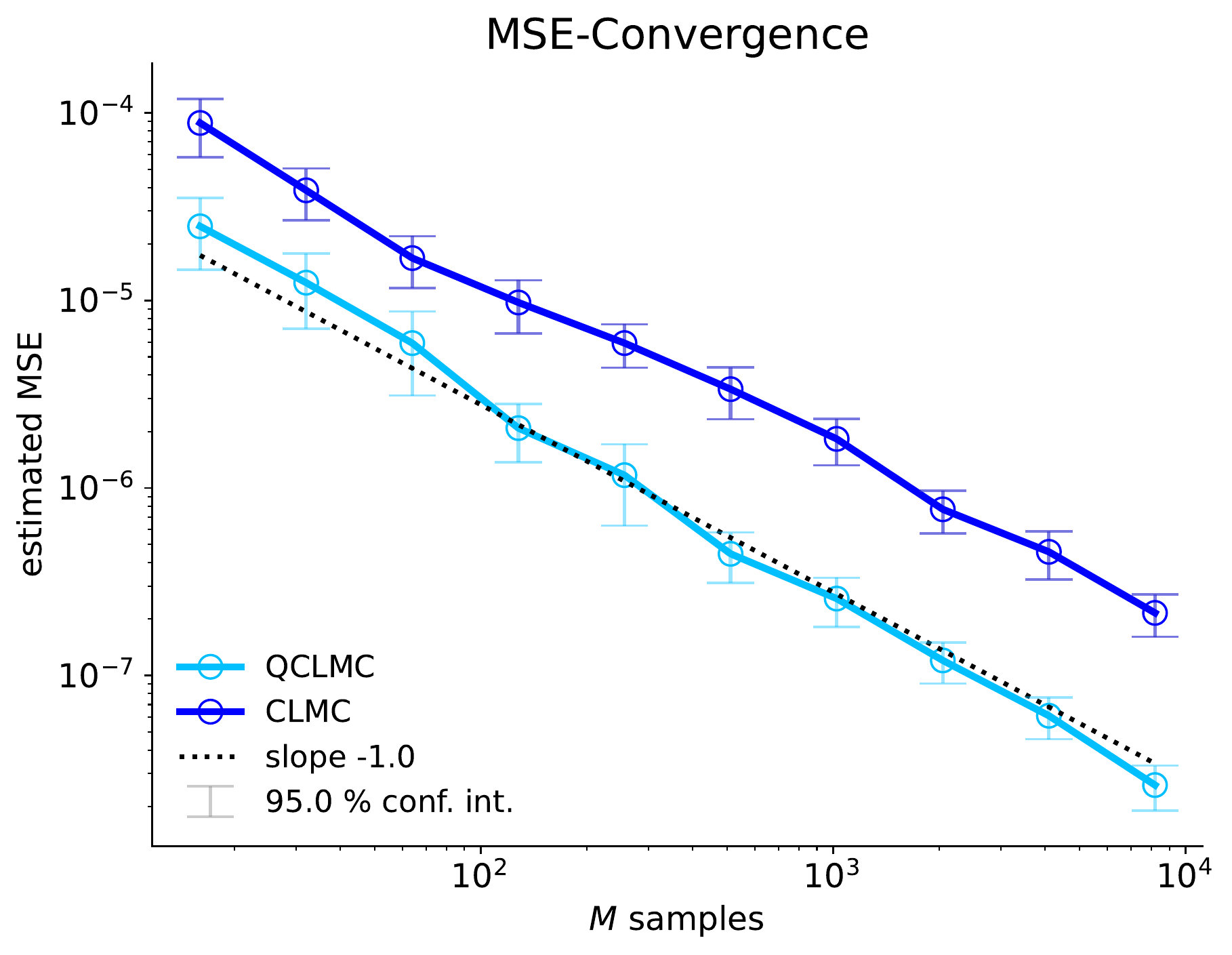}
    \includegraphics[width=0.49\textwidth]{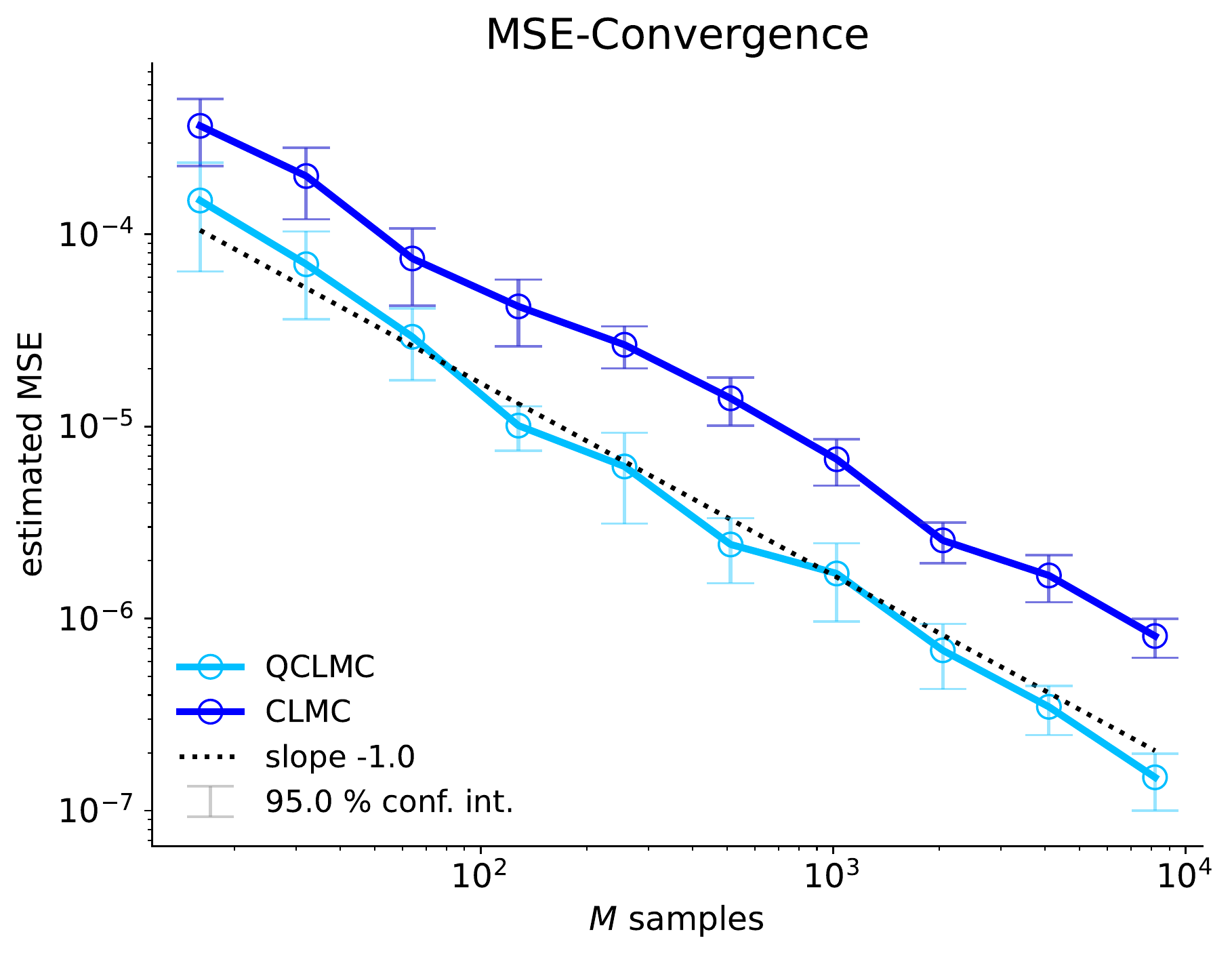}
\caption{Estimated MSE (y-axis) for CLMC and QCLMC over $100$ independent runs for sample sizes $M_i = 16 \cdot 2^i$ for $i=0,\dots,9$ (x-axis). Hyperparameters for log-Gauss field \eqref{eq:matern_kernel}: $\nu=1$, $\lambda = 0.1$, $v=0.5$ (left) and $\nu=1.5$, $\lambda = 0.1$, $v=1$ (right).}
\label{fig:MSE_convergence_CLMC_QCLMC}
\end{figure}

\begin{table}[tbhp]
\begin{tabular}{|l|l|l|l|l|l|l|l|l|l|l|}
\hline
   method / $M$      & $16$ & $32$  & $64$ & $128$ & $256$ & $512$ & $1024$ & $2048$ & $4096$ & $8192$  \\ \hline
$\MSE^\CLMC$  & 8.8e-05 & 3.7e-05 &1.7e-05 & 9.7e-06 & 5.9e-06   & 3.4e-06 & 1.8e-06 & 7.7e-07& 4.5e-07& 2.1e-07 \\ \hline
 $\MSE^\QCLMC$ & 2.5e-05 & 1.2e-05 & 5.9e-06 & 2.1e-06 & 1.2e-06   & 4.5e-07 & 2.5e-07 & 1.2e-07& 6.1e-08& 2.6e0-8 \\ \hline
 $\frac{\MSE^\CLMC}{\MSE^\QCLMC}$ & $3.6$ & $3.1$ & $2.8$ & $4.6$ & $5.1$ & $7.5$ & $7.1$ & $6.5$ & $7.5$ & $8.2$ \\ \hline
\end{tabular}
\caption{Estimated MSE values for CLMC and QCLMC for different values of $M$ together with their quotient, to be able to compare the performances. Hyperparameters for log-Gauss field \eqref{eq:matern_kernel}: $\nu=1$, $\lambda = 0.1$, $v=0.5$.}
\label{tab:mse_values_1}
\end{table}

\begin{table}[tbhp]
\begin{tabular}{|l|l|l|l|l|l|l|l|l|l|l|}
\hline
   method / $M$      & $16$ & $32$  & $64$ & $128$ & $256$ & $512$ & $1024$ & $2048$ & $4096$ & $8192$  \\ \hline
$\MSE^\CLMC$  & 3.7e-04 & 2.0e-04 &7.5e-05 & 4.2e-05 & 2.7e-05   & 1.4e-05 & 6.8e-06 & 2.6e-06& 1.7e-06& 8.1e-07 \\ \hline
 $\MSE^\QCLMC$ & 1.5e-04 & 7.0e-05 & 2.9e-05 & 1.0e-05 & 6.2e-06   & 2.4e-06 & 1.7e-06 & 6.8e-07& 3.4e-07& 1.5e0-7 \\ \hline
 $\frac{\MSE^\CLMC}{\MSE^\QCLMC}$ & $2.4$ & $2.9$ & $2.6$ & $4.2$ & $4.3$ & $5.8$ & $3.9$ & $3.7$ & $4.8$ & $5.4$ \\ \hline
\end{tabular}
\caption{Estimated MSE values for CLMC and QCLMC for different values of $M$ together with their quotient, to be able to compare the performances. Hyperparameters for log-Gauss field \eqref{eq:matern_kernel}: $\nu=1.5$, $\lambda = 0.1$, $v=1$.}
\label{tab:mse_values_2}
\end{table}

%

%%-----------------------------
%%      your bibliography
%%-----------------------------

\bibliographystyle{abbrv}
\bibliography{arxiv_ESAIMM2ANPaper_references.bib}
%

%%-----------------------------
%%      appendix
%%-----------------------------

\appendix
\section{Integral computations}
\label{app:appendix}
Here, we show the bounds for the integral terms $I $\eqref{eq:integral_I} and $II$ \eqref{eq:integral_II} from the proof of the QCLMC complexity theorem. For $I$ we compute
\begin{equation*}
 \begin{aligned}
    I = & \ \frac{c_2}{M} \int_0^{\Lm} \int_0^{\Lm} \left( \frac{1}{M} \sum_{k=1}^M \mathds{1}_{[0, L_r^\k]} (\max(\l,\l'))  -  e^{-r \max\{\l, \l'\}} \right) e^{(r -\frac{\be}{2}) \l} e^{(r -\frac{\be}{2}) \l'}  \dl \dl' \\
    = & \ \frac{c_2}{M}  \int_0^{\Lm} \left( \frac{1}{M} \sum_{k=1}^M \mathds{1}_{[0, L_r^\k]} (\l')  -  e^{-r \l'} \right) \int_0^{\l'} e^{(r -\frac{\be}{2}) \l} e^{(r -\frac{\be}{2}) \l'}   \dl \dl' \\
    & \ + \frac{c_2}{M}  \int_0^{\Lm} \int_{\l'}^{\Lm} \left( \frac{1}{M} \sum_{k=1}^M \mathds{1}_{[0, L_r^\k]} (\l)  -  e^{-r \l} \right) e^{(r -\frac{\be}{2}) \l} e^{(r -\frac{\be}{2}) \l'}   \dl \dl' \\
    \leq & \ \frac{c_2}{M}  \sup_{\l'>0} \left|\frac{1}{M} \sum_{k=1}^M \mathds{1}_{[0, L_r^\k]} (\l')  -  e^{-r \l'}\right|  \int_0^{\Lm} \int_0^{\l'} e^{(r -\frac{\be}{2}) \l} e^{(r -\frac{\be}{2}) \l'}   \dl \dl' \\
    & \ + \frac{c_2}{M}  \sup_{\l>0} \left|\frac{1}{M} \sum_{k=1}^M \mathds{1}_{[0, L_r^\k]} (\l)  -  e^{-r \l}\right| \int_0^{\Lm} \int_{\l'}^{\Lm} e^{(r -\frac{\be}{2}) \l} e^{(r -\frac{\be}{2}) \l'}  \dl \dl' \\
    \leq & \  \frac{c_{disc} c_2}{M^2} \int_0^{\Lm} \int_0^{\Lm} e^{(r -\frac{\be}{2}) \l} e^{(r -\frac{\be}{2}) \l'}   \dl \dl' \\
    =  & \ \frac{c_{disc} c_2}{M^2} \left(\int_0^{\Lm} e^{(r-\frac{\be}{2})\l} \dl\right)^2 \\
    =  & \ \frac{c_{disc} c_2}{M^2}  \begin{cases}
           \left(\frac{1}{r-\frac{\be}{2}} e^{(r-\frac{\be}{2})\Lm} - \frac{1}{r-\frac{\be}{2}}\right)^2 & \quadfor \be \neq \frac{r}{2} \\
           (\Lm)^2 & \quadfor \be = \frac{r}{2},
          \end{cases}
%  \\
%      = & \ \frac{4 c_2}{(2r - \be)^2} e^{(2r-\be)\Lm} -\frac{4 c_2}{(2r - \be)^2} e^{(r-\frac{\be}{2})\Lm} + \frac{4 c_2}{(2r - \be)^2} \\
%      = & \ \frac{4 c_2}{(2r - \be)^2} \left(e^{(2r-\be)\Lm} - e^{(r-\frac{\be}{2})\Lm} + 1\right).
 \end{aligned}
\end{equation*}
using the Fubini--Tonelli theorem to compute the double integral as the square of the respective single integral. Further, we compute the square in the case $r \neq \frac{\be}{2}$ to obtain
\begin{equation*}
  \begin{aligned}
  \left(\frac{1}{r-\frac{\be}{2}} e^{(r-\frac{\be}{2})\Lm} - \frac{1}{r-\frac{\be}{2}}\right)^2 & \ = \frac{4 }{(2r - \be)^2} e^{(2r-\be)\Lm} -\frac{4 }{(2r - \be)^2} e^{(r-\frac{\be}{2})\Lm} + \frac{4 }{(2r - \be)^2} \\
  & \ = \frac{4 }{(2r - \be)^2} \left(e^{(2r-\be)\Lm} - e^{(r-\frac{\be}{2})\Lm} + 1\right).
  \end{aligned}
\end{equation*}
For $II$, we compute the following double integral using the Fubini--Tonelli theorem:
\begin{equation*}
 \begin{aligned}
 II =   & \  \frac{c_2}{M} \int_0^{\Lm} \int_0^{\Lm} e^{-r \max\{\l, \l'\}} e^{(r -\frac{\be}{2}) \l} e^{(r -\frac{\be}{2}) \l'}  \dl \dl' \\
  =  & \ c_2 \int_0^{\Lm} e^{(r-\frac{\be}{2}) \l} \int_0^{\Lm}  e^{-r \max(\l, \l')}  e^{(r - \frac{\be}{2}) \l'}  \dl' \dl.
 \end{aligned}
\end{equation*}
The inner integral computes in the case $r \neq \frac{\be}{2}$ as
\begin{equation*}
 \begin{aligned}
   \int_0^{\Lm} e^{-r \max(\l, \l')}e^{(r - \frac{\be}{2}) \l'}  \dl' & = e^{-r\l} \int_0^{\l}  e^{(r - \frac{\be}{2}) \l'}  \dl'  + \int_\l^{\Lm} e^{-\frac{\be}{2}\l'} \dl' \\
  & = \frac{1}{r - \frac{\be}{2}} e^{-r\l} \left(e^{(r-\frac{\be}{2}) \l} - 1\right) + \frac{1}{-\frac{\be}{2}} \left(e^{- \frac{\be}{2} \Lm} - e^{- \frac{\be}{2} \l}\right) \\
  & = \frac{1}{r - \frac{\be}{2}} e^{-\frac{\be}{2} \l} - \frac{1}{r - \frac{\be}{2}}e^{-r\l}  +\frac{2}{\be} e^{ - \frac{\be}{2}\l} -  \frac{2}{\be} e^{- \frac{\be}{2} \Lm}  \\
  & =  \frac{2r}{(r - \frac{\be}{2})\be} e^{-\frac{\be}{2} \l} - \frac{1}{r - \frac{\be}{2}}e^{-r\l}   -  \frac{2}{\be} e^{- \frac{\be}{2} \Lm}.
 \end{aligned}
\end{equation*}
Inserting this on top again leaves us to compute $3$ more integrals and a further case distinction, where we start with $r \neq \be$
\begin{equation*}
 \begin{aligned}
  \frac{2r}{(r - \frac{\be}{2})\be} \int_0^{\Lm} & e^{(r-\be) \l} \dl - \frac{1}{r - \frac{\be}{2}} \int_0^{\Lm}  e^{-\frac{\be}{2}\l}  \dl -  \frac{2}{\be} e^{- \frac{\be}{2} \Lm}  \int_0^{\Lm} e^{(r-\frac{\be}{2}) \l} \dl \\
  & =  \frac{2r}{(r - \frac{\be}{2})\be(r-\be)} e^{(r-\be) \Lm} - \frac{2r}{(r - \frac{\be}{2})\be(r-\be)} \\
  & \qquad + \frac{2}{(r - \frac{\be}{2})\be} e^{-\frac{\be}{2}\Lm} - \frac{2}{(r - \frac{\be}{2})\be}  -  \frac{2}{\be(r - \frac{\be}{2})} e^{(r - \be) \Lm} + \frac{2}{\be(r - \frac{\be}{2})} e^{- \frac{\be}{2} \Lm} \\
  & =  \frac{2}{(r-\be) (r - \frac{\be}{2})} e^{(r-\be) \Lm} +  \frac{4}{\be(r - \frac{\be}{2})} e^{- \frac{\be}{2} \Lm} -  \frac{4}{(r-\be) \be},
 \end{aligned}
\end{equation*}
and for the case $r = \be$ we compute
\begin{equation*}
 \begin{aligned}
   \frac{2r}{(r - \frac{\be}{2})\be} \int_0^{\Lm} & e^{(r-\be) \l} \dl - \frac{1}{r - \frac{\be}{2}} \int_0^{\Lm}  e^{-\frac{\be}{2}\l}  \dl -  \frac{2}{\be} e^{- \frac{\be}{2} \Lm}  \int_0^{\Lm} e^{(r-\frac{\be}{2}) \l} \dl\\
  & =  \frac{4}{\be} \Lm + \frac{4}{\be^2} e^{-\frac{\be}{2}\Lm} - \frac{4}{\be^2} -  \frac{4}{\be^2} + \frac{4}{\be^2} e^{- \frac{\be}{2} \Lm} = \frac{4}{\be} \Lm + \frac{8}{\be^2} e^{-\frac{\be}{2}\Lm}-  \frac{8}{\be^2}.
 \end{aligned}
\end{equation*}
For the special case $r = \frac{\be}{2}$ we obtain
\begin{equation*}
 \begin{aligned}
   \int_0^{\Lm} & e^{-\frac{\be}{2} \max(\l, \l')} \dl'  = e^{-\frac{\be}{2}\l} \int_0^{\l} \dl'  + \int_\l^{\Lm} e^{-\frac{\be}{2}\l'} \dl' \\
  & =  \l \, e^{-\frac{\be}{2}\l}  + \frac{1}{-\frac{\be}{2}} \left(e^{- \frac{\be}{2} \Lm} - e^{- \frac{\be}{2} \l}\right)  = \l \, e^{-\frac{\be}{2}\l}  +\frac{2}{\be} e^{ - \frac{\be}{2}\l} -  \frac{2}{\be} e^{- \frac{\be}{2} \Lm}, 
 \end{aligned}
\end{equation*}
and inserting this on top leads to
\begin{equation*}
 \begin{aligned}
   \int_0^{\Lm} & \l \, e^{-\frac{\be}{2}\l} \dl + \frac{2}{\be}\int_0^{\Lm}   e^{ - \frac{\be}{2}\l}\dl -  \frac{2}{\be} e^{- \frac{\be}{2} \Lm}  \int_0^{\Lm} \dl \\
  & = -\frac{4}{\be^2} e^{-\frac{\be}{2} \Lm} - \frac{2}{\be} \Lm e^{-\frac{\be}{2} \Lm} + \frac{4}{\be^2} - \frac{4}{\be^2}e^{-\frac{\be}{2} \Lm} + \frac{4}{\be^2} - \frac{2}{\be} \Lm e^{- \frac{\be}{2} \Lm} \\
  & = -\frac{8}{\be^2} e^{-\frac{\be}{2} \Lm} - \frac{4}{\be} \Lm e^{- \frac{\be}{2} \Lm} + \frac{8}{\be^2}.
 \end{aligned}
\end{equation*}
Overall, we obtain the bounds
\begin{equation*}
  I  \leq  \frac{c_{disc}\, c_2}{M^2} \begin{cases}
      \left(\frac{4}{(2r - \be)^2} \left(e^{(2r-\be)\Lm} - e^{(r-\frac{\be}{2})\Lm} + 1\right)\right) &\quadfor r \neq \frac{\be}{2}, \\
      (\Lm)^2 &\quadfor r = \frac{\be}{2},
  \end{cases}
 \label{eq:qclmc_var_int_I}
\end{equation*}
and
\begin{equation*}
  II  \leq \frac{c_2}{M} \begin{cases}
                      \left(\frac{2}{(r-\be) (r - \frac{\be}{2})} e^{(r-\be) \Lm} +  \frac{4}{\be(r - \frac{\be}{2})} e^{- \frac{\be}{2} \Lm} -  \frac{4}{(r-\be) \be} \right)   & \quadfor r \neq \frac{\be}{2}, \be, \\
                        -\frac{8}{\be^2} e^{-\frac{\be}{2} \Lm} - \frac{4}{\be} \Lm e^{- \frac{\be}{2} \Lm} + \frac{8}{\be^2} & \quadfor r = \frac{\be}{2}, \\
                       \frac{4}{\be} \Lm + \frac{8}{\be^2} e^{-\frac{\be}{2}\Lm} - \frac{8}{\be^2} & \quadfor r = \be.
                     \end{cases}
  \label{eq:qclmc_var_int_II}
\end{equation*}
\end{document}